\documentclass{amsart}
\usepackage{standalone}
\usepackage{graphicx}
\usepackage{amssymb, amsfonts, amsmath, amsthm,mathabx}
\usepackage[all]{xy}
\usepackage{amsmath}
\usepackage{verbatim}
\usepackage[switch, modulo]{lineno}
\usepackage{hyperref}
\usepackage{tikz-cd}
\usepackage{graphicx}
\DeclareGraphicsRule{.tif}{png}{.png}{`convert #1 `dirname #1`/`basename #1
.tif`.png}

\graphicspath{{../images/}}

\textwidth = 6.5 in
\textheight = 8.5 in
\oddsidemargin = 0.0 in
\evensidemargin = 0.0 in
\topmargin = -0in
\headheight = 0. in
\headsep = 0.5 in
\parskip = 0.0in
\parindent = 0.25in
\hoffset = 0 in

%\addbibresource{bibliography.bib}

\newcommand{\abs}[1]{\left| #1 \right|}

\newcommand{\varep}{\varepsilon}
\newcommand{\B}{\mathcal{B}}

\newcommand{\Sp}{\mathbb{S}}

\newcommand{\R}{\mathbb{R}}
\newcommand{\RP}{\mathbb{RP}}

\newcommand{\tr}{{\rm tr}}

\newcommand{\C}{\mathbb{C}}

\newcommand{\GL}{\operatorname{GL}}
\newcommand{\SL}{\operatorname{SL}}
\newcommand{\SLpm}{\operatorname{SL}_{\pm}}
\newcommand{\PSL}{\operatorname{PSL}}
\newcommand{\SO}{\operatorname{SO}}
\newcommand{\so}{\mathfrak{so}}

\newcommand{\g}{\mathfrak{g}}
\newcommand{\Z}{\mathbb{Z}}

\newcommand{\HH}{\mathbb{H}}

\newcommand{\Sc}{\mathcal{S}}
\newcommand{\Sca}{{\Sc_a}}

\newcommand{\dev}{\mathsf{dev}}
\newcommand{\hol}{\mathsf{hol}}
\renewcommand{\hom}{\operatorname{Hom}}
\newcommand{\rep}{\operatorname{Rep}}

\newcommand{\bs}{\backslash}

\newcommand{\CS}{\mathsf{CS}}
\newcommand{\ad}{\mathsf{ad}}
\newcommand{\Ad}{\mathsf{Ad}}
\renewcommand{\arg}{\mathsf{Arg}}
\newcommand{\rhohyp}{\rho_{hyp}}
\newcommand{\res}{\mathsf{res}}
\renewcommand{\sl}{\mathfrak{sl}}
\renewcommand{\v}{\mathfrak{v}}
\newcommand{\Scusp}{\mathcal{C}}

\newcommand{\m}{\mathfrak{m}}

\newtheorem{theorem}{Theorem}

\newtheorem{proposition}[theorem]{Proposition}
\newtheorem{corollary}[theorem]{Corollary}
\newtheorem{lemma}[theorem]{Lemma}
\newtheorem{remark}[theorem]{Remark}
\newtheorem{definition}{Definition}

\numberwithin{equation}{section}
\numberwithin{theorem}{section}

%\linenumbers

\begin{document}

\title[Constructing convex projective 3-manifolds with generalized cusps]{Constructing convex projective 3-manifolds with generalized cusps}
\author{Samuel A. Ballas}
\email{ballas@math.fsu.edu}
\date{\today}
\address{Department of Mathematics\\ 
Florida State University\\ Tallahassee, FL 32306, USA}

\maketitle

\begin{abstract}
 We prove that non-compact finite volume hyperbolic 3-manifolds that satisfy a mild cohomological condition (infinitesimal rigidity) admit a family of properly convex deformations of their complete hyperbolic structure where the ends become generalized cusps of type 1 or type 2. We also discuss methods for controlling which types of cusps occur. Using these methods we produce the first known example of a 1-cusped hyperbolic 3-manifold that admits a convex projective structure with a type 2 cusp. We also use these techniques to produce new 1-cusped manifolds that admit a convex projective structure with a type 1 cusp.

\end{abstract}

Unless stated otherwise, all manifolds in this paper are orientable. A subset $\Omega$ of the projective sphere, $\Sp^n$, is \emph{properly convex} if it is a bounded convex subset of some affine subspace of $\Sp^n$. A \emph{properly convex manifold} is a quotient $\Omega/\Gamma$, where $\Omega$ is properly convex and $\Gamma$ is a discrete, torsion-free subgroup of $\SL(n+1,\R)$ that preserves $\Omega$. An important example of a properly convex set is the Klein model of $n$-dimensional hyperbolic space. As a result, complete hyperbolic manifolds provide a broad and important class of properly convex manifolds. 

Suppose $M$ is an $n$-manifold, a \emph{(marked) convex projective structure on $M$} is a pair $(f,N)$, where $N$ is a properly convex manifold and $f:M\to N$ is a diffeomorphism. There is a natural equivalence relation on convex projective structures and the \emph{deformation space} of convex projective structures on $M$, denoted $\mathfrak{B}(M)$, is the set of equivalence classes of convex projective structures. When $M$ is a finite volume hyperbolic manifold and $n\geq 3$ Mostow rigidity implies that there is a distinguished base point in $\mathfrak{B}(M)$ coming from the equivalence class of the complete hyperbolic structure on $M$. A primary focus of this work is to understand the possible geometry of points in a neighborhood of this basepoint.   

We now restrict our discussion to dimension 3. Unlike the hyperbolic setting which is extremely rigid, it is sometimes possible to produce a variety of interesting deformations in the properly convex setting. However, there are some (loose) similarities to the hyperbolic setting. In practice, convex projective structures on \emph{closed} manifold tend to be quite rigid. In \cite{CoLoThist1} Cooper--Long--Thistlethwaite analyzed several thousand 3-manifolds with two-generator fundamental group and found that a vast majority ($>90\%$) do not admit any properly convex deformations of their hyperbolic structure (i.e.\ the hyperbolic structure is an isolated point of $\mathfrak{B}(M)$). However, they also found a small number of examples that admit positive dimensional families of deformations of their complete hyperbolic structure (see \cite{CoLoThist2}). There are also other isolated examples of closed 3-manifolds whose complete hyperbolic structure can be deformed (see \cite{HePo,Benoist-survey,Bal2bridge,ChoiHodLee,ChoiRadial}, for example).

There are also similarities between the deformation theory of hyperbolic and convex projective structures when $M$ is non-compact, but has finite volume. In both settings it is possible to find deformations that are ``supported near the boundary.'' In the hyperbolic setting, it is well known (see \cite{Thu}) that a $k$-cusped hyperbolic manifold admits a (real) $2k$-dimensional family of deformations of its complete hyperbolic structure. However, these deformations only give rise to \emph{incomplete} hyperbolic structures. Loosely speaking, this is a consequence of there not being any way to deform the cusps of $M$ in the category of hyperbolic geometry without losing completeness. 

However in the context of properly convex geometry, \emph{generalized cusps} provided many interesting ways to deform the cusps of a hyperbolic 3-manifold while preserving completeness (with respect to an appropriate metric). Generalized cusps (see Section \ref{gencusp} for precise definitions) are best thought of as properly convex generalizations of cusps of finite volume hyperbolic manifolds. They were first introduced by Cooper--Long--Tillmann \cite{CoLoTiII} and were recently classified by the author, D.\ Cooper, and A.\ Leitner in \cite{BCL}. In dimension 3 generalized cusps come in 4 different flavors (type 0, type 1, type 2, and type 3), where the types interpolate between the holonomy of their fundamental group being unipotent (type 0) and diagonalizable (type 3). The main result of this paper is that when $M$ is \emph{infinitesimally rigid rel.\ $\partial M$} (see Section \ref{infdefs} for definition) it is always possible to find a convex projective structure on $M$ whose ends are all of type 1 or type 2. 

\begin{theorem}\label{mainthm}
	Let $M$ be a finite volume, non-compact hyperbolic 3-manifold. Suppose that $M$ is infinitesimally rigid rel. $\partial M$ then there is a convex projective structures on $M$ where each end is a generalized cusp of type 1 or type 2.
\end{theorem}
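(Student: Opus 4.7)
The plan is to deduce Theorem \ref{mainthm} from Theorem \ref{mainthm2} by a dimension-counting argument: inside the $k$-dimensional family produced by Theorem \ref{mainthm2}, I will show that the locus of structures with at least one type 0 cusp is a proper subset, and hence any structure in its complement has every cusp of type 1 or type 2.

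First I would invoke Theorem \ref{mainthm2} to obtain a $k$-dimensional family $U \subset \mathfrak{B}(M)$ of deformations of the complete hyperbolic structure, containing the hyperbolic basepoint and consisting of structures whose ends are generalized cusps of type 0, 1, or 2. One expects (and will have to verify; see below) that this family is naturally parametrized so that at the basepoint the $k$ tangent directions correspond to $k$ independent infinitesimal deformations of the $k$ cusp holonomies.

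Next I would analyze the type 0 condition algebraically. By the classification of \cite{BCL}, type 0 is precisely the case in which the holonomy of the cusp subgroup is unipotent, i.e.\ every holonomy eigenvalue equals $1$. This is a Zariski-closed condition on the representation variety, defined by the vanishing of certain trace polynomials and eigenvalue relations, so for each cusp $C_i$ ($i = 1, \ldots, k$) the locus $Z_i \subset U$ of structures for which $C_i$ is of type 0 is a real-algebraic subvariety of $U$. Provided that the projection of $U$ onto the space of cusp holonomies at the $i$-th cusp is nonconstant in at least one parameter, $Z_i$ has positive codimension in $U$.

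Finally, the ``bad'' set $\bigcup_{i=1}^k Z_i$ is then a finite union of proper real-algebraic subvarieties of the $k$-dimensional connected family $U$, hence is a proper subset; any $p \in U \setminus \bigcup_i Z_i$ gives a convex projective structure on $M$ with every cusp of type 1 or type 2. The main obstacle in carrying this out is precisely the verification of nonconstancy alluded to above: one must show that the $k$-dimensional family from Theorem \ref{mainthm2} is not ``wasted'' on deformations that fix some cusp's holonomy in unipotent form. In other words, one needs to revisit the construction producing $U$ and argue that at the hyperbolic basepoint the differential of the map $U \to \prod_{i=1}^k (\text{cusp deformation space of } C_i)$ has image of rank $k$, so that each of the $k$ cusps is genuinely and independently deformed away from unipotent holonomy. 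This is the essential content that must be extracted from the proof of Theorem \ref{mainthm2}, and I expect it to follow from the infinitesimal rigidity assumption combined with the way cusp deformation parameters enter the construction.
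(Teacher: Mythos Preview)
Your outline is sound and the overall strategy would work, but the approach differs from the paper's and the gap you identify is exactly the nontrivial input.

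The paper does not argue by dimension counting. Instead it invokes a lemma imported from \cite{BDL} (Lemma~\ref{eachcuspnontriv} here): there exists a single cohomology class $[w]\in H^1_{\rhohyp}(\Gamma,\v)$ whose restriction $(\res_i)_\ast[w]$ is nontrivial for \emph{every} cusp $i$. Via Lemma~\ref{translemma} this lifts to a class $[z]\in V$ whose slice coordinates $(c_{\gamma_1^i},c_{\gamma_2^i})$ are not both zero at each cusp, and then Theorem~\ref{slicecoordtyperelationship}(2) directly yields a structure with all cusps of type 1 or 2. So the paper finds one good direction in $V$ rather than arguing that the bad locus is a proper subvariety.

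Your route would also work once the gap is filled, and the missing ingredient is the same lemma (or a weaker variant: for each $i$ separately, $(\res_i)_\ast$ is nonzero on $H^1_{\rhohyp}(\Gamma,\v)$). This is \emph{not} automatic from the dimension statements in Lemmas~\ref{so31cohomology} and~\ref{vcohomology}: a $k$-dimensional subspace of $\bigoplus_{i=1}^k H^1_{\rhohyp}(\Delta_i,\v)\cong(\R^2)^k$ can certainly project trivially onto one factor. So the ``nonconstancy'' you hope to extract from the construction of $U$ does not follow formally from infinitesimal rigidity; it genuinely requires the cited lemma from \cite{BDL}.

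One imprecision worth flagging: the condition you state, that the $i$th cusp holonomy be ``nonconstant in at least one parameter,'' is not sufficient for $Z_i$ to be proper. The holonomy can vary within the type~0 locus $\Scusp_i$ (the cusp shape changes while remaining unipotent). What you actually need is that the $\v$-component of the restriction, equivalently the pair $(a_i,b_i)$ of slice parameters, is nonconstant; this is exactly the nonvanishing of $(\res_i)_\ast$ on $H^1_{\rhohyp}(\Gamma,\v)$.

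What your approach buys, once completed, is the stronger conclusion that a \emph{generic} structure in $U$ has every cusp of type 1 or 2, not just that one such structure exists.
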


Theorem \ref{mainthm} is a consequence of the more general result, which says that when $M$ is infinitesimally rigid rel.\ $\partial M$ it is always possible to deform the hyperbolic structure in $\mathfrak{B}(M)$, while maintaining some control over the geometry near the boundary. 

\begin{theorem}\label{mainthm2}
	Let $M$ be a finite volume, non-compact hyperbolic 3-manifold with $k\geq 1$ cusps and let $\mathfrak{B}(M)$ be deformation space of convex projective structures on $M$. Suppose that $M$ is infinitesimally rigid rel. $\partial M$ then there is a $k$-dimensional family $U\subset \mathfrak{B}(M)$ containing the complete hyperbolic structure on $M$ and consisting of convex projective structures on $M$ whose ends are generalized cusps of type 0, type 1 or type 2. 
	\end{theorem}

While there are infinitely many hyperbolic 3-manifolds that are not infinitesimally rigid, (for instance if $M$ contains a closed totally geodesic surface), in practice, the hypothesis that $M$ is infinitesimally rigidity rel.\ $\partial M$ is not particularly restrictive. For instance, in \cite{HePo}, Heusener--Porti prove that infinitely many 1-cusped manifolds arising as surgery on the Whitehead link are infinitesimally rigid rel.\ $\partial M$. These examples include infinitely many twist knots and infinitely many once-punctured torus bundles with tunnel number 1. Furthermore, numerical computations performed by the author, J.\ Danciger, and G.-S.\ Lee suggest that a majority of manifolds in the SnapPy cusped census \cite{SnapPy} are infinitesimally rigid rel. $\partial M$. 

The proof of Theorem \ref{mainthm2} uses a transversality argument in the space $\hom(\pi_1M,\SL(4,\R))$. The idea is to construct a submanifold $\Sc$ of representations in $\hom(\pi_1 \partial M,\SL(4,\R))$ whose elements are the holonomy representations of generalized cusps of type 0, type 1, and type 2 (see Section \ref{slicesection} for details). We then show that $\Sc$ has transverse intersection with the image of a certain ``restriction map'' in order to construct representations in $\hom(\pi_1M,\SL(4,\R))$. We then use a version of the Ehresmann--Thurston principle for properly convex structures due to Cooper--Long--Tillmann \cite{CoLoTiII} in order to show that these representations are holonomies of convex projective structures on $M$ with ends that are generalized cusps.  

One application of this theorem is to complete the picture of which generalized cusp types can occur as ends of a convex projective structure on a 1-cusped hyperbolic manifold. Type 0 cusps occur as the ends of finite volume hyperbolic 3-manifolds, and so there are many examples coming from the classical theory of hyperbolic geometry. At the other end of the spectrum the author, along with J.\ Danciger and G.-S.\ Lee (see \cite{BDL}) prove a complementary result which shows that under the same hypothesis as Theorem \ref{mainthm2}, it is possible to find infinite families of convex projective structures on $M$ with type 3 cusps. In particular, it is possible to produce 1 cusped 3-manifolds that admit convex projective structures with type 3 cusps.

However, up to this point there have only been isolated examples of manifolds with type 1 or type 2 cusps. One such example is given  by the author in \cite{BalFig8}, where it is shown that the complement in $S^3$ of the figure-eight knot admits a convex projective structure with a type 1 cusp. Until very recently, there were no known examples of a hyperbolic 3-manifold with type 2 cusps. However, the author was recently made aware of work of M.\ Bobb \cite{Bobb} in which he produces the first examples of hyperbolic 3 manifolds with a cusp of type 2. His methods are quite different than those of this paper and involve simultaneously bending along multiple embedded totally geodesic hypersurfaces. However, he uses arithmetic methods to produce examples with many totally geodesic hypersurfaces, and as a result, the examples he constructs are arithmetic and have many cusps. 

In Section \ref{controlcusps} we analyze the geometry of the ends produced by Theorem \ref{mainthm2}. Using these result we are able to show that the complement in $S^3$ of the $5_2$ knot admits a convex projective structure with a type 2 cusp (see Theorem \ref{52type2defs}). To the best of the author's knowledge, this is the first known 1-cusped manifold that admits a convex projective structure with a type 2 cusp. Moreover, in Theorem \ref{slicecoordtyperelationship} we show that a ``generic'' deformation constructed by Theorem \ref{mainthm2} will have only type 2 cusps, so in practice Theorem \ref{mainthm2} should produce infinitely many new examples of 1-cusped manifolds that admit a convex projective structure with a type 2 cusp. 

Despite the genericity of type 2 cusps, it is still possible to use Theorem \ref{mainthm2} to produce examples of properly convex manifolds with type 1 cusps. Specifically, we show in Section \ref{controlcusps} that if $M$ satisfies the hypotheses of Theorem \ref{mainthm2} and admits a certain type of orientation reversing symmetry then Theorem \ref{mainthm2} produces convex projective structures on $M$ whose cusps are all of type 1. We then apply this result to show that the complement in $S^3$ of the $6_3$ knot admits a convex projective structure with a type 1 cusp (see Theorem \ref{63type1}).

\subsection*{Organization of the paper} Section \ref{propconvgeom} provides some background and definitions related properly convex geometry, generalized cusps, and deformations of convex projective structures. Section \ref{infdefs} discusses infinitesimal deformations and their relationship to twisted cohomology. It also provides some relevant cohomological results in dimension 3. Section \ref{slicesection} defines the slice that will be used in the main transversality argument and proves several of its important properties. Section \ref{transargument} is the technical heart of the paper. In this section we provide the main transversality argument and prove Theorem \ref{mainthm2}. Section \ref{controlcusps} provides the necessary tools to analyze the geometry of the cusps for the deformations produced by Theorem \ref{mainthm2}. In particular it provides the ingredients to prove Theorem \ref{mainthm}. Finally, Section \ref{examples} outlines the computations necessary to prove the results concerning the $5_2$ knot and $6_3$ knot.

 \subsection*{Acknowledgements} The author would like to thank the anonymous referee for several helpful suggestions, including the addition of the section on obstruction theory. The author would also like to thank Joan Porti for several useful discussions about computing obstructions. The author was partially supported by NSF grant DMS 1709097. The author also acknowledges support from the National Science Foundation grants DMS 1107452, 1107263, 1107367 “RNMS: GEometric structures And Representation varieties” (the GEAR Network).

\section{Properly convex geometry}\label{propconvgeom}

The \emph{projective $n$-sphere}, denoted $\Sp^n$, is the space of rays through the origin in $\R^{n+1}$. More concretely, $\Sp^n=(\R^{n+1}\bs \{0\})/\sim$ where $x\sim y$ if an only if there is $\lambda>0$ such that $x=\lambda y$. The group $\GL(n+1,\R)$ acts on $\Sp^n$, however, this action is not faithful. The kernel of the action is $\R^+\operatorname{I}$. For each class in $\GL(n+1,\R)/\R^+\operatorname{I}$ there is a unique representative with determinant $\pm 1$. Therefore, if we let  
$$G=\SL_\pm(n+1,\R):=\{A\in \GL(n+1,\R)\mid \det(A)=\pm1\}$$
then there is a natural identification of $\GL(n+1,\R)/\R^+\operatorname{I}\cong G$, and $G$ is the full group of projective automorphisms of $\Sp^n$.

 The projective $n$-sphere is related to the more familiar \emph{real projective $n$-space}, denoted $\RP^n$, which consist of lines through the origin in $\R^{n+1}$ via the 2--to--1 covering given by mapping a ray to the line that contains it. It is possible to work entirely with $\RP^n$ instead of $\Sp^n$, however the benefit of working with $\Sp^n$ is that it is orientable for all $n$ and its group of projective automorphisms consists of matrices instead of equivalences classes of matrices. This allows one to use tools from linear algebra, such as eigenvalues, traces, etc., without having to worry about picking representative from equivalence classes.

 A \emph{projective hyperplane}, or \emph{hyperplane} for short, is the image of an $n$-dimensional subspace of $\R^{n+1}$ in $\Sp^n$. In other words, a projective hyperplane is a great $(n-1)$-sphere. If $H$ is a projective hyperplane then either hemisphere of  $\Sp^n\bs H$ is naturally identified with $\mathbb{A}^n$ and is thus called an \emph{affine patch} (see Figure \ref{patch}). The group $G$ acts transitively on the set of affine patches, and so there is model for an affine patch given by 
 $$\mathbb{A}^n=\{[x_1:\ldots,x_n:1]\mid x_i\in \R\},$$
  where $[x_1:\ldots:x_{n+1}]$ is the \emph{homogeneous coordinate} for the ray containing the point $(x_1,\ldots, x_{n+1})\in \R^{n+1}$. The stabilizer in $\GL(n+1,\R)$ of this affine patch is \emph{affine group}, denoted $G_A$, and consists of matrices that can be written in block form as
  $$\begin{pmatrix}
  	A & b\\
  	0 & 1
  \end{pmatrix},$$
  where $A\in \GL(n,\R)$, $b\in \R^n$.  The group $G_A$ acts faithfully on $\mathbb{A}^n$. 

\begin{center}
	\begin{figure}
		\includegraphics[scale=.25]{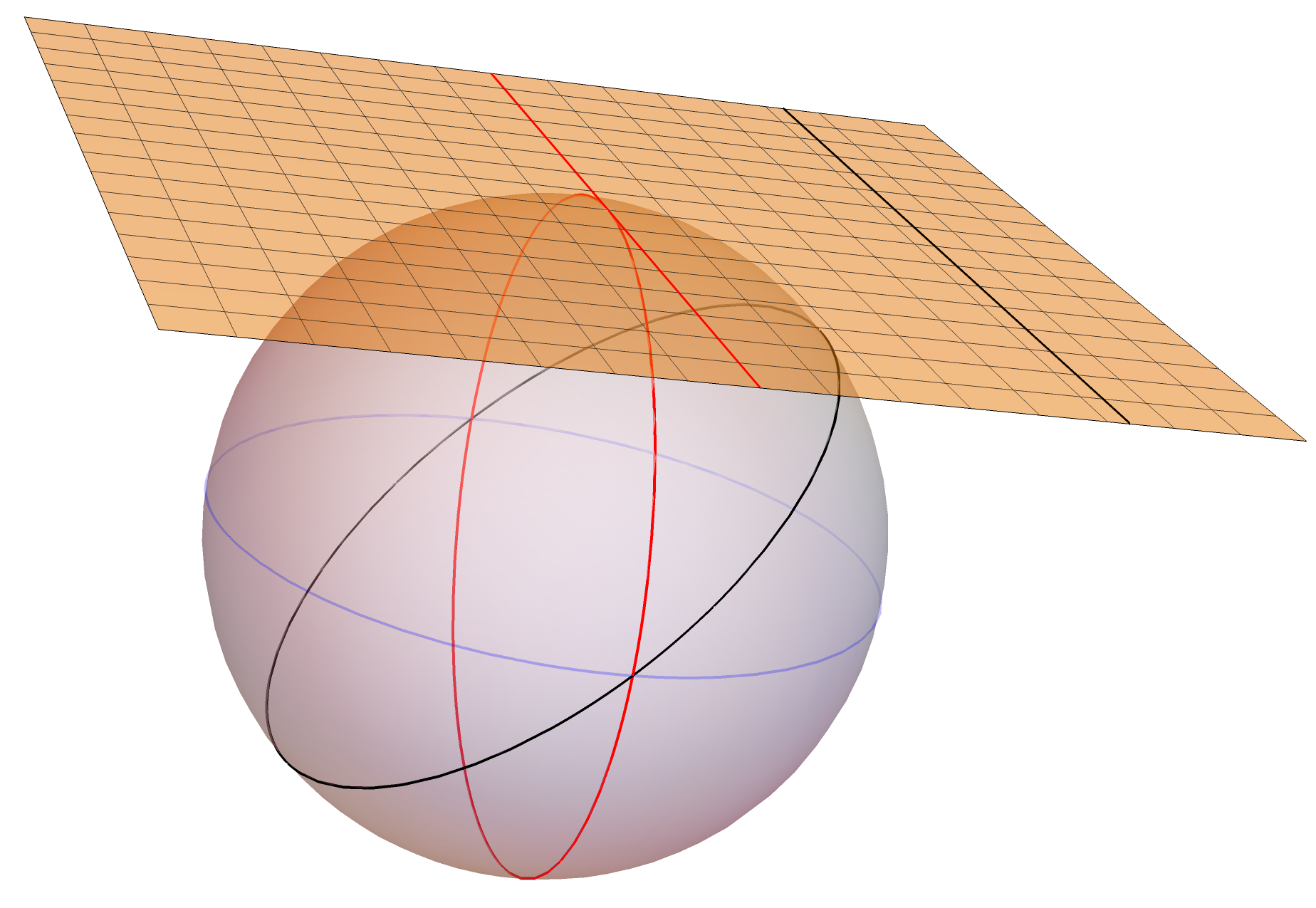}
		\caption{\label{patch} An affine in $\Sp^n$ is identified with $\R^n$ via radial projection}
	\end{figure}
\end{center}

Let $\Omega\subset \Sp^n$ with non-empty interior, then $\Omega$ is \emph{properly convex} if the topological closure, $\overline{\Omega}$, of $\Omega$ is a convex subset of some affine patch. Every properly convex set $\Omega$ comes with a group $\SL(\Omega)$ consisting of elements of $G$ that preserve $\Omega$. 

If $\Omega$ is properly convex and $\Gamma\leq \SL(\Omega)$ is discrete and torsion-free then $\Omega/\Gamma$ is a \emph{properly convex manifold}. An important example to keep in mind is the following: let $\mathcal{C}$ be a component of the interior in $\R^{n+1}$ of the light cone of a quadratic form of signature $(n,1)$ and let $\Omega=\mathcal{C}\cap \Sp^n$. This is the well known \emph{Klein model} of hyperbolic $n$-space. In this setting, $\SL(\Omega)=O^+(n,1)\cong \operatorname{Isom}(\HH^n)$ and we see that complete hyperbolic manifolds are examples of properly convex manifolds. 

\subsection{Generalized cusps in projective manifolds}

Let $M$ be a finite volume hyperbolic $n$-manifold. The thick-thin decomposition allows one to decompose $M$ into $M=M_K\sqcup \partial$, where $M_K$ is a compact manifold (possibly with boundary) homotopy equivalent to $M$ and $\partial=\sqcup_{i=1}^k\partial_i$ is a union of finitely many \emph{cusps}, where each $\partial_i$ is diffeomorphic to $E_i\times (0,\infty)$ for some closed Euclidean $(n-1)$-manifold $E_i$. As a result, $\Delta_i:=\pi_1(\partial_i)=\pi_1(E_i)$ is virtually abelian. It is also possible to describe the geometry of hyperbolic cusps: For each $t\in (0,\infty)$, $E_i\times\{t\}$ is a strictly convex hypersurface in $\partial_i$. Specifically, the universal cover of $E_i\times \{t\}$ can be identified with a horosphere in $\HH^n$. Motivated by the previous discussion of cusps in hyperbolic manifolds we make the following definition:
\begin{definition}\label{gencusp}
	A properly convex $n$-manifold, $C=\Omega/\Gamma$ is a \emph{generalized cusp} if 
	\begin{itemize}
		\item $\Gamma$ is virtually abelian
		\item $C\cong E\times (0,\infty)$, where $E$ is a closed Euclidean $(n-1)$-manifold
		\item For each $t\in (0,\infty)$, the universal cover of $E\times \{t\}$ in $\Omega$ is strictly convex.	\end{itemize}
\end{definition}

The previous discussion shows that cusps of finite volume hyperbolic $n$-manifolds are generalized cusps. Generalized cusps were originally introduced in \cite{CoLoTiII} (using a slightly different definition) where they are instrumental in understanding properly convex deformations of non-compact manifolds. The current definition of generalized cusps is the one given by Cooper, Leitner, and the author in \cite{BCL}. In this work it is shown that the two definitions of generalized cusps are, in fact, equivalent. 

The main result from \cite{BCL} is a classification result for generalized cusps in each dimension. Before providing some specific examples we roughly explain the classification result.  In dimension $n$ there are $n+1$ \emph{types} of cusps which are denoted type 0 through type $n$. Each type determines an $n$-dimensional Lie subgroup of $\GL(n+1,\R)$, $T_k$ (where $k$ is the type), called the \emph{enlarged translation group} which is isomorphic to $\R^n$. Roughly speaking, the larger the type, the closer the enlarged translation group is to being diagonalizable. If $C=\Omega/\Gamma$ is a generalized cusp of type $k$ then $\Gamma$ contains a finite index subgroup $\Gamma'$ that is a lattice in a certain codimension 1 Lie subgroup (depending on $\Gamma$) of $T_k$. 

We now explain the classification in detail in the case where $n=3$. Since the torus is the only closed Euclidean surface it follows that each $3$-dimensional generalized cusp is diffeomorphic to $T^2\times (0,\infty)$. In this case there are 4 types of generalized cusp, and we will primarily concern ourselves with type 0, type 1, and type 2 cusps. For many purposes, it is simpler to work with the Lie algebra $\mathfrak{t}_k$ of the enlarged translation group $T_k$. Nothing is lost working with $\mathfrak{t}_k$ since $\mathfrak{t}_k$ and $T_k$ are isomorphic via the exponential map.  
\subsubsection{Type 0 cusps}
Let $x,y,z\in \R$, then the Lie algebra $\mathfrak{t}_0$ consists of elements of the form
\begin{equation}\label{type0liealg}
	m_0(x,y,z)=\begin{pmatrix}
	0 & x & y & z\\
	0 & 0 & 0 & x\\
	0 & 0 & 0 & y\\
	0 & 0 & 0 & 0
\end{pmatrix},
\end{equation}

and $T_0$ consists of elements of the form
$$M_0(x,y,z)=\exp(m_0(x,y,z))\begin{pmatrix}
1 & x & y & z+\frac{x^2+y^2}{2}\\
0 & 1 & 0 & x\\
0 & 0 & 1 & y\\
0 & 0 & 0 & 1	
\end{pmatrix}
.$$

Consider the codimension 1 subgroup $T(0)$ of $T_0$ consisting of elements of the form $M_0(x,y,0)$. When regarded as elements of $G_A$, $T(0)$ preserves the properly convex set 
$$\Omega_0=\left\{[a:b:c:1]\in \Sp^3\mid a>\frac{b^2+c^2}{2}\right\}.$$
 For $s>0$ let 
 $$\mathcal{H}_s^0=\left\{[a:b:c:1]\in \Sp^3\mid a=\frac{b^2+c^2}{2}+s\right\}.$$
  Each $\mathcal{H}_s$ is also $T(0)$-invariant and the $\mathcal{H}_s^0$ give a codimension 1 foliation of $\Omega_0$ by strictly convex hypersurfaces. A \emph{type 0 generalized cusp} is a properly convex manifold that is projectively equivalent to $\Omega_0/\Gamma$ where $\Gamma$ is a lattice in $T(0)$. Such manifolds are easily seen to be generalized cusps since $\mathcal{H}_s^0/\Gamma$ provides a foliation of $\Omega/\Gamma$ by strictly convex tori.

  This is a familiar construction in the context of hyperbolic geometry: $\Omega_0$ is the paraboloid model of $\HH^3$ (see \cite[\S 3]{CoLoTi}) and the foliation $\mathcal{H}_s^0$ is a foliation of $\HH^3$ by concentric horospheres. The group $T(0)$ consists of parabolic isometries of $\HH^3$ with a common fixed point on $\partial \HH^3$, and $\Omega_0/\Gamma$ is a hyperbolic torus cusp.  
  
  \begin{center}
	\begin{figure}
		\includegraphics[scale=.4]{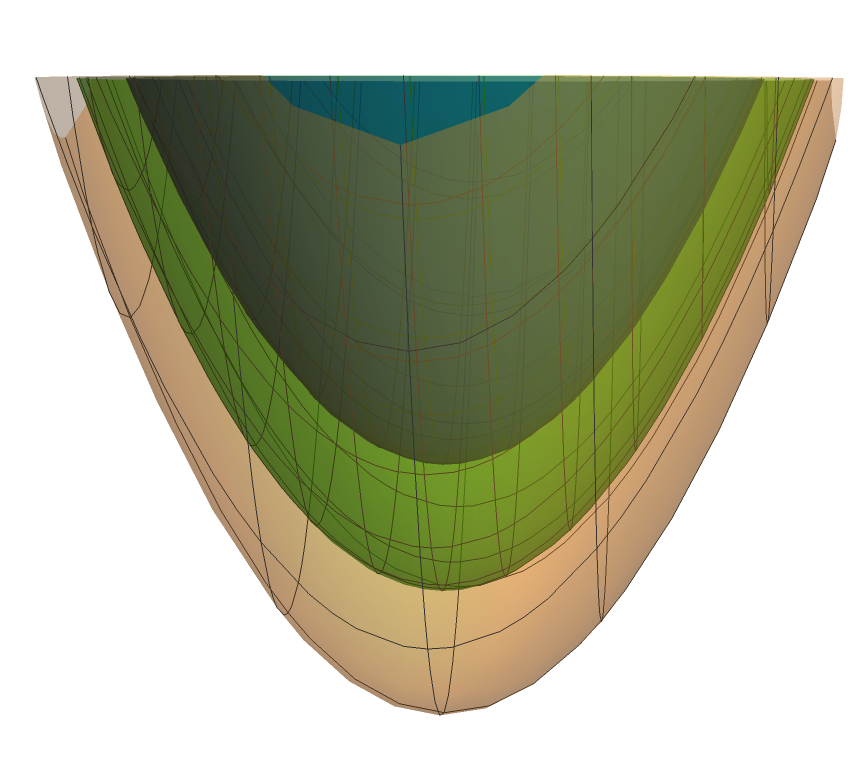}
		\caption{\label{type0} $\HH^3$ along with some leaves of the horosphere foliation viewed in an affine patch}
	\end{figure}
\end{center}

\subsubsection{Type 1 cusps}\label{type1cuspsection}

Again, let $x,y,z\in \R$, then the Lie algebra $\mathfrak{t}_1$ consists of elements of the form 
\begin{equation}\label{type1liealg}
	m_1(x,y,z)=\begin{pmatrix}
	x & 0 & 0 & 0\\
	0 & 0 & y & z\\
	0 & 0 & 0 & y\\
	0 & 0 & 0 & 0 
\end{pmatrix},
\end{equation}

 and let $T_1$ consist of element of the form 
 $$M_1(x,y,z)=\exp(m_1(x,y,z))=\begin{pmatrix}
 	e^x & 0 & 0 & 0\\
 	0 & 1 & y & z+\frac{y^2}{2}\\
 	0 & 0 & 1 & y\\
 	0 & 0 & 0 & 1
 \end{pmatrix}.$$
 
 Let $\lambda\neq 0$ and let $T(\lambda)$ be the codimension 1 subgroup of $T_1$ consisting of elements of the form $M_1(\lambda x,y,-\lambda^{-1}x)$. For any $\lambda\neq 0$, the group $T(\lambda)$ preserves both the properly convex set
 $$\Omega_1:=\left\{[a:b:c:1]\in \Sp^3\mid a>0,\ b>\frac{c^2}{2}-\lambda^{-2}\log(a)\right\}$$
 and the strictly convex codimension 1 foliation of $\Omega_1$ by 
 $$\mathcal{H}^1_s:=\left\{[a:b:c:1]\in \Sp^3\mid a>0,\ b=\frac{c^2}{2}-\lambda^{-2}\log(a)+s\right\},\ s>0$$
\begin{center}
	\begin{figure}
		\includegraphics[scale=.5]{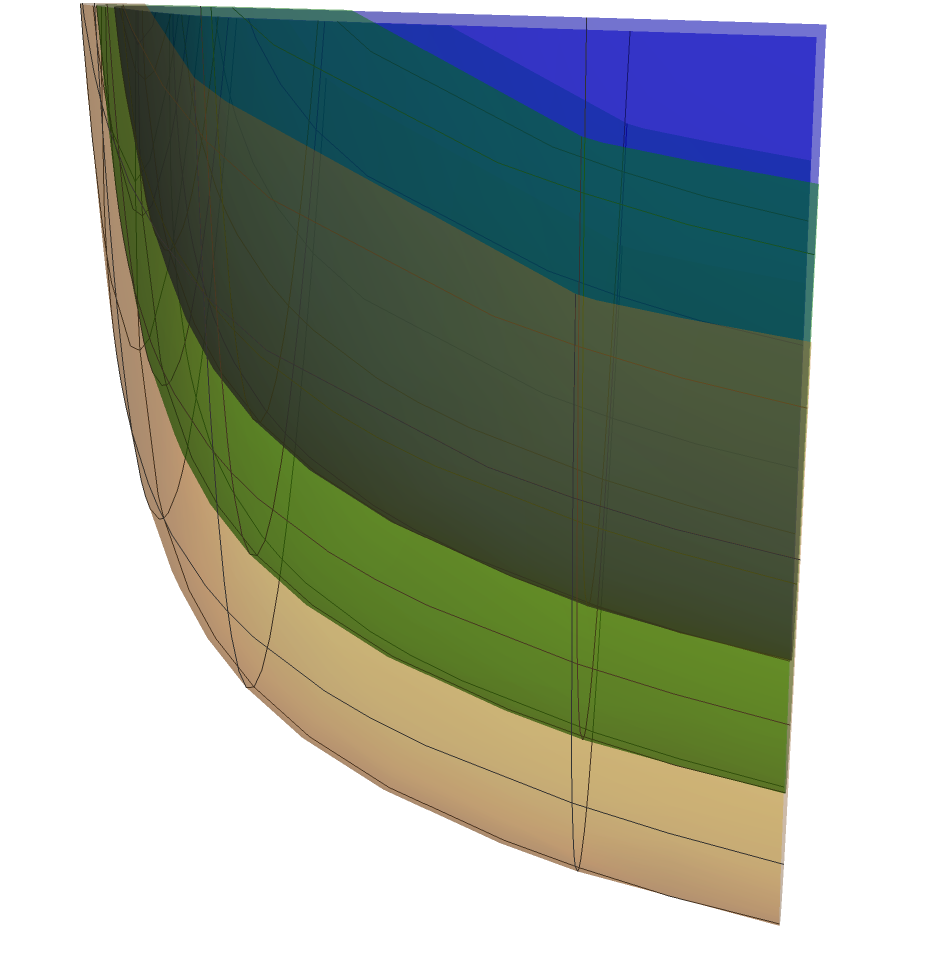}
		\caption{\label{type1} The domain $\Omega_1$ and a few leaves of the foliation $\mathcal{H}^1_s$ in an affine patch $d=1$.}
	\end{figure}
\end{center}

A \emph{type 1 generalized cusp} is a properly convex manifold that is projective equivalent to $\Omega_1/\Gamma$ where $\Gamma$ is a lattice in $T(\lambda)$ for some $\lambda\neq 0$. Again, such manifolds are easily seen to be generalized cusps since $\mathcal{H}_s^1/\Gamma$ provides a foliation of $\Omega_0/\Gamma$ by strictly convex tori.

\subsubsection{Type 2 cusps}\label{type2cuspsection}

Once again, let $x,y,z\in \R$, then the Lie algebra $\mathfrak{t}_2$ consists of elements of the form
\begin{equation}\label{type2liealg}
	m_2(x,y,z)=\begin{pmatrix}
		x & 0 & 0 & 0\\
		0 & y & 0 & 0\\
		0 & 0 & 0 & z\\
		0 & 0 & 0 & 0
	\end{pmatrix},
\end{equation}

and let $T_2$ consist of elements of the form
$$M_2(x,y,z)=\exp(m_2(x,y,z))=\begin{pmatrix}
	e^x & 0 & 0 & 0\\
	0 & e^y & 0 & 0\\
	0 & 0 & 1 & z\\
	0 & 0 & 0 & 1
\end{pmatrix}.$$

Let $\lambda_1,\lambda_2\in \R$ such that $\lambda_1\lambda_2>0$ and let $T(\lambda_1,\lambda_2)$ be the codimension 1 subgroup of $T_2$ consisting of elements of the form $M_2(\lambda_1x,\lambda_2y,-\lambda_1^{-1}x-\lambda_2^{-1}y)$. Each $T(\lambda_1,\lambda_2)$ preserves both the properly convex set 
$$\Omega_2=\left\{[a:b:c:1]\in \Sp^3\mid a,b>0,\ c>-\lambda_1^{-2}\log(a)-\lambda_2^{-2}\log(b)\right\}$$
and the strictly convex codimension 1 foliation
$$\mathcal{H}^2_s=\left\{[a:b:c:1]\in \Sp^3\mid a,b>0,\ c=-\lambda_1^{-2}\log(a)-\lambda_2^{-2}\log(b)+s\right\},\ s>0$$

A \emph{type 2 generalized cusp} is a properly convex manifold that is projectively equivalent to $\Omega_2/\Gamma$ where $\Gamma$ is a lattice in $T(\lambda_1,\lambda_2)$ for some $\lambda_1,\lambda_2\in \R$ with $\lambda_1\lambda_2>0$. As before, these manifolds are easily seen to be generalized cusps. 
\begin{center}
	\begin{figure}
		\includegraphics[scale=.4]{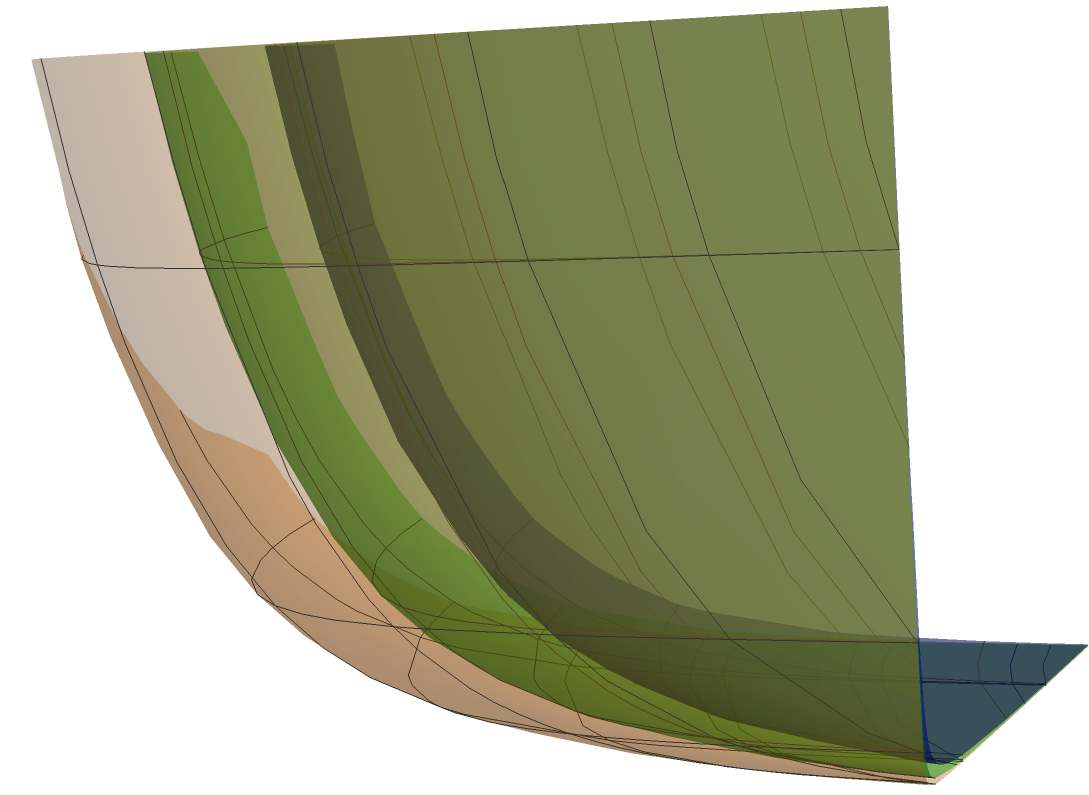}
		\caption{\label{type2} The domain $\Omega_2$ and a few leaves of the foliation $\mathcal{H}^2_s$ in an affine patch $d=1$.}
	\end{figure}
\end{center}

\begin{remark}
If $\lambda_1\lambda_2<0$ then it is still possible to define $\mathcal{H}^2_s$, however, in this case the horospheres are not strictly convex and $\Omega_2$ is not properly convex.  	
\end{remark}

\subsection{Deformation space of convex projective structures}

Let $N$ be the interior of a compact manifold (for instance a finite volume hyperbolic $n$-manifold) and let $\Gamma=\pi_1 N$. A \emph{(marked) convex projective structure} on $N$ is a pair $(f,M)$ where $M=\Omega/\Gamma$ is a properly convex manifold and $f:N\to M$ is a diffeomorphism called a \emph{marking}. Lifting the marking to the universal cover gives a diffeomorphism $\dev:\tilde N\to \Omega$, called a \emph{developing map}. The marking also induces a representation $\rho:\Gamma \to \SL(\Omega)\subset G$ given by $\rho=f_\ast$ called a \emph{holonomy representation}. 

We now define an equivalence relation on marked convex projective structures. Given two marked convex projective structures $(f,M)$ and $(f',M')$ on $N$ with developing maps $\dev$ and $\dev'$, we say that $(f,M)\sim (f',M')$ if there is a submanifold $N_0\subset N$  obtained by removing a collar of $\partial N$ and an element $g\in G$ such that the following diagram computes, up to isotopy. 

$$
\begin{tikzcd}
 & \dev(N_0)\arrow["g",dd]\\
 N_0\arrow["\dev", ur]\arrow["\dev'"',dr] & \\
  & \dev'(N_0)
\end{tikzcd}
$$

  In other words, there is a projective bijection from the complement of a collar of the boundary of $M$ to the complement of a collar of the boundary of $M'$. If $\rho$ and $\rho'$ are the holonomy representations of $(f,M)$ and $(f',M')$ then $\rho'=g\rho g^{-1}$, and so we see that equivalent marked convex projective structures have conjugate holonomy representations. The \emph{deformation space of convex projective structures on $N$}, denoted $\mathfrak{B}(N)$, is the set of marked convex projective structures on $N$, modulo the above equivalence 
  
  Let $\rep (\Gamma,G):=\hom(\Gamma,G)/G,$ where the action of $G$ is by conjugation. For most purposes, it suffices to regard $\rep(\Gamma,G)$ as given by the naive topological quotient. However, it will sometimes be necessary to endow $\rep(\Gamma,G)$ with the structure of an affine variety (at least locally). In order to endow $\rep(\Gamma,G)$ with this type of structure it is necessary to use the categorical quotient (see \cite{LubMag} for details). In general these quotients are not the same, however near the representations we will need to consider these two quotients are locally homeomorphic topological spaces.
  
   By the above discussion, there is a map $\hol :\mathfrak{B}(N)\to \rep(\Gamma,G)$, called the \emph{holonomy map}, that associates to an equivalence class of convex projective structures the conjugacy class of its holonomy representation. Using the weak $C^\infty$ topology on developing maps allows us to endow $\mathfrak{B}(N)$ with a topology for which $\hol$ is continuous. 
   %The following theorem is a consequence of the Ehresmann-Thurston or holonomy principle (see \cite[I.1.7.1]{CaEpGr} for a statement and proof) 
  
  %\begin{theorem}\label{etprinciple}
  %	The map $\hol:\mathfrak{B}(N)\to \rep(\Gamma,G)$ is a continuous local injection.
  %\end{theorem}
  
  We now restrict our attention to the case where $N$ is a finite volume hyperbolic $n$-manifold, and we let $\rhohyp:\Gamma\to G$ be the holonomy representation of a marked hyperbolic structure on $N$. If $N$ is closed then work of Koszul \cite{Ko} shows that $\hol$ is an open map. In other words, if $\rho:\Gamma\to G$ is a representation and $[\rho]$ is sufficiently close to $[\rhohyp]$ in $\rep(\pi_1N,G)$ then $\rho$ is also the holonomy of a marked convex projective structure on $N$. This idea is useful since it reduces the geometric problem of deforming marked convex projective structures on $N$ to the simpler algebraic problem of deforming $[\rhohyp]$ in $\rep(\Gamma,G)$. 
  
  When $N$ is non-compact, Koszul's result breaks down. For example, if $N$ is a cusped hyperbolic $3$-manifold then there are representations arbitrarily close to $[\rhohyp]$ in $\rep(\Gamma,G)$ that correspond to incomplete hyperbolic structures on $N$. It is easily seen that these are not holonomies of marked convex projective structures on $N$ (for instance they are either indiscrete or non-faithful). However, in recent work of Cooper--Long--Tillmann \cite{CoLoTiII} are able to prove a relative version of Koszul's theorem. More precisely, they show that small deformations at the level of representations that preserve certain boundary conditions are guaranteed to be the holonomy of a convex projective structure on $N$. In order to state their precise result we need to introduce some terminology. 
  
  Let $\hom_{ce}(\Gamma,G)$ be the representations of $\Gamma$ into $G$ that are holonomies of convex projective projective structures on $N$ such that each end of $N$ is a generalized cusp. A group $\Delta\subset G$ is a \emph{virtual flag group} if it contains a finite index subgroup that is conjugate in $G$ to an upper-triangular group. For instance, the image of the holonomy of a generalized cusp is a virtual flag group. 
The following is a paraphrasing of part of Theorem 0.2 from \cite{CoLoTiII}.
\begin{theorem}\label{cltdefthm}
	Suppose $W$ is a compact, connected $n$-manifold, let $N=W\backslash \partial W$, and let $\{V_1,\ldots,V_k\}$ be the set of connected components of $\partial W$. Let $B_i\cong V_i\times [0,1)$ be the end of $N$ corresponding to $V_i$. Suppose that $\rho_0\in \hom_{ce}(\Gamma,G)$ and for $t\in (-1,1)$, $\rho_t:\Gamma\to G$ is a continuous path of representation with the property that $\rho_t(\pi_1 B_i)$ is a virtual flag group for each $i$. Then there is $\varep>0$ such that for $t\in (-\varep,\varep)$, $\rho_t\in \hom_{ce}(\Gamma,G)$.  
\end{theorem}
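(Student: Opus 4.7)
The plan is to produce, for each small $t$, a $\rho_t(\Gamma)$-invariant properly convex open set $\Omega_t \subset \Sp^n$ whose quotient realizes $N$ with each end a generalized cusp. The argument proceeds in two parallel strands---a compact core and a family of cusp domains---which must then be glued compatibly. By the projective version of the Ehresmann--Thurston principle (the ingredient underlying Theorem \ref{etprinciple}), the representations $\rho_t$ are holonomies of real projective structures on $N$ for small $t$, with developing maps $\dev_t \colon \tilde N \to \Sp^n$ varying $C^\infty$-continuously. Fix a compact core $W_0 \subset N$ obtained by truncating each cusp, so that $N \setminus W_0 = \bigsqcup_i B_i'$. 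On $\tilde W_0$ the image $\dev_t(\tilde W_0)$ varies continuously in $t$ and at $t=0$ is relatively compact in the properly convex $\Omega_0$; proper convexity of a neighborhood of a compact subset of $\Omega_0$ is stable under small $C^\infty$ perturbation, so $\dev_t(\tilde W_0)$ remains inside a properly convex open set for all small $t$.

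Next, analyze each end. Because $\rho_t(\pi_1 B_i)$ is a virtual flag group by hypothesis and is close to the holonomy of a generalized cusp at $t=0$, the classification of generalized cusps in \cite{BCL} should supply a properly convex open set $C_i^t \subset \Sp^n$, of some type possibly depending on $t$, preserved by $\rho_t(\pi_1 B_i)$ and varying continuously in $t$. The explicit normal forms in Sections \ref{type1cuspsection} and \ref{type2cuspsection} make this step effective: after conjugating $\rho_t(\pi_1 B_i)$ into upper-triangular form, the domain $C_i^t$ and its strictly convex horospherical foliation can be read off directly from the formulas for $\Omega_1$ and $\Omega_2$. Define $\Omega_t$ to be the $\rho_t(\Gamma)$-saturation of the developed core together with the translates of the cusp domains $C_i^t$, arranged to overlap on a collar. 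The main content is to verify that this union is globally properly convex and has the correct quotient topology; local convexity is clear piecewise, but global proper convexity requires ruling out projective line segments that cross the glueing region.

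The main obstacle I anticipate is precisely this global convexity step. Controlling the developed image uniformly across $\tilde N$ demands more than the piecewise data above, since a nontrivial projective segment could in principle enter the core out of one cusp domain and exit into another. A natural strategy is to argue that any such hypothetical segment would be forced, by equivariance under the core holonomy and the cusp translation groups, to have its endpoints in the asymptotic locus of a cusp, and then to use the strictly convex horospherical foliations of Sections \ref{type1cuspsection} and \ref{type2cuspsection} to derive a contradiction by collapsing the segment into a single horospherical limit point. Once proper convexity of $\Omega_t$ is established, the Ehresmann--Thurston machinery converts the fact that $\Omega_t$ is $\rho_t(\Gamma)$-invariant and covers $N$ into the conclusion that $\rho_t \in \hom_{ce}(\Gamma,G)$, completing the proof.
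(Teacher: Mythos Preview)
The paper does not prove this theorem. It is stated as a paraphrasing of Theorem~0.2 of Cooper--Long--Tillmann \cite{CoLoTiII} and is used as a black box; there is no proof in the paper to compare your attempt against.

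That said, a brief comment on your sketch. The overall architecture---produce nearby projective structures via the holonomy principle, handle the ends using the classification of generalized cusps, and then promote local convexity to global proper convexity---is indeed the strategy of \cite{CoLoTiII}, but your account glosses over the genuinely hard parts. Two points in particular: first, $\tilde W_0$ is not compact (it is the universal cover of the compact core), so the statement that $\dev_t(\tilde W_0)$ ``remains inside a properly convex open set for all small $t$'' does not follow from a simple compactness-plus-continuity argument; one needs equivariance together with a uniform estimate on fundamental domains. Second, the global convexity step you flag as the main obstacle is, in fact, the bulk of the work in \cite{CoLoTiII}; it is carried out there not by the segment-chasing argument you propose but via a careful analysis of \emph{radial flow} and a maximality construction for the convex domain. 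Your proposed contradiction via horospherical foliations is not obviously sufficient, since a putative bad segment need not have both endpoints in cusp limit sets. If you want a genuine proof, you should consult \cite{CoLoTiII} directly rather than try to reconstruct it from the statement.
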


Informally, this theorem says that if one performs a small deformation of the holonomy of a properly convex projective structure on $N$ with generalized cusps ends, subject to the constraint that the image of the peripheral subgroups remain virtual flag groups, then the resulting representation is also the holonomy of a properly convex projective structure on $N$ with generalized cusp ends.

\section{Infinitesimal deformations and twisted cohomology}\label{infdefs}
Let $\Gamma$ be a finitely generated group and let $G$ be a Lie group with Lie algebra $\g$. Let $\hom(\Gamma,G)$ be the set of homomorphisms from $\Gamma$ to $G$. This set is called the \emph{representation variety of $\Gamma$ into $G$}, or just representation variety if $\Gamma$ and $G$ are clear from context. If $\Gamma$ is generated by elements $\gamma_1,\ldots, \gamma_k$ then $\hom(\Gamma,G)$ can be regarded as a subset of $G^{k}$. The relations in $\Gamma$ give rise to polynomials in the entries of the elements of $G$ and thus $\hom(\Gamma,G)$ is an algebraic subset of $G^k$. Let $\rho_t$ be a smooth path of representations through $\rho_0$ in $\hom(\Gamma,G)$  
%then for small $t$ we can write
%$$\rho_t(\gamma)=\exp(u(\gamma)t+O(t^2))\rho_0(\gamma),$$
Define $u:\Gamma\to \g$  by the formula
\begin{equation}\label{cocycleFormula}
	\gamma \mapsto \left.\frac{d}{dt}\right |_{t=0}\rho_t(\gamma)\rho_0(\gamma)^{-1}
\end{equation}  Let $Z^1(\Gamma,\g^{\rho_0})$ be the set of 1-cocycles with coefficients in $\g$ twisted by the adjoint of $\rho_0$. More precisely, $Z^1(\Gamma,\g^{\rho_0})$ is the set of functions $v:\Gamma\to \g$ with the property that 
$$v(\gamma_1\gamma_2)=v(\gamma_1)+\rho_0(\gamma_1)\cdot v(\gamma_2),$$
 where the action is given by the composition of $\rho_0$ and the adjoint action. Using this formula, it follows that a cocycle is determined by its values of a generating set. The homomorphism condition on $\rho_t$ implies that $u\in Z^1(\Gamma,\g^{\rho_0})$ and we say that $u$ is \emph{tangent to $\rho_t$ at $\rho_0$}. For this reason, we will refer to elements of $Z^1(\Gamma,\g^{\rho_0})$ as \emph{infinitesimal deformations of $\rho_0$}. As mentioned, the space $\hom(\Gamma,G)$ is an algebraic variety and the above construction gives an identification of $Z^1(\Gamma,\g^{\rho_0})$ and $T_{\rho_0}\hom(\Gamma,G)$, where the latter is the \emph{Zariski tangent space} to $\hom(\Gamma,G)$ at $\rho_0$ (see \cite{LubMag} for details). In general, $\rho_0$ will not be a smooth point of $\hom(\Gamma,G)$ (i.e. $\hom(\Gamma,G)$ need not be a smooth manifold near $\rho_0$). The failure of smoothness near $\rho_0$ manifests itself in the existence of infinitesimal deformations that are not tangent at $\rho_0$ to any smooth path in $\hom(\Gamma,G)$. Thus if we wish to prove that $\hom(G,\Gamma)$ is smooth at $\rho_0$ then we must show that every infinitesimal deformation is tangent to a smooth path through $\rho_0$.  
 
 The space $Z^1(\Gamma,\g^{\rho_0})$ consists of 1-cocyles of the chain complex $C^\ast(\Gamma,\g^{\rho_0})$ coming from the group cohomology of $\Gamma$ with coefficients in the $\Gamma$-module $\g^{\rho_0}$ (see \cite{BrownCohom} for more details). The cohomolgy groups of this chain complex will be referred to as $H^\ast(\Gamma,\g^{\rho_0})$. In the case that $\Gamma$ is the fundamental group of an aspherical manifold (e.g.\ a finite volume hyperbolic manifold) then the group cohomology $H^\ast(\Gamma,\g^{\rho_0})$ is naturally isomorphic to the singular cohomology with twisted coefficients, $H^\ast(M,\g^{\rho_0})$. This identification is quite useful, as it allows one to use techniques such as Poincar\'e duality in order to compute group cohomology.  
 
 \subsection{Cohomology of 3-manifolds}\label{3mfldCohomology}

In this section we discuss the cohomology (with twisted coefficients) of hyperbolic 3-manifolds. Throughout this section let $M$ be a finite volume hyperbolic 3-manifold (typically non-compact), let $\Gamma=\pi_1M$, $G=\SL(4,\R)$, and let $\g=\mathfrak{sl}(4,\R)$ be its Lie algebra. Note that since $M$ is aspherical the cohomology group $H^\ast(\Gamma,\g^{\rhohyp})$ is naturally isomorphic to $H^\ast(M,\g^{\rhohyp})$. Let 
\begin{equation}\label{31form}
	J=\begin{pmatrix}
	0 & 0 & 0 & -1\\
	0 & 1 & 0 & 0\\
	0 & 0 & 1 & 0\\
	-1 & 0 & 0 & 0
\end{pmatrix},
\end{equation}
and let $SO(3,1)=\{A\in \SL(4,\R)\mid A^tJA=J\}$. In this setting, there is a representation $\rhohyp:\Gamma\to \SO(3,1)\subset G$ given by the holonomy of the complete hyperbolic structure on $M$. By Mostow rigidity, this representation is unique up to conjugacy in $G$.  

There is also a useful splitting of $\g$ (as an $SO(3,1)$-module). The group $SO(3,1)$ acts on $\g$ via the adjoint action (i.e.\ if $g\in \SO(3,1)$ and $a\in \g$ then $g\cdot a=gag^{-1}$). The map $a\mapsto -Ja^tJ$ is an $SO(3,1)$-module isomorphism. This map is an involution and whose 1-eigenspace is 
$$\so(3,1)=\{a\in \g\mid a^tJ+Ja=0\}$$
 and the we denote the -1-eigenspace by $\v$. This gives a splitting
\begin{equation}\label{slsplitting}
	\g=\so(3,1)\oplus \v.
\end{equation}
Observe that this is only a splitting of $\SO(3,1)$-modules and not of Lie algebras since $\v$ is not closed under Lie brackets. The above construction can be repeated using other symmetric matrices, $J'$ of signature $(3,1)$. Using $J'$ will result in a new splitting of $\sl(4,\R)$ that differs from the original splitting by a conjugacy in $G$. For instance, when executing some of the computation in Section \ref{examples} it is convenient to use a slightly different form. 

The splitting \eqref{slsplitting} induces a splitting at the level of cohomology:
\begin{equation}\label{cohomsplitting}
H^1(\Gamma,\g^{\rhohyp})\cong H^1(\Gamma,\so(3,1)^{\rhohyp})\oplus H^1(\Gamma,\v^{\rhohyp})	
\end{equation}

as well as maps $\pi_{\so(3,1)}:H^1(\Gamma,\g^{\rhohyp})\to H^1(\Gamma,\so(3,1)^{\rhohyp})$ and $\pi_\v:H^1(\Gamma,\g^{\rhohyp})\to H^1(\Gamma,\v^{\rhohyp})$. 

A useful way to understand the cohomology groups of our 3-manifold is by restricting to the boundary. Suppose that $M$ has $k$ cusps and let  $\partial:=\sqcup_{i=1}^k\partial_i$, where $\partial_i$ is the $i$th cusp of $M$. If $\Delta_i=\pi_1\partial_i$, then for each $i$, there is a restriction map, $\res_i:\hom(\Gamma,G)\to \hom(\Delta_i,G)$ given by regarding $\Delta_i$ as a subgroup of $\Gamma$ and restricting representations. By abuse of notation we will denote $\res_i\rhohyp$ by $\rhohyp$. Each of the above maps descends to $(\res_i)_\ast:H^1(\Gamma,\g^{\rhohyp})\to H^1(\Delta_i,\g^{\rhohyp})$. Next, define $\Delta=\oplus_{i=1}^k\Delta_i$, then we have a canonical identification $
\hom(\Delta,G)\cong \bigtimes_{i=1}^k \hom(\Delta_i,G)$. Define $Z^1(\Delta,\g^{\rhohyp}):=\bigoplus_{i=1}^k Z^1(\Delta_i,\g^{\rhohyp})$ and define $B^1(\Delta,\g^{\rhohyp})$ and $H^1(\Delta,\g^{\rhohyp})$ in a similar fashion. Taking the direct sum of the above maps gives 
$$\oplus_{i=1}^k(\res_i)_\ast=:\res_\ast:Z^1(\Gamma,\g^{\rhohyp})\to Z^1(\Delta,\g^{\rhohyp}).$$

 This map sends $B^1(\Gamma,\g^{\rhohyp})$ into $B^1(\Delta,\g^{\rhohyp})$, and thus descends to a map which, by abuse, we denote $\res_\ast:H^1(\Gamma,\g^{\rhohyp})\to H^1(\Delta,\g^{\rhohyp})$. The map $\res_\ast$ respects the splitting \eqref{slsplitting} and we get corresponding maps which by further abuse of notation we call $\res_\ast:H^1(\Gamma,\so(3,1)^{\rhohyp})\to H^1(\Delta,\so(3,1)^{\rhohyp})$ and $\res_\ast:H^1(\Gamma,\v^{\rhohyp})\to H^1(\Delta,\v^{\rhohyp})$.
 
 Next, for each $1\leq i\leq k$, let $m_i\subset \partial_i$ be a homotopically non-trivial curve and let $\mu\subset \Delta$ be the direct sum of the cyclic subgroups generated by the homotopy classes of the $m_i$. Using a construction similar to the previous paragraph, there is again a restriction map which we abusively call $\res_\ast:H^1(\Gamma,\g^{\rhohyp})\to H^1(\mu,\g^{\rhohyp})$. As above there are also analogous maps with coefficients in either $\so(3,1)$ or $\v$.

We next discuss cohomology with coefficients in $\so(3,1)$. These cohomology groups are classically studied and well understood. The following Lemma summarizes some well known properties of $H^1(\Gamma,\so(3,1)^{\rhohyp})$ that will be important for our purposes.

\begin{lemma}\label{so31cohomology}
Suppose that $M$ has $k$ cusps, then 
\begin{itemize}
	\item $H^1(\Delta,\so(3,1)^{\rhohyp})$ is $4k$-dimensional
	\item $H^1(\Gamma,\so(3,1)^{\rhohyp})$ is $2k$-dimensional
	\item $\res_\ast:H^1(\Gamma,\so(3,1)^{\rhohyp})\to H^1(\mu,\so(3,1)^{\rhohyp})$ is injective
\end{itemize}	
\end{lemma}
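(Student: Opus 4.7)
The three assertions are classical facts in the deformation theory of cusped hyperbolic $3$-manifolds; I would assemble them from the long exact sequence of the pair $(N, \partial N)$ for a compact core $N \subset M$, Poincar\'e--Lefschetz duality, and one rigidity input. Throughout, self-duality of $\so(3,1)$ under the Killing form lets me identify $\so(3,1)$ with its dual as a $\Gamma$-module.

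First I would dispose of the boundary calculation. Each $\Delta_i$ is free abelian of rank $2$, classified by $T^2$, and $\rhohyp$ sends $\Delta_i$ onto a lattice in the $2$-dimensional abelian unipotent radical $N_i \subset \SO(3,1)$ of a maximal parabolic. Since $\Delta_i$ is Zariski dense in $N_i$, the $\Ad\,\rhohyp(\Delta_i)$-invariants in $\so(3,1)$ coincide with the centralizer of $N_i$; a direct computation using the upper-triangular description of the parabolic identifies this centralizer with $\mathrm{Lie}(N_i)$, so $h^0(\Delta_i, \so(3,1)) = 2$. Poincar\'e duality on $T^2$ gives $h^2 = h^0$, and the Euler characteristic identity $h^0 - h^1 + h^2 = 0$ then forces $h^1(\Delta_i, \so(3,1)) = 4$, proving the second bullet after summing over $i$.

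For injectivity (the third bullet) I would invoke the infinitesimal Mostow--Prasad rigidity of $M$ relative to its cusps, in the form due to Garland: any class in $H^1_{\rhohyp}(\Gamma, \so(3,1))$ whose restriction to each $\Delta_i$ is a coboundary is itself zero. Geometrically this is the statement that an infinitesimal deformation of the complete hyperbolic structure that preserves every cusp shape must be trivial.

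For the dimension of $H^1_{\rhohyp}(\Gamma, \so(3,1))$, I would apply the ``half lives, half dies'' principle. The cup product paired using the Killing form gives a pairing $H^1(\partial N, \so(3,1)) \otimes H^1(\partial N, \so(3,1)) \to \R$ which is alternating (cup product on $H^1 \otimes H^1$ is graded-commutative) and non-degenerate (since the Killing form is non-degenerate and each boundary torus satisfies Poincar\'e duality). Poincar\'e--Lefschetz duality for $(N, \partial N)$ with $\so(3,1)$ coefficients, together with the fact that $M$ has the homotopy type of a $2$-complex and that $\rhohyp$ has trivial invariants on $\so(3,1)$, forces the image of $\res_\ast$ to be self-annihilating in this symplectic form, hence Lagrangian of dimension $\tfrac{1}{2}\cdot 4k = 2k$. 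Combined with injectivity, this yields $h^1(\Gamma, \so(3,1)) = 2k$, proving the first bullet.

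The genuinely nontrivial input is the rigidity statement used for injectivity; the rest is linear algebra extracted from the long exact sequence of the pair. Some care is also required to justify Poincar\'e--Lefschetz duality in the cusped setting, which I would handle by working with the compact core $N$ throughout rather than with the open manifold.
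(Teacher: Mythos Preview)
The paper does not actually prove this lemma: it records the three statements as classical and cites Thurston's hyperbolic Dehn surgery for the first bullet and Calabi--Weil rigidity for the third, leaving the second as a standard computation. Your outline supplies the argument the paper omits, and it is correct.

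Your route to the first bullet---deducing $\dim H^1_{\rhohyp}(\Gamma,\so(3,1))=2k$ from injectivity together with the Lagrangian property of the image---is exactly the observation the paper makes immediately \emph{after} the lemma (citing Heusener--Porti for the symplectic structure), so you are in line with the paper's viewpoint even though the paper nominally attributes the dimension count to Thurston. One phrasing point: ``self-annihilating \ldots\ hence Lagrangian'' is ambiguous, since isotropic alone does not give half-dimensionality. The actual mechanism is that the long exact sequence identifies $\mathrm{im}(\res_\ast)$ with the kernel of the connecting map $H^1(\partial N)\to H^2(N,\partial N)$, and Poincar\'e--Lefschetz duality identifies that connecting map with the adjoint of $\res_\ast$ under the cup-product pairing; hence the image equals its own annihilator and is Lagrangian. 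You evidently have this in mind, but the sentence as written elides the step.

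On attribution: the paper names the rigidity input Calabi--Weil and uses this name again later (in the proof of Lemma~\ref{translemma}); you cite Garland. For non-cocompact finite-volume lattices the relevant vanishing is usually credited to Garland--Raghunathan rather than Garland alone. Either citation is defensible, but you may want to align with the paper's terminology.
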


\begin{proof}
	The first point follows from Poincar\'e duality. More specifically, $H^0(\Delta,\so(3,1)^{\rhohyp})$ is by construction the sum of the infinitesimal centralizers of $\rhohyp(\Delta_i)$ in $\so(3,1)$. A simple computation shows that each of these is 2-dimensional and so $\dim H^0_{\rhohyp}(\Delta,\so(3,1)^{\rhohyp})=2k$. Since $\Delta:=\oplus_{i=1}^k\Delta_i$ is the sum of fundamental groups of a closed 2-dimensional manifolds Poincare duality implies that $\dim H^2_{\rhohyp}(\Delta,\so(3,1)^{\rhohyp})=2k$. Since the Euler characteristic of $\partial$ is zero it again follows from Poincare duality that $\dim H^1(\Delta,\so(3,1)^{\rhohyp})=4k$.
	
	The second and third points are algebraic consequences of the Thurston Dehn filling theorem and their proof can be found in \cite[Prop 3.3]{HePo} 
\end{proof}
 
 As a result of Lemma \ref{so31cohomology}, we see that the image of $H^1(\Gamma,\so(3,1)^{\rhohyp})$ is a half-dimensional subspace. This is not coincidental, as there turns out to be a symplectic form on $H^1(\Delta,\so(3,1)^{\rhohyp})$ induced by the cup product, for which the image of $H^1(\Gamma,\so(3,1)^{\rhohyp})$ is a Lagrangian subspace \cite[\S 5]{HePo}. 
 
 Cohomology with coefficients in $\v$ is less well understood, but in this setting we have the following weaker analogue of Lemma \ref{so31cohomology}, whose proof can be found in \cite{HePo}
 
\begin{lemma}[Cor.\ 5.2 \& Lem.\ 5.3 of \cite{HePo}]\label{vcohomology}
	Suppose that $M$ has $k$ cusps, then 
	\begin{itemize}
		\item $H^1(\Delta,\v^{\rhohyp})$ is $2k$-dimensional
		\item $\res_\ast(H^1(\Gamma,\v^{\rhohyp}))\subset H^1(\Delta,\v^{\rhohyp})$ is $k$-dimensional.
	\end{itemize}
\end{lemma}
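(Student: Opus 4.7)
The plan is to prove the two dimension counts separately, using Poincar\'e duality with twisted coefficients preceded by an explicit invariant computation.

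Since $\Delta=\bigsqcup_{i=1}^k\Delta_i$, we have $H^\ast(\Delta,\v)=\bigoplus_i H^\ast(\Delta_i,\v)$, so the first statement reduces to showing $\dim H^1(\Delta_i,\v)=2$ for each cusp. The essential input is $\dim\v^{\Delta_i}=1$. The image $\rhohyp(\Delta_i)\subset\SO(3,1)$ is a rank two unipotent subgroup stabilizing a null line $\ell_i\subset\R^4$; under the map $a\mapsto Ja$, $\v$ is identified with the space of $J$-traceless symmetric bilinear forms on $\R^4$ carrying the pullback action. Solving the commutator equations $[N_1,a]=[N_2,a]=0$ for $N_1,N_2$ spanning the Lie algebra of $\rhohyp(\Delta_i)$ produces a one-dimensional space of invariants, spanned by the rank-one form $p\otimes p$ where $p\in\ell_i$ (automatically $J$-traceless since $p$ is null). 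The trace form on $\sl(4,\R)$ restricts to a non-degenerate, symmetric, $\SO(3,1)$-invariant pairing on $\v$ (the orthogonality $\so(3,1)\perp\v$ follows from the very definition of $\v$), so $\v\cong\v^\ast$ as $\SO(3,1)$-modules. Since $\Delta_i\cong\pi_1(T^2)$, Poincar\'e duality with twisted coefficients gives $\dim H^2(\Delta_i,\v)=\dim H^0(\Delta_i,\v)=1$, and the vanishing of $\chi(T^2)$ forces
\[
\dim H^1(\Delta_i,\v)=\dim H^0(\Delta_i,\v)+\dim H^2(\Delta_i,\v)=2.
\]
Summing over $i$ gives $\dim H^1(\Delta,\v)=2k$.

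For the second statement, let $N$ be a compact core of $M$, so $\Gamma=\pi_1 N$ and $\partial N=\bigsqcup_i T^2_i$. The long exact sequence of the pair $(N,\partial N)$ with coefficients in $\v$,
\[
H^1(N,\partial N;\v)\to H^1(N;\v)\xrightarrow{\res_\ast} H^1(\partial N;\v)\xrightarrow{\delta} H^2(N,\partial N;\v),
\]
combined with Poincar\'e--Lefschetz duality $H^i(N,\partial N;\v)\cong H^{3-i}(N;\v)^\ast$ (via $\v\cong\v^\ast$), identifies $\res_\ast$ and $\delta$ as mutually dual with respect to the cup-product pairings. The bilinear form
\[
H^1(\partial N;\v)\otimes H^1(\partial N;\v)\to H^2(\partial N;\R)=\R^k
\]
obtained by composing the cup product with the trace form on $\v$ is non-degenerate (Poincar\'e duality on each torus component) and skew-symmetric (cup product of $1$-classes is anti-commutative while the form on $\v$ is symmetric), hence symplectic. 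Duality of $\res_\ast$ with $\delta$ then forces the image of $\res_\ast$ to equal its own annihilator, i.e.\ to be a Lagrangian subspace, and therefore to have dimension $\tfrac12\dim H^1(\partial N;\v)=k$.

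The principal difficulty is the explicit verification that $\dim\v^{\Delta_i}=1$: this is a finite but nontrivial linear algebra calculation that must be done in a coordinate model where the parabolic group is upper triangular. Once this invariant count is in hand, the remainder of the argument is a routine application of Poincar\'e duality theory for closed surfaces and bounded $3$-manifolds with local coefficients.
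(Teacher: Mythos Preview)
The paper does not give its own proof of this lemma; it simply cites Corollary~5.2 and Lemma~5.3 of Heusener--Porti.  So there is no in-paper argument to compare against directly.

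Your argument is correct and is essentially the standard one.  For the first bullet, the paper does independently confirm $\dim H^1_{\rho_s}(\Z^2,\v)=2$ later on (Lemma~\ref{vbasis}), but by a different method: rather than computing $\dim\v^{\Delta_i}=1$ and invoking Poincar\'e duality and $\chi(T^2)=0$, it writes down the two explicit cocycles $D_a,D_b$ and checks by hand that their classes are linearly independent.  Your approach is cleaner and more conceptual; the paper's explicit basis is what is actually needed for the transversality computations that follow.  For the second bullet, your ``image is Lagrangian'' argument via the long exact sequence of $(N,\partial N)$ and Poincar\'e--Lefschetz duality is exactly the mechanism behind the Heusener--Porti result; the paper itself alludes to the same phenomenon for $\so(3,1)$ coefficients in the paragraph following Lemma~\ref{so31cohomology}.

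One small point worth tightening: when you say the image of $\res_\ast$ ``equals its own annihilator,'' the precise statement you are using is that the adjointness $\langle\res_\ast\alpha,\beta\rangle_{\partial N}=\pm\langle\alpha,\delta\beta\rangle_N$ (from compatibility of cup product with the connecting map) gives $(\operatorname{im}\res_\ast)^{\perp}=\ker\delta=\operatorname{im}\res_\ast$.  You have this, but it is stated a bit tersely; a reader might appreciate seeing the one-line justification that non-degeneracy of the $H^1(N)\times H^2(N,\partial N)$ pairing is what turns the adjointness into the equality of subspaces.
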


We now have the requisite background and context to state our cohomological condition. A manifold $M$ is \emph{infinitesimally rigid rel.\ $\partial M$} if the map $\res_\ast:H^1(\Gamma,\g^{\rhohyp})\to H^1(\Delta,\g^{\rhohyp})$ is injective. To avoid cumbersome phrasing, we will often abbreviate this terminology and say that $M$ is infinitesimally rigid. In other words, $M$ is infinitesimally rigid rel.\ $\partial M$ if there are no infinitesimal deformations of $M$ that are infinitesimal conjugacies when restricted to each cusp.  This condition was first introduced in \cite{HePo}.  Some comments regarding this condition are in order. First, the map $\res_\ast:H^1(\Gamma,\so(3,1)^{\rhohyp})\to H^1(\Delta,\so(3,1)^{\rhohyp})$ factors through $\res_\ast H^1(\Gamma,\so(3,1)^{\rhohyp})\to H^1(\mu,\so(3,1)^{\rhohyp})$ and so by Lemma \ref{so31cohomology}, infinitesimal rigidity of $M$ is equivalent to the injectivity of $\res_\ast:H^1(\Gamma,\v^{\rhohyp})\to H^1(\Delta,\v^{\rhohyp})$. Second, by Lemma \ref{vcohomology}, the dimension of $H^1(\Gamma,v)$ is at least $k$ and so $M$ is infinitesimally rigid if the dimension of $H^1(\Gamma,\v^{\rhohyp})$ is \emph{exactly} $k$. 

There are infinitely many infinitesimally rigid cusped hyperbolic 3-manifolds. Specifically, Heusener and Porti \cite{HePo} show that infinitely many surgeries on the Whitehead link result in manifolds that are infinitesimally rigid. Examples of such families include infinitely many twist knots and infinitely many once-punctured torus bundles with tunnel number one. Furthermore, based on numerical computation by J.\ Danciger, G.-S. Lee, and the author it appears that infinitesimal rigidity is a fairly common property amongst 3-manifolds in the SnapPy \cite{SnapPy} cusped census.

On the other hand, there are also infinitely many cusped 3-manifolds that are not infinitesimally rigid. For example, if $M$ contains a closed, embedded, totally-geodesic hypersurface, then it is possible to perform a type of deformation called bending (see \cite{JohnMill} or \cite{BalMar} for details). These deformations are trivial when restricted to any cusp, and so if $M$ contains such a hypersurface then $M$ is not infinitesimally rigid rel.\ $\partial M$.

We close this section by describing an important consequence of infinitesimal rigidity. As we have seen, the set $H^1(\Gamma,\g^{\rhohyp})$ can be interpreted as non-trivial infinitesimal deformations of $\rhohyp$. Given a cohomology class, $[w]\in H^1(\Gamma,\g^{\rhohyp})$ one would like to know if there is a family $\rho_t:\Gamma\to G$ of representations that is tangent to $w$. In the language of algebraic geometry, $w$ is a tangent vector in the Zariski tangent space of the algebraic variety $\hom(\Gamma,G)$ and this question is equivalent to the question of whether or not $\rhohyp$ is a smooth point of $\hom(\Gamma,G)$. There are numerous examples where $\rhohyp$ fails to be a smooth point (see \cite{CoLoThist1} for explicit examples). There is also a related result of Kapovich--Millson \cite{KapMil} that, roughly speaking, says for 3-manifolds and representations into $\SL(2,\C)$ that arbitrary singularities are possible. However the following result from \cite{BDL} shows that for infinitesimally rigid 3-manifolds, $\rhohyp$ is a smooth point of $\hom(\Gamma,G)$ and $\rep(\Gamma,G)$. 

\begin{theorem}[see Thm 3.2 in \cite{BDL}]\label{infrigimpliessmooth}
	Suppose $M$ is a cusped finite volume hyperbolic 3-manifold with $\rhohyp:\Gamma\to SO(3,1)$ the holonomy of the complete hyperbolic structure on $M$. If $M$ is infinitesimally rigid rel.\ $\partial M$ then $\rhohyp$ is a smooth point of $\hom(\Gamma,G)$ and $[\rhohyp]$ is a smooth point of $\rep(\Gamma,G)$. 
	\end{theorem}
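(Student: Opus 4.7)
The plan is to prove smoothness by a dimension count at both the representation and character variety levels: identify the Zariski tangent dimension at $\rhohyp$, then exhibit an actual smooth family through $\rhohyp$ of matching dimension. Since the local dimension of an algebraic variety is always bounded above by its Zariski tangent dimension, matching the two forces smoothness.

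I would first compute the Zariski tangent dimension. Using the splitting \eqref{cohomsplitting},
\[
\dim H^1_{\rhohyp}(\Gamma,\g) = \dim H^1_{\rhohyp}(\Gamma,\so(3,1)) + \dim H^1_{\rhohyp}(\Gamma,\v).
\]
Lemma \ref{so31cohomology} gives $2k$ for the $\so(3,1)$-summand. Infinitesimal rigidity forces $\res_\ast$ to be injective on the $\v$-summand, so combined with the $k$-dimensional image bound in Lemma \ref{vcohomology}, the $\v$-summand has dimension exactly $k$. Hence $\dim H^1_{\rhohyp}(\Gamma,\g) = 3k$. Since $\rhohyp$ is Zariski dense in $\SO(3,1)$, the centralizer of its image in $G$ is trivial at the Lie algebra level, so $\dim Z^1_{\rhohyp}(\Gamma,\g) = 3k + \dim G = 3k + 15$, which is the Zariski tangent dimension of $\hom(\Gamma,G)$ at $\rhohyp$.

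Next I would construct an explicit smooth family of representations through $\rhohyp$ of matching dimension. Thurston's hyperbolic Dehn surgery theorem supplies a smooth $(2k+6)$-dimensional family of representations into $\SO(3,1)\subset G$; composing with $G$-conjugation, whose orbit through $\rhohyp$ adds nine further directions transverse to the $\SO(3,1)$-orbit, yields a $(2k+15)$-dimensional family accounting entirely for the $\so(3,1)$-summand of $H^1$. The remaining $k$ parameters must come from genuinely $\v$-direction deformations, which infinitesimally move each cusp holonomy out of $\SO(3,1)$ toward a diagonalizable (type 3) virtual flag group; the one-parameter family of type 3 targets at each of the $k$ cusps supplies the right number of parameters. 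Lifting these peripheral deformations simultaneously to a global deformation of $\rhohyp$ would be achieved via the implicit function theorem applied to the restriction map $\res:\hom(\Gamma,G)\to\hom(\Delta,G)$, with infinitesimal rigidity furnishing the necessary transversality input.

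The main obstacle is this final lifting step, equivalently the vanishing of all obstructions in $H^2_{\rhohyp}(\Gamma,\g)$ to extending the $\v$-direction infinitesimal deformations to honest deformations. By Poincar\'e--Lefschetz duality together with infinitesimal rigidity, $H^2_{\rhohyp}(\Gamma,\g) \cong H^0_{\rhohyp}(\partial\Gamma,\g)$, a concrete direct sum of peripheral centralizer spaces, so obstruction classes localize to the individual cusps. Choosing diagonalizable (type 3) targets, whose peripheral character varieties are smooth, causes each local obstruction to vanish, letting the global deformation integrate. Once the $(3k+15)$-dimensional smooth family is in hand, equality with the Zariski tangent dimension proves that $\rhohyp$ is a smooth point of $\hom(\Gamma,G)$; quotienting by the conjugation orbit then yields smoothness of $[\rhohyp]$ in $\rep(\Gamma,G)$.
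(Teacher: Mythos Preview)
The paper does not prove this theorem; it is quoted from \cite{BDL}, so there is no in-paper proof to compare against. Your strategy---compute $\dim Z^1_{\rhohyp}(\Gamma,\g)=3k+15$ and then exhibit a smooth family of matching dimension---is the standard one and is almost certainly what \cite{BDL} does. The dimension count and the identification $H^2_{\rhohyp}(\Gamma,\g)\cong H^0_{\rhohyp}(\Delta,\g)^*$ (via Poincar\'e--Lefschetz duality, the long exact sequence of $(M,\partial M)$, and irreducibility of $\rhohyp$) are both correct.

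The gap is in your final step. Your appeal to the implicit function theorem for $\res:\hom(\Gamma,G)\to\hom(\Delta,G)$ is circular: the implicit function theorem requires the domain to already be a manifold near $\rhohyp$, which is precisely what you are proving. Your obstruction argument is the right idea but incomplete. What actually makes it work is that, under infinitesimal rigidity, the restriction $H^2_{\rhohyp}(\Gamma,\g)\to H^2_{\rhohyp}(\Delta,\g)$ is an \emph{isomorphism} (from the long exact sequence, once one checks that $H^2(M,\partial M;\g)\to H^2(M;\g)$ is zero and $H^3(M,\partial M;\g)=0$). Since obstruction classes are natural under restriction, a Zariski tangent vector at $\rhohyp$ integrates in $\hom(\Gamma,G)$ if and only if its restriction integrates in $\hom(\Delta,G)$. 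Thus what you need is smoothness of $\hom(\Z^2,G)$ at the \emph{type 0} point $\rhohyp\vert_{\Delta_i}$---not at type 3 points as you write, since all obstructions are computed at the basepoint. That smoothness is a separate verification (the commuting variety in $\SL_4$ is irreducible of dimension $18$, matching $\dim Z^1_{\rhohyp}(\Z^2,\g)=18$ at such a point); once it is established, every Zariski tangent vector integrates and your argument concludes.
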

	
	For the sake of completeness, we include a proof of Theorem \ref{infrigimpliessmooth}, however the proof will be deferred until the next section since it requires the development of the appropriate obstruction theory for $\SL_4$ representations. 

\subsection{Obstruction theory}
In this section we let $G=\SL_4(\R)$. We now discuss the problem of when an infinitesimal deformation is tangent to a smooth path in $\hom(\Gamma,G)$. Roughly speaking, our strategy will be to start with $u\in Z^1(\Gamma,\g^{\rho_0})$ and try to construct a \emph{formal deformation of $\rho_0$ tangent to $u$} (i.e. a representation $\tilde\rho_t\in \hom(\Gamma,\SL_4(\R[[t]])$) whose ``formal derivative'' is $u$. If such a formal deformation can be constructed then we can apply a deep theorem of Artin \cite[Thm 1.2]{Art} to show that there is a smooth path $\rho_t$ in $\hom(\Gamma,G)$ tangent to $u$ at $\rho_0$. 

 Many of the results of this section are similar to those from \cite[\S 3]{HePoSuSL2} where the authors provide a detailed discussion of the analogous obstruction theory for $\SL_2(\C)$ representations (see Remark \ref{mistake}). The results in this section are stated for $\SL_4(\R)$, but as the reader can see, the arguments are general enough to apply to a wide variety of Lie groups.

Let $A_\infty=\R[[t]]$ be the $\R$-algebra of formal power series in one variable over $\R$. For each $k\geq 0$ define an $\R$-algebra $A_k=A_\infty/(t^{k+1})$, $G_k=\SL_4(A_k)$, and $\g_k$ to be the Lie algebra of $G_k$. In particular, $G_0=\SL_4(\R)$, $\g_0=\sl_4(\R)$ and $\g_k\cong \g_0\otimes A_k$. If $\rho_k:\Gamma\to G_k$ is a representation then combining $\rho_k$ and the adjoint representation turns $\g_k$ into a $\Gamma$-module which we refer to as $\g_k^{\rho_k}$

Let $k>l$, then there is a projection $\pi_{k,l}:A_k\to A_l$. When $l=0$, we denote $p_k:=\pi_{k,0}$. When $l=k-1$, we denote $\pi_{k-1}:=\pi_{k,k-1}$. By restricting to coefficients we also get maps on Lie groups and Lie algebras which we abusively denote by $\pi_{k,l}:G_k\to G_l$ and $\pi_{k,l}:\g_k\to \g_l$. 

%The $\R$-algebra $A_k$ contains a unique maximal ideal $\m=(t)$ and $A_k/\m\cong A_0=\R$. More precisely, let $p_k:A_k\to \R$ be projection, then there is a short exact sequence 
%$$0\to \m\to A_k\stackrel{p_k}{\to } \R \to 0.$$
%the map $\iota:\R\to A_k$ given by $\iota(x)=x\cdot 1$ provides a section for this short exact sequence. This short exact sequence provides a split short exact sequence
When $l=0$, we get a (split) short exact sequence 
$$0\to G_k^0\to G_k\stackrel{p_k}{\to}G_0\to 0$$ 
which gives an identification $G_k\cong G_k^0\rtimes G_0$. The group $G_k^0$ is a Lie group with Lie algebra $\g_0\otimes \m_k$, where $\m_k=(t)$ is the unique maximal ideal of $A_k$. Since $\m_k^{k+1}=0$, this Lie algebra is nilpotent $\g_0\otimes \m_k$ can be turned into a Lie group using ``Campbell-Baker-Hausdorff'' multiplication, and this Lie group is isomorphic to $G_k^0$ (see \cite[\S 4]{GoMi} for more details).

We now discuss the case where $k=1$. Using the semi direct product structure on $G_1$ we see that any representation $\rho_1:\Gamma\to G_1$ can be written uniquely as $\rho_1(\gamma)=\exp(t u_1(\gamma))\rho_0(\gamma)$, where $\rho_0:\Gamma\to G_0$ is a homomorphism and $u_1:\Gamma\to \g_0$ is a function. In this setting we say that $\rho_1$ is an \emph{infinitesimal deformation of $\rho_0$}. The homomorphism condition for $\rho_1$ and the Campbell-Baker-Hausdorff formula combine to forces $u_1$ to satisfy the condition 
$$u_1(\gamma_1\gamma_2)=u_1(\gamma_1)+\rho_0(\gamma_1)\cdot u_1(\gamma_2).$$

\noindent On the other hand, it is straightforward to check that given $u\in Z^1(\Gamma,\g_0^{\rho_0})$, $\rho(\gamma)=\exp(t u(x))\rho_0(x)$ is an infinitesimal deformation. This construction gives a bijection between infinitesimal deformations of $\rho_0$ and 1-cocycles in $Z^1(\Gamma,\g_0^{\rho_0})$.

We partially generalize the previous phenomenon to show that representations into other $G_k$ are also related to cocycles in group cohomology. Let $u=\sum_{i=0}^\infty c_i t^i\in C^1(\Gamma,\g_\infty)$, where each $c_i\in C^1(\Gamma,\g_0)$. We also define the \emph{formal derivative} of $u$, which we denote by $u':=\sum_{i=1}^\infty ic_it^{i-1}\in C^1(\Gamma,\g_\infty)$. For any such cochain we can also define  $u_k=\sum_{i=1}^k c_it^i\in C^1(\Gamma,\g_k)$. Next, we can define 
$$du=u'+\frac{1}{2!}[u,u']+\frac{1}{3!}[u,[u,u']]+\ldots=\sum_{i=0}^\infty \frac{1}{(i+1)!}\ad_{u}^i(u').$$

 Regarding $du$ as a power series, it can be rewritten as $du=\frac{\exp(\ad_u)-1}{\ad_u}\left(u'\right)$. The importance of $du$ is that it appears in the formula for differentiating the exponential map. More precisely, if $u\in C^1(\Gamma,\g_\infty)$ has trivial constant term (i.e $c_0=0$) then $\frac{d}{dt}(\exp(u(\gamma)))=du(\gamma)\exp(u(\gamma))$, for any $\gamma\in \Gamma$. In what follows, we will omit $\gamma$ from our notation and just use the expression $\frac{d}{dt}(\exp(u))=du\exp(u)$. 
 
 % Another way to think about $d$ is as follows. The element $\frac{d}{dt}(\exp(u))$ can be thought of as living in the tangent space $T_{\exp(u)}G_\infty^0$, and $du$ is the result of transporting $\frac{d}{dt}(\exp(u))$ to $T_eG_\infty^0=\g_\infty$ 
 
  %The map $du$ can also be thought of as giving a map $D:G_\infty^0\to \g_\infty$ as follows: every element of $G_\infty^0$ can be written as $\exp(u)$, where $u\in \g_0\otimes \m_\infty$ (i.e. $u$ has trivial constant term) and $D(\exp(u))=du$. The product rule then gives that this map is a derivation in the sense that $D(\exp(u)\exp(v))=du+\exp(u)\cdot dv$

The map $du$ also gives rise to maps $D_k:G_{k+1}^0\to \g_k$ for each $k$, defined as follows: each $g_{k}\in G_{k+1}^0$ can be written uniquely was $\exp(u_{k})$ for some $u_{k}\in \g_{0}\otimes \m_{k+1}$. If we let $u\in \g_0\otimes \m_\infty$ be such that $\pi_{\infty,k+1}(u)=u_{k}$ then we define $D_k(g_{k})=\pi_{\infty,k}(du)$. This is well defined since if $w$ is another element projecting to $u_{k}$ then $w=u+v$, where $v\in \g_0\otimes (t^{k+2})$. It follows that $d(u+v)=u'+v'+\sum_{i=1}^\infty \frac{1}{(i+1)!}\ad_{u+v}^i(u'+v')$. Under $\pi_{\infty, k}$, $u'$ maps to $u'_{k}$, $v'$ maps to 0, and $\sum_{i=1}^\infty \frac{1}{(i+1)!}\ad_{u+v}(u'+v')$ maps to $\sum_{i=1}^k\frac{1}{(i+1)!}\ad^i_{u_{k}}(u'_{k})$. It follows that $\pi_{\infty,k}(du)=\pi_{\infty,k}(dw)$.

\begin{lemma}\label{kderivative}
	The map $D_k$ is a derivation in the sense that it satisfies the formula 
	$$D_k(\exp(u_k)\exp(v_k))=D_k(\exp(u_k))+\exp(u_k)\cdot D_k(\exp(v_k))$$
	for every $u_k,v_k\in \g_0\otimes \m_{k+1}$. 
\end{lemma}

\begin{proof}
	Let $u,v\in \g_0\otimes \m_\infty$ be elements mapping to $u_k$ and $v_k$ under $\pi_{\infty,k+1}$, let $z\in \g_0\otimes \m_\infty$ such that $\exp(z)=\exp(u)\exp(v)$ and let $z_k=\pi_{\infty,k+1}(z)$. Note that $\exp(z_k)=\exp(u_k)\exp(v_k)$. Using the product rule for differentiation we find that 
	$$\frac{d}{dt}(\exp(u)\exp(v))=du\exp(u)\exp(v)+\exp(u)dv\exp(v)=(du+\exp(u)\cdot dv)\exp(u)\exp(v)$$
	
	On the other hand we see that 
	$$\frac{d}{dt}(\exp(z))=dz\exp(z)$$
	
	By construction, these two expressions are equal and so we find that $dz=du+\exp(u)\cdot dv$. It then follows that 
	\begin{align}
		D_k(\exp(u_k)\exp(v_k))=D_k(\exp(z_k))&=\pi_{\infty,k}(dz)\nonumber\\
		=\pi_{\infty,k}(du+\exp(u)\cdot dv)=\pi_{\infty,k}(du)+\exp(u_k)\cdot \pi_{\infty,k}(dv)&=D_k(u_k)+\exp(u_k)\cdot D_k(v_k)\nonumber
	\end{align}

\end{proof}

%\begin{lemma}
%	There is a derivation $D_k:G_{k+1}^0\to \g_k$ that makes the following diagram commute
%	$$\begin{tikzcd}
%G^0_\infty \arrow[d,"\pi_{\infty, k+1}"']	\arrow[r,"D"] & \g_\infty \arrow[d,"\pi_{\infty,k}"]\\
%G^0_{k+1} \arrow[r,"D_k"] & \g_{k}
%\end{tikzcd}
%$$
%\end{lemma}

%\begin{proof}
%	Let $g_{k+1}=\exp(u_{k+1})\in G_{k+1}^0$ and let $u\in \g_0\otimes \m_\infty$ such that $\pi_{\infty,k+1}(u)=u_k$ and define $D_k(g_{k+1})=\pi_{\infty,k}\circ D(u)$. This is well defined since if $w$ is another element projecting to $u_k$ then $w=u+v$, where $v\in \g_0\otimes (t^{k+2})$. It follows that $D(u+v)=u'+v'+\sum_{i=1}^\infty \frac{1}{(i+1)!}\ad_{u+v}^i(u'+v')$. Under $\pi_{\infty, k}$, $u'$ maps to $u'_k$, $v'$ maps to 0, and $\sum_{i=1}^\infty \frac{1}{(i+1)!}\ad_{u+v}(u'+v')$ maps to $\sum_{i=1}^k\frac{1}{(i+1)!}\ad^i_{u_k}(u'_k)$. It follows that $\pi_{\infty,k}\circ D(\exp(u))=\pi_{\infty,k}\circ D(\exp(w))$. The fact that $D_k$ is a derivation follows from the fact that $D$ is a derivation and projection is $\Gamma$-module homomorphism. 
%\end{proof}

 %let $\rho_0:\Gamma\to G_0$, and suppose that $\rho_k(\gamma)=\exp(u_k(x))\rho_0(\gamma)$ is a homomorphism from $\Gamma\to G_{k}$. Define $du_k=\sum_{i=0}^k\frac{1}{(i+1)!}\ad_{u_{k-1}}^i(u'_k)$,where here $u'_k$ is the formal derivative of $u_k$ with respect to $t$ regarded as an element of $C^1(\Gamma,\g_{k-1})$. Thinking about $du_k$ as a power series we may write 
 %$$du_k=\frac{\exp(\ad_{u_{k-1}})-1}{\ad_{u_{k-1}}}(u'_{k})$$
The following lemma shows how representations into $G_k$ give rise to cocycles. 
 
\begin{lemma}\label{reptococycle}
Let $u\in C^1(\Gamma,\g_\infty)$ be a cochain with trivial constant term and let $\rho_0:\Gamma\to G_0$ be a homomorphism. If  $\rho_k:\Gamma\to G_k$ is a homomorphism of the form $\rho_k(\gamma)=\exp(u_k(\gamma))\rho_0(\gamma)$ and $\rho_{k-1}=\pi_{k-1}\circ \rho_k$, then $du_k:=\pi_{\infty,k-1}(du)\in Z^1(\Gamma,\g_{k-1}^{\rho_{k-1}})$. 	
\end{lemma}

\begin{proof}

Since $\rho_k$ is a homomorphism $\rho_k(\gamma_1\gamma_2)=\rho_k(\gamma_1)\rho_k(\gamma_2)$ and so it follows that $\exp(u_k(\gamma_1\gamma_2))=\exp(u_k(\gamma_1))\exp(\rho_0(\gamma_1)\cdot u_k(\gamma_2))$. Recalling that $du_k:=\pi_{\infty,k-1}(du)$ and applying $D_{k-1}$ to the previous equality we find that 
\begin{align*}
	du_k(\gamma_1\gamma_2)&=du_k(\gamma_1)+\exp(u_k(\gamma_1))\cdot (\rho_0(\gamma_1)\cdot du_k(\gamma_2))\\
	&=du_k(\gamma_1)+\rho_{k-1}(\gamma_1)\cdot du_k(\gamma_2)
\end{align*}

\noindent It follows that $du_k\in Z^1(\Gamma,\g_{k-1}^{\rho_{k-1}})$. 
\end{proof}

\begin{remark}\label{mistake}
	Lemma \ref{reptococycle} should be compared with Lemma 3.3 of \cite{HePoSuSL2}. However, there is a small mistake in the statement of the theorem caused by the authors having the wrong formula for $du$. After a small change in their formula the theorem and its proof are correct. The author would like to thank Joan Porti for providing him with the formula for $du$ and explaining its relationship to differentiation of exponentials. 
\end{remark}

Next, suppose that $\rho_k:\Gamma\to G_k$ is a homomorphism, $\rho_{k-1}=\pi_{k-1}\circ \rho_k$ and $\rho_0=p_k\circ \rho_k$ and observe that there is a short exact sequence of $\Gamma$-modules 
$$0\to \g_0^{\rho_0}\stackrel{\alpha_k}{\longrightarrow}\g_{k}^{\rho_k}\stackrel{\pi_{k-1}}{\longrightarrow}\g_{k-1}^{\rho_{k-1}}\to 0,$$
where $\alpha_k$ is induced by the map from $A_0\to A_k$ given by $x\mapsto t^kx$. This short exact sequence induces a long exact sequence on cohomology, a portion of which is 
\begin{align}\label{exactsequence}
H^1(\Gamma,\g_{k}^{\rho_k})\stackrel{\pi_{k-1}}{\longrightarrow}H^1(\Gamma,\g_{k-1}^{
\rho_{k-1}})\stackrel{\delta}{\longrightarrow}H^2(\Gamma,\g_0^{\rho_0})
\end{align}

We now have the tools to address the following problem. Suppose that $\rho_k:\Gamma\to G_k$ is a homomorphism and $\rho_0=p_k\circ \rho_k$. When is there a homomorphism $\rho_{k+1}:\Gamma\to G_{k+1}$ such that $\rho_k=\pi_k\circ \rho_{k+1}$? The following theorem provides the answer in the form of an obstruction (see also \cite[Prop.\ 3.1]{HePoSuSL2}). 

\begin{theorem}\label{obstructions}
	Given, $u$, $\rho_0$ and $\rho_k$ as above there is a cohomology class $o(\rho_k)\in H^2(\Gamma,\g_0^{\rho_0})$ with the property that there is a homomorphism $\rho_{k+1}:\Gamma\to G_{k+1}$ so that $\rho_k=\pi_k\circ \rho_{k+1}$ if and only if $o(\rho_k)=0$. 
	
	Furthermore the obstructions are natural in the sense that if $\Gamma'$ is another group, $f:\Gamma'\to \Gamma$ is a homomorphism, and $\rho_k'=f\circ \rho_k$ then $o(\rho_k')=f^\ast(o(\rho_k))$. 
	
	\end{theorem} 
 
 \begin{proof}
 	Write $\rho_k(\gamma)=\exp(u_k(\gamma))\rho_0(\gamma)$. By Lemma \ref{reptococycle} $[du_k]$ is an element of $H^1(\Gamma,\g_{k-1}^{\rho_{k-1}})$ and we define $o(\rho_k)=\delta([du_k])$. 
 	
 	First, suppose that there is a homomorphism $\rho_{k+1}:\Gamma\to G_{k+1}$ so that $\rho_k=\pi_k\circ \rho_{k+1}$. Write $\rho_{k+1}(\gamma)=\exp(u_{k+1}(\gamma))\rho_0(\gamma)$ where $u_{k+1}\in C^1(\Gamma,\g_{k+1})$. By Lemma \ref{reptococycle} we see that $[du_{k+1}]\in H^1(\Gamma,\g_{k}^{\rho_k})$. Furthermore, since $\rho_{k}=\pi_k\circ \rho_{k+1}$ it follows that $\pi_{k-1}[du_{k+1}]=[du_k]$. From exactness of \eqref{exactsequence} it follows that $o(\rho_k)=0$. 
 	
 	On the other hand, suppose that $o(\rho_k)=0$. Again using exactness of \eqref{exactsequence} we see that there is a class $[v]\in H^1(\Gamma,\g_{k}^{\rho_k})$ so that $\pi_{k-1}([v])=[du_k]$. Let $v\in Z^1(\Gamma,\g_k^{\rho_k})$ be a representative of this class, then $\sigma(\gamma)=\exp(s v(\gamma))\rho_k(\gamma)$ defines a homomorphism from $\Gamma$ to $\SL_4(B_k)$, where $B_k=A_k[[s]]/(s^2)$. There is a homomorphism, $g:B_k\to A_{k+1}$ given by $g(a+sb)=a+t^{k+1}b$ and this map induces a homomorphism $g:\SL_4(B_k)\to G_{k+1}$. Since $\sigma$ is a homomorphism, so is $\tilde \sigma=g\circ \sigma$. Furthermore, 
 	$$\tilde \sigma(\gamma)=\exp(t^{k+1} v(\gamma))\rho_k(\gamma)=\exp(t^{k+1}v(\gamma))\exp(u_k(\gamma))\rho_0(\gamma).$$ 
 	
 	%\noindent However, since $u_k$ has no constant term $[t^{k+1}v,u_k]=0\in \g_{k+1}$ and so 
 	
 	%$$\exp(t^{k+1}v(\gamma))\exp(u_k(\gamma))\rho_0(\gamma)=\exp(u_k(\gamma)+t^{k+1}v(\gamma))\rho_0(\gamma),$$
 	\noindent and so $\rho_k=\pi_k\circ \tilde \sigma$. 
 	
 	The last statement follows from the naturality of the long exact sequence in cohomology \eqref{exactsequence} (see \cite[Ch.\ III]{BrownCohom}. 
 \end{proof}
 
 We will need the following two lemmas from \cite{BDL} in order to prove Theorem \ref{infrigimpliessmooth}. 
 
 \begin{lemma}[Lem 3.6 in \cite{BDL}]\label{H2injective}
 Let $M$ be a finite volume hyperbolic 3-manifold with boundary $\partial=\sqcup_{i=1}^k\partial_i$ and $\Delta=\oplus_{i=1}^k\pi_1(\partial_i)$. Suppose that $M$ is infinitesimally rigid, then the map 
 $$\res_\ast:H^2(\Gamma,\g^{\rhohyp})\to H^2(\Delta,\g^{\rhohyp})$$
 is injective.	
 \end{lemma}
 
 \begin{lemma}[Lem 3.4 in \cite{BDL}]\label{boundarysmoothpoint}
 	Let $M$ be a finite volume hyperbolic 3-manifold with boundary $\partial=\sqcup_{i=1}^k\partial_i$ and $\Delta=\oplus_{i=1}^k\pi_1(\partial_i)$ then $\res(\rhohyp)$ is a smooth point of $\hom(\Delta,G)$. 
 \end{lemma}
 
 \begin{proof}[Proof of Theorem 2.3]
 	If $\rhohyp$ is a smooth point of $\hom(\Gamma,G)$ then $[\rhohyp]$ is a smooth point of $\rep(\Gamma,G)$. To see this observe that the representation $\rhohyp$ is well known to be irreducible. Irreducibility is an open condition, and so there is a neighborhood $U\subset \hom(\Gamma,G)$ consisting of irreducible representations. Since $\rhohyp$ is a smooth point we can assume that $U$ is a manifold. Let $V=G\cdot U$ be the orbit of this set, then $V$ is also a manifold and there is a simply transitive, and hence free and proper, action of $G$ on $V$. It follows that $V/G\subset \rep(\Gamma,G)$ is a manifold that contains $[\rhohyp]$ and so $[\rhohyp]$ is a smooth point of $\rep(\Gamma,G)$. 
 	
 	Thus it suffices to show that $\rho_0:=\rhohyp$ is a smooth point of $\hom(\Gamma,G)$. Let $u\in Z^1(\Gamma,\g^{\rho_0})$ be an infinitesimal deformation. Such an infinitesimal deformation gives rise to a representation $\rho_1:\Gamma\to G_1$ lifting $\rho_0$. Suppose for contradiction that $\rho_1$ cannot be lifted to a representation $\rho_\infty:\Gamma\to G_\infty$, then there must be some $k$ and a representation $\rho_k:\Gamma\to G_k$ lifting $\rho_1$ that cannot be lifted to a representation $\rho_{k+1}:\Gamma\to G_{k+1}$. By Theorem \ref{obstructions} this implies that $o_k:=o(\rho_k)\neq 0\in H^2(\Gamma,\g^{\rho_0})$. Next, let $o_k'=\res_\ast(o_k)\in H^2(\Delta,\g^{\rho_0})$. By the naturality of the obstruction (see Theorem \ref{obstructions}) it follows that $o_k'$ is the obstruction to lifting $\res(\rho_k)$ to $G_{k+1}$. Furthermore, $o_k\neq 0$ and so Lemma \ref{H2injective} implies that $o_k'\neq 0$. However, by Lemma \ref{boundarysmoothpoint} $\res(\rho_0)$ is a smooth point of $\hom(\Delta,G)$ and so it is possible to lift $\res(\rho_1)$ to $G_{k+1}$, and hence $o'_k=0$, which is a contradiction. It follows that $\rho_1$ can be lifted to $\rho_\infty:\Gamma\to G_\infty$. 
 	
 	Finally, by applying Theorem 1.2 of \cite{Art} it follows that we can find a curve of representations $\rho_t$ in $\hom(\Gamma,G)$ that is tangent to $u$ at $\rho_0$.

 	  \end{proof}

\section{The Slice}\label{slicesection}
Let $G=\SLpm(4,\R)$ and let $G_A$ the group of affine transformations of $\R^3$, both of which can be thought of as a subgroups of $\GL(4,\R)$ and let $\g$ and $\g_a$ be the corresponding Lie algebras. There is a natural injective map from $\varpi:G_A\to G$ given by $M\mapsto \abs{\det(M)}^{-1/4}$. The corresponding map  $\varpi:\g_a \to \g$ at the level of Lie algebras is given by $v\mapsto v-\frac{\tr(v)}{4}\operatorname{Id}$. If $\Gamma$ is a finitely generated group then the above injection induces an injection from $\hom(\Gamma,G_A)$ into $\hom(\Gamma,G)$.
\\\indent
Let 
$$S=\{(a,b,x_1,y_1,x_2,y_2)\in \R^6\mid y_1x_2-x_1y_2= \pm1\}$$
It is a simple exercise in differential topology to see that $S$ is a smooth 5-dimensional manifold.
\\\indent
Next, let $C=\{(a,b,x_1,y_1,x_2,y_2)\in S\mid a=b=0\}$. $C$ is a smooth 3-dimensional submanifold of $S$ and there is a function $\CS:C\to \C$ called the \emph{cusp shape function}, given by $(0,0,x_1,y_1,x_2,y_2)\mapsto \frac{x_1+iy_1}{x_2+iy_2}$. 
\\\indent
Here is some information about the calculus of $\CS$. 
\begin{lemma}\label{csderiv}
	$\CS$ is a surjective submersion of $C$ onto $\C\bs \R$. Consequently, $\{CS^{-1}(z)\mid z\in \C\bs \R\}$ gives a foliation of $C$ by smooth 1-manifolds.  
\end{lemma}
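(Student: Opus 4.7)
The plan is to pass to complex coordinates. Setting $w_1 = x_1 + iy_1$ and $w_2 = x_2 + iy_2$, the defining condition $y_1 x_2 - x_1 y_2 = \pm 1$ becomes $\Im(w_1 \bar w_2) = \pm 1$, and the cusp shape function takes the transparent form $\CS(w_1, w_2) = w_1/w_2$. Note that the constraint forces $w_2 \neq 0$, so $\CS$ is well defined on $C$; it also realizes $C$ as the disjoint union of two pieces $C_\pm$ on which $\Im(w_1 \bar w_2) = \pm 1$.

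The first step is to identify the image. From
\[
\frac{w_1}{w_2} \;=\; \frac{w_1 \bar w_2}{|w_2|^2},
\]
one reads off $\Im \CS = \pm|w_2|^{-2}$, which is always a nonzero real number, so $\CS(C) \subset \C \setminus \R$ (with $C_+$ mapping into the upper half plane and $C_-$ into the lower). For surjectivity, given any $z = u+iv$ with $v \neq 0$, I would explicitly exhibit a preimage by choosing any $w_2$ with $|w_2|^2 = 1/|v|$ and setting $w_1 = zw_2$; a direct substitution confirms $\Im(w_1 \bar w_2) = \operatorname{sgn}(v)$.

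For the submersion property the cleanest route is to factor $\CS$ through the ambient open set $\C^2 \setminus \{w_2 = 0\}$. There the projection $(w_1, w_2)\mapsto w_1/w_2$ is a holomorphic submersion whose fiber $F_z = \{(zw_2, w_2) : w_2 \in \C^\ast\}$ over $z$ is a punctured complex line. Restricting the constraint $h(w_1, w_2) := \Im(w_1 \bar w_2)$ to $F_z$ yields the function $(\Im z)\,|w_2|^2$, whose differential is nowhere zero on $F_z$ precisely because $\Im z \neq 0$. This says exactly that $C = h^{-1}(\pm 1)$ meets every fiber $F_z$ transversely inside $\C^2 \setminus \{w_2 = 0\}$, which is equivalent to $d(\CS|_C)$ being surjective at every point of $C$. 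The foliation statement is then an immediate consequence of the local form of a submersion from a smooth $3$-manifold onto a $2$-manifold.

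The only place requiring genuine care is the transversality step, but in the complex coordinates above it reduces to the elementary remark that $(\Im z)|w_2|^2$ has nowhere-vanishing differential on $\C^\ast$ whenever $\Im z \neq 0$; the rest is bookkeeping.
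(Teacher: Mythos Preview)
Your argument is correct and takes a genuinely different route from the paper's.  The paper works entirely in real coordinates: it writes $\CS = F_1\circ F_2$ with $F_2(v)=(f(v),g(v))=(x_2^2+y_2^2,\ x_1x_2+y_1y_2)$ and $F_1(f,g)=(g/f,1/f)$, computes the chain rule, and then checks by hand that $\nabla f$, $\nabla g$, and $\nabla h$ (where $h=y_1x_2-x_1y_2$) are linearly independent at every point of $C$.  You instead pass to complex coordinates $w_j=x_j+iy_j$, observe that the ambient map $(w_1,w_2)\mapsto w_1/w_2$ is already a holomorphic submersion with fibers $F_z=\{(zw_2,w_2)\}$, and reduce the submersion property of $\CS|_C$ to the transversality of $C=h^{-1}(\pm1)$ to each $F_z$; this in turn becomes the one-line observation that $h|_{F_z}=(\Im z)\,|w_2|^2$ has nonvanishing differential when $\Im z\neq 0$.

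What each approach buys: the paper's computation is self-contained and produces an explicit description of $\ker(\CS_\ast)$ as $\ker\nabla f\cap\ker\nabla g\cap\ker\nabla h$, which is used later in the proof of Proposition~\ref{sliceintersection}.  Your transversality argument is cleaner and more conceptual, and makes the image computation $\Im\CS=\pm|w_2|^{-2}$ immediate, but does not directly hand you that explicit kernel description; you would need to unwind it (the kernel of $d(\CS|_C)$ at $p$ is $T_pC\cap T_pF_z$) if it is needed downstream.
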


\begin{proof}
First, Let $f>0$, then $\CS(e/\sqrt{f},\pm\sqrt{f},1/\sqrt{f},0)=e\pm if$, and so $\C\bs \R$ is contained in the image of $\CS$. Furthermore, since $y_1x_2-x_1y_2=\pm 1$ it follows that $x_1+iy_1$ and $x_2+iy_2$ are linearly independent over $\R$ and hence $\CS(x_1,y_1,x_2,y_2)=\frac{x_1+iy_1}{x_2+iy_2}\in \C\bs \R$. 

Next, identify $\C$ with $\R^2$ in the usual way and identify $C$ with a subset of $\R^4$. If $h:\R^4\to \R$ is given by $(x_1,y_1,x_2,y_2)\mapsto y_1x_2-x_1y_2$ and $v\in C$ then $T_vC=\ker d h(v)$.

 Let $f,g:\R^4\to \R$ be given by $f(x_1,y_1,x_2,y_2)=x_2^2+y_2^2$ and $g(x_1,y_1,x_2,y_2)=x_1x_2+y_1y_2$.  Then we can write $\CS=F_1\circ F_2$ where $F_2:C\to \R^2$ is given by $v\mapsto (f(v),g(v))$ and $F_1:\R^2\to \R^2$ is given by $(f,g)\mapsto \left(\frac{g}{f},\frac{1}{f}\right)$. Let $v\in C$ and $w\in T_vC$ be a tangent vector, then using the chain rule we find that 
$$\CS_\ast(w)=\left(\frac{(f(v)d g(v)-g(v)d f(v))(w)}{f(v)^2},\frac{-d f(v)(w)}{f(v)^2}\right),$$
thus the kernel of $\CS_\ast$ is equal to $\ker d f(v)\cap \ker d g(v)\cap \ker d h(v)$. It is easy to check that $d f(v)$, $d g(v)$, and $d h(v)$ are linearly independent 1-forms for all $v\in C$ and so the kernel of $\CS_\ast$ is 1-dimensional. It follows that $\CS$ is a submersion at $v$ and hence a submersion since $v$ was arbitrary. 
\end{proof}

We now set some notation. In what follows we follow the convention that standard fonts are used for spaces of parameters and calligraphic fonts are used to denote the spaces being parameterized. We now use $S$ to parameterize families of representations of $\Z^2$ into $G_A$ and $G$. Fix once and for all a generating set $\{\gamma_1,\gamma_2\}$ for $\Z^2$. For each $s=(a,b,x_1,y_1,x_2,y_2)\in S$ we can define a representation $\rho_s:\Z^2\to G_A$ via $\rho_s(\gamma_i)=\exp(m_s^i)$, where for $i\in \{1,2\}$
\begin{equation}\label{slicerep}
	m_s^i=\begin{pmatrix}
		0 & x_i & y_i & 0\\
		0 & a x_i & 0 & x_i\\
		0 & 0 & b y_i & y_i\\
		0 & 0 & 0 & 0
	\end{pmatrix}
\end{equation}

By examining the entries of \eqref{slicerep} it is easy to see that $F:S\to \hom(\Z^1,G_A)$ given by $s\mapsto \rho_s$ is an injective immersion of $S$ into $\hom(\Z^2,G)$ whose image, which we denote $\mathcal{S}_A$, is an embedded submanifold. Let $\mathcal{C}_A$ be the submanifold of $\mathcal{S}_A$ corresponding to $C$.

%\begin{remark}\label{morez2reps}
%	The formula defining $\rho_s$ in \eqref{slicerep} is well defined for all points in $\R^6$
%\end{remark}

 There is another map $\tilde F:S\to \hom(\Z^2,G)$ given by $s\mapsto \tilde \rho_s$, where $\tilde\rho_s=\varpi\circ \rho_s$ and we denote the images of $S$ and $C$ under $\tilde F$ by $\Sc$ and $\Scusp$, respectively.  It is easy to see that $\mathcal{S}_A$ and $\Sc$ (resp.\ $\mathcal{C}_A$ and $\Scusp$) are diffeomorphic via $\rho_s\mapsto \tilde \rho_s$. If $\rho_s\in \Sc$ (or $\Sca$) then we call $s\in S$ the \emph{coordinates of $\rho_s$}.  The reason for using $\Sc$ (as opposed to $\Sca$) is that the transversality argument in Section \ref{transargument} takes place in $\SL(4,\R)$ and not $\GL(4,\R)$.

In terms of hyperbolic geometry, $\CS$ gives the cusp shape of the representation $\rho_s$ (with respect to the generating set $\{\gamma_1,\gamma_2\}$. It is well known that if $s,s'\in C$ then $\rho_s$ (resp.\ $\tilde \rho_s)$ is conjugate to $\rho_{s'}$ (resp.\ $\tilde\rho'_s$) in $G_A$ (resp.\ $G$) if and only if $\CS(s)=\CS(s')$. As a result, $S$ {\bf does not} give a parameterization of the image of $\Sc$ in $\hom(\Z^2,G)/G$ since there is redundancy coming from representations with the same cusp shape. However, we will see shortly, this is the only redundancy that arises when projecting $\Sc$ to $\hom(\Z^2,G)/G$ near $\Scusp$.

There is another way of viewing the above construction that is also useful: let 
$$x_{a,b}=\begin{pmatrix}
	0 & 1 & 0 & 0\\
	0 & a & 0 & 1\\
	0 & 0 & 0 & 0\\
	0 & 0 & 0 & 0
\end{pmatrix}\quad\ 
y_{a,b}=\begin{pmatrix}
	0 & 0 & 1 & 0\\
	0 & 0 & 0 & 0\\
	0 & 0 & b & 1\\
	0 & 0 & 0 & 0
\end{pmatrix}$$

We can view $\{x_{a,b},y_{a,b}\}$ as a basis of an abelian Lie algebra $\mathfrak{a}_{a,b}$ of an abelian Lie subgroup $A_{a,b}\subset G_A$ isomorphic to $\R^2$. If $s=(a,b,x_1,y_1,x_2,y_2)$ then the representation $\rho_s$ has image in $A_{a,b}$. Furthermore, if we let $v_1=(x_1,y_1)$ and $v_2=(x_2,y_2)$, then the defining condition for $S$ is a ``determinant'' condition that ensures that $v_1$ and $v_2$ are linearly independent vectors in $\R^2$, and so we see that the image of $\rho_s$ is always a lattice in $A_{a,b}$. The group $A_{0,0}$ is equal to $T(0)$, and so we immediately see that many representations in $\Scusp$ are holonomies of type 0 generalized cusps. The following Theorem shows that the remaining representations in $\Sc$ are also holonomies of generalized cusps.   

\begin{theorem}\label{sliceisgencuspholonomy}
	Let $\rho\in \Sc$, then $\rho$ is the holonomy of a  generalized cusp of type 0, type 1, or type 2. 
\end{theorem}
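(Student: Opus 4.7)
The plan is, for each $s=(a,b,x_1,y_1,x_2,y_2)\in S$, to exhibit an explicit element of $G_a$ conjugating $\rho_s$ into one of the codimension-$1$ subgroups $T(0)\subset T_0$, $T(\lambda)\subset T_1$, or $T(\lambda_1,\lambda_2)\subset T_2$ defined in the subsections on the three cusp types; composing with $\varpi$ then realizes $\tilde\rho_s$, up to conjugation in $G$, as the holonomy of the associated quotient $\Omega_k/(\text{image})$, which is by construction a generalized cusp of type $k$. The rank-$2$ lattice condition is automatic: since $\rho_s$ factors through the abelian Lie group $A_{a,b}\cong\R^2$ via $\rho_s(\gamma_i)=\exp(x_i x_{a,b}+y_i y_{a,b})$, the image is a rank-$2$ lattice in $A_{a,b}$ iff $(x_1,y_1)$ and $(x_2,y_2)$ are $\R$-linearly independent, which is ensured by the defining condition $y_1x_2-x_1y_2=\pm1$ of $S$. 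Everything therefore reduces to matching the Lie algebra $\mathfrak{a}_{a,b}=\text{span}(x_{a,b},y_{a,b})$ with the appropriate standard subalgebra up to conjugation.

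I would split into three cases by the vanishing pattern of $(a,b)$. If $a=b=0$, direct comparison with \eqref{type0liealg} gives $x_{0,0}=m_0(1,0,0)$ and $y_{0,0}=m_0(0,1,0)$, so $\rho_s(\gamma_i)=M_0(x_i,y_i,0)\in T(0)$ with no conjugation required, and one obtains a type $0$ cusp. If exactly one of $a,b$ is nonzero, say $a\neq0$ and $b=0$ (the other case being symmetric), then $x_{a,0}$ has a $1$-dimensional $a$-eigenspace spanned by $(1,a,0,0)^T$ while $y_{a,0}$ is nilpotent; taking as basis of $\R^4$ this eigenvector together with a Jordan basis of the $3$-dimensional joint generalized $0$-eigenspace, rescaled by $\sqrt{|a|}$ with a sign flip depending on $\sgn(a)$, conjugates $(x_{a,0},y_{a,0})$ into the Lie algebra of $T(\sqrt{|a|})\subset T_1$, which yields a type $1$ cusp.

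If $ab\neq0$, the commuting pair $(x_{a,b},y_{a,b})$ admits a simultaneous (generalized) eigenspace decomposition of $\R^4$: the joint $(a,0)$-eigenspace is spanned by $w_1=(1,a,0,0)^T$, the joint $(0,b)$-eigenspace by $w_2=e_1+be_3$, and the $2$-dimensional joint generalized $0$-eigenspace by $w_3=e_1$ and $v=(a/b)e_3+e_2-ae_4$. In the basis $(w_1,w_2,w_3,\sgn(a)\cdot v)$, the conjugated pair lies in the $3$-dimensional subalgebra of $\mathfrak{t}_2$ spanned by $e_{11},e_{22},e_{34}$, and a direct computation of coefficients shows that the resulting $2$-dimensional subspace satisfies the defining linear relation $c_3=-\lambda_1^{-2}c_1-\lambda_2^{-2}c_2$ of the Lie algebra of $T(\lambda_1,\lambda_2)$ with $\lambda_1=\sqrt{|a|}$ and $\lambda_2=|b|/\sqrt{|a|}$, so that $\lambda_1\lambda_2=|b|>0$ as required --- producing a type $2$ cusp. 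The main technical subtlety is precisely the sign check in this last case: the condition $\lambda_1\lambda_2>0$ (equivalently, that the $e_{34}$ coefficient is a \emph{negative} linear combination of the $e_{11}$ and $e_{22}$ coefficients) is what guarantees that $\Omega_2$ is properly convex, and without the $\sgn(a)$-dependent sign flip on the last basis vector the relation would come out with the wrong sign for half the $(a,b)$ values.
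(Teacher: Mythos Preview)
Your approach is essentially the same as the paper's: in each of the three cases determined by the vanishing pattern of $(a,b)$, exhibit an explicit conjugacy carrying $A_{a,b}$ into $T(0)$, $T(\lambda)$, or $T(\lambda_1,\lambda_2)$. The paper simply writes down explicit conjugating matrices $\tilde C_a$ and $\tilde D_{a,b}$ and verifies the identities $\tilde C_a N_{a,0}(x,y)\tilde C_a^{-1}=M_1(ax,y,-a^{-1}x)$ and $\tilde D_{a,b}N_{a,b}(x,y)\tilde D_{a,b}^{-1}=M_2(ax,by,-a^{-1}x-b^{-1}y)$ by direct computation, whereas you derive the change of basis from the simultaneous (generalized) eigenspace decomposition; these are two presentations of the same linear algebra.

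One small correction in your last case: with the basis $(w_1,w_2,w_3,v)$ as you wrote it, one computes $x_{a,b}v=w_3$ and $y_{a,b}v=(a/b)w_3$, so the $e_{34}$-coefficient in $x\,x_{a,b}+y\,y_{a,b}$ is $x+(a/b)y = a^{-1}X + (a/b^2)Y$ with $X=ax$, $Y=by$. To get this to equal $-\lambda_1^{-2}X-\lambda_2^{-2}Y$ you need the sign flip to be $-\sgn(a)$ rather than $\sgn(a)$ on the last basis vector; as written, your choice produces positive coefficients in both cases. This is a minor slip in an otherwise correct argument, and in fact you are more explicit than the paper about the $\lambda_1\lambda_2>0$ constraint (the paper's conjugacy lands in $T(a,b)$, which equals $T(|a|,|b|)$ as a subgroup since $T(\lambda_1,\lambda_2)$ depends only on $\lambda_1^2,\lambda_2^2$, but the paper does not spell this out).
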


\begin{proof}

If $\rho\in \Scusp$ then the image of $\rho$ is a lattice in $T(0)$ and so $\rho$ is the holonomy of a type 0 generalized cusp. On the other hand, suppose that $s=(a,b,x_1,y_1,x_2,y_2)\in S$ is such that $(a,b)\neq(0,0)$. There are two cases: either $a$ or $b$ (but not both) is zero or both $a$ and $b$ are non-zero. We begin with the first case. By performing a conjugacy that permutes the second and third coordinates, if necessary, we can assume without loss of generality that $b=0$. Let
$$N_{a,b}(x,y)=\exp\begin{pmatrix}
	0 & x & y & 0\\
	0 & a x & 0 & x\\
	0 & 0 & b y & y\\
	0 & 0 & 0 & 0
\end{pmatrix}$$
We regard $N_{a,b}(x,y)$ as an arbitrary element of the Lie group $A_{a,b}$. Next, let $P_{(12)}$ be the $4\times 4$ matrix that permutes the first two coordinates, let 
$$C_a=\begin{pmatrix}
	1 & -1/a & 0 & 0\\
	0 & a & 0 & 1\\
	0 & 0 & 1 & 0\\
	0 & 0 & 0 & 1
\end{pmatrix},$$
and let $\tilde C_a=P_{(12)}C_a$.  Observe that 
$$\tilde C_aN_{a,0}(x,y)\tilde C_a^{-1}=M_1(ax, y,-a^{-1}x).$$
As a result we see that $A_{a,0}$ is conjugate to $T(a)$ and that the image of $\tilde C_a\cdot \rho$ is a lattice in $T(a)$. It follows that $\rho$ is the holonomy of a type 1 generalized cusp. 

In the case where both $a$ and $b$ are non-zero let 
$$D_{a,b}=\begin{pmatrix}
	1 & -1/a & -1/b & 0\\
	0 & a & 0 & 1\\
	0 & 0 & b & 1\\
	0 & 0 & 0 & 1
\end{pmatrix},$$
let $P_{(123)}$ be the $4\times 4$ matrix that cyclically permutes the first 3 coordinates, and let $\tilde D_{a,b}=P_{(123)}D_{a,b}$. In this case we find that 
$$\tilde D_{a,b}N_{a,b}(x,y)\tilde D_{a,b}^{-1}=M_2(a x,by,-a^{-1}x-b^{-1}y),$$
and thus $A_{a,b}$ is conjugate to $T(a,b)$. Arguing as before this implies that $\tilde D_{a,b}\cdot \rho$ is the holonomy of a type 2 generalized cusp.

\end{proof}

We now describe the tangent spaces for $\Sc$. Since $\Sc$ is a subvariety of $\hom(\Z^2,G)$, its tangent space is naturally a subspace of the Zariski tangent space of $\hom(\Z^2,G)$. For simplicity of notation we will denote $T_{\rho_s}\Sc$ by $T_s\Sc$. The tangent bundle $TS$ is spanned by 5 vector fields each of which can be written as a linear combination of the vector fields $\frac{\partial}{\partial a}$, $\frac{\partial}{\partial b}$, $\frac{\partial}{\partial x_1}$, $\frac{\partial}{\partial y_1}$, $\frac{\partial}{\partial x_2}$, and $\frac{\partial}{\partial y_2}$. Using $\tilde F$ we can push these vector fields on $T\Sc$, which by abuse of notation we give the same names. Again, these vector fields pointwise span $T\Sc$.

 Recall from Section \ref{infdefs} that $T_\rho \hom(\Z^2,G)$ can be identified with the space $Z^1(\Z^2,\g^{\rho})$ of 1-cocycles with coefficients in $\g$ twisted by $\Ad(\rho)$. It is possible to regard the elements of $\left\{\frac{\partial}{\partial a},\frac{\partial}{\partial b},\frac{\partial}{\partial x_1},\frac{\partial}{\partial y_1},\frac{\partial}{\partial x_2},\frac{\partial}{\partial y_2}\right\}$ as 1-cocycles in $Z^1(\Z^2,\g^{\rho_s})$.  When $s\in C$ this procedure can be made quite explicit by describing how each partial derivative acts on a generating set. Recall that $\{\gamma_1,\gamma_2\}$ is a fixed choice of generating set for $\Z^2$. Before proceeding, we need the following Lemma
 
 \begin{lemma}\label{vcoboundary}
 	Let $s\in C$ and let $u_1,u_2\in \R$. Then there is a cocycle $z\in Z^1(\Z^2,\v^{\rho_s})$ with the property that 
 	$$z(\gamma_i)=\begin{pmatrix}
 		0 & 0 & 0 & u_i\\
 		0 & 0 & 0 & 0\\
 		0 & 0 & 0 & 0\\
 		0 & 0 & 0 & 0
 	\end{pmatrix}.$$
 	Furthermore, this cocycle is a coboundary. 
 \end{lemma}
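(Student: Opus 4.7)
The plan is to construct $z$ explicitly as a coboundary, which will simultaneously establish existence of the cocycle and the coboundary claim. The key preliminary observation is that at $s \in \Scusp$ the image $\rho_s(\Z^2)$ lies in $\SO(3,1)$ (one checks directly that $P^t J P = J$ for $P = I + X + \tfrac{1}{2}X^2$), so $\Ad(\rho_s(\gamma))$ preserves the splitting $\g = \so(3,1) \oplus \v$ of \eqref{slsplitting}. Consequently any coboundary built from a $w \in \v$ automatically takes values in $\v$, so it suffices to find $w \in \v$ with the correct coboundary values on the two generators.

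The heart of the argument is to exhibit a two-parameter family in $\v$ whose coboundary lands controllably on the line $\R E_{14}$. As ansatz I would take
\[
w = \alpha e_1 + \beta e_2, \qquad e_1 := E_{12} - E_{24}, \quad e_2 := E_{13} - E_{34}.
\]
A quick check of the defining equation $J a^t J = a$ for $\v$ confirms $E_{14}, e_1, e_2 \in \v$. A short direct conjugation calculation, using that $\rho_s(\gamma) = I + X + \tfrac{1}{2}X^2$ and $\rho_s(\gamma)^{-1} = I - X + \tfrac{1}{2}X^2$ for the nilpotent $X = x(E_{12}+E_{24}) + y(E_{13}+E_{34})$ with $(x,y)$ the coordinates of $\gamma$, gives
\[
e_1 - \rho_s(\gamma) e_1 \rho_s(\gamma)^{-1} = 2x\, E_{14}, \qquad e_2 - \rho_s(\gamma) e_2 \rho_s(\gamma)^{-1} = 2y\, E_{14}.
\]
(The second identity follows from the first by the evident symmetry $2 \leftrightarrow 3$, $x \leftrightarrow y$.) Hence the coboundary of $w$ evaluated at $\gamma_i$ equals $2(\alpha x_i + \beta y_i)\, E_{14}$.

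Finally, I would solve the $2 \times 2$ linear system $2\alpha x_i + 2\beta y_i = u_i$ for $i=1,2$. This is where the hypothesis $s \in \Scusp \subset S$ enters crucially: the determinant of the system is $4(x_1 y_2 - x_2 y_1) = \mp 4 \neq 0$ by the defining condition $y_1 x_2 - x_1 y_2 = \pm 1$ of $S$. The resulting unique $(\alpha, \beta)$ determines $w$, and the associated coboundary $z(\gamma) := w - \rho_s(\gamma) w \rho_s(\gamma)^{-1}$ is the desired element of $Z^1_{\rho_s}(\Z^2, \v)$. There is no serious obstacle here; the only point requiring care is the bookkeeping of the conjugation identities above, which is purely mechanical, and the verification that the ansatz $e_1, e_2$ lies in $\v$ (as opposed to, say, $\so(3,1)$, where the analogous matrices $E_{12} + E_{24}$ and $E_{13} + E_{34}$ would fail the sign condition).
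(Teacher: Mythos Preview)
Your proposal is correct and takes essentially the same approach as the paper: the element $w = \alpha e_1 + \beta e_2$ is exactly the paper's $v$ with $v_1 = \alpha$, $v_2 = \beta$, the coboundary computation yields the same $2(\alpha x_i + \beta y_i)E_{14}$, and both solve the identical $2\times 2$ system using the defining condition of $S$. Your explicit remark that $\rho_s(\Z^2) \subset \SO(3,1)$ forces the coboundary to lie in $\v$ is a helpful clarification the paper leaves implicit.
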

\begin{proof}
It is easy to check that for any $v_1,v_2\in \R$ that 
$$v=\begin{pmatrix}
0 & v_1 & v_2 & 0\\
0 & 0 & 0 & -v_1\\
0 & 0 & 0 & -v_2\\
0 & 0 & 0 & 0 	
\end{pmatrix}
$$	
is an element of $\v$. Since $s\in C$ we can write $s=(0,0,x_1,y_1,x_2,y_2)$, and so there is a coboundary in $B^1(\Z^2,\v^{\rho_s})$ that maps $\gamma_i$ to 
$$v-\rho_s(\gamma_i)\cdot v=\begin{pmatrix}
	0 & 0 & 0 & 2(v_1x_i+v_2y_i)\\
	0 & 0 & 0 & 0\\
	0 & 0 & 0 & 0\\
	0 & 0 & 0 & 0
	\end{pmatrix}. $$
	Since $s\in S$ it is possible to find $v_1$ and $v_2$ that satisfy the equations
	\begin{equation*}
	\begin{matrix}
		2v_1x_1+2v_2y_1=u_1\\
		2v_1x_2+2v_2y_2=u_2
	\end{matrix}	
	\end{equation*}
Thus there is a cocycle with the required properties and this cocycle is a coboundary. 
\end{proof}

In order to compute the cocycles, we observe that $\rho_s(\gamma_i)=\exp(m_s^i),$
where $m_s^i$ is the matrix from \eqref{slicerep}. Note that if $s\in  C$ then $m_s^i$ is  nilpotent (its third power is 0). This is convenient since when one writes $\exp(m_s^i)$ as a power series and and takes partial derivatives there will be only finitely many non-zero terms.  More specifically, let $z\in\{a,b,x_1,y_1,x_2,y_2\}$ and $D:=D_z^i$ be the derivative of $m:=m_s^i$ with respect $z$ at $s$ then the partial derivative of $\rho_s(\gamma_i)$ with respect to $z$ at $s$ can be computed by differentiating the power series defining $\rho_s(\gamma_i)$ term by term. This procedure results in the following formula: 

\begin{equation}\label{derivFormula}
	D+\frac{1}{2}\left(D m+m D\right)+\frac{1}{6}\left(m^2D+mDm+Dm^2\right)+\frac{1}{24}\left(m^2 D m+m D m^2\right)+\frac{1}{120} m^2 D m^2.
\end{equation}
Here, all other terms vanish since $m^3=0$. 
	
We can now compute the relevant cocycles. We begin with $\frac{\partial }{\partial x_1}$. Since $\rho_s(\gamma_2)$ is independent of $x_1$ it follows that $\frac{\partial}{\partial x_1}(\gamma_2)=0$. Next, by combining equations \eqref{cocycleFormula} and \eqref{derivFormula}, it follows that $\frac{\partial }{\partial x_1}(\gamma_1)=\xi_1$, where

$$\xi_1=\begin{pmatrix}
	0 & 1 & 0 & 0\\
	0 & 0 & 0 & 1\\
	0 & 0 & 0 & 0\\
	0 & 0 & 0 & 0
\end{pmatrix}
.$$

 The element $\xi_1$ is easily checked to be in $\so(3,1)$ and so it follows that the image $\left[\frac{\partial}{\partial x_1}\right]$ of $\frac{\partial}{\partial x_1}$ in $H^1(\Z^2,\g^{\rho_s})$ is contained in $H^1(\Z^2,\so(3,1)^{\rho_s})$. Similar computations shows that $\left[\frac{\partial}{\partial y_1}\right]$, $\left[\frac{\partial}{\partial x_2}\right]$, and $\left[\frac{\partial}{\partial y_2}\right]$ are also contained in $H^1(\Z^2,\so(3,1)^{\rho_s})$.

Next, using equations \eqref{cocycleFormula} and \eqref{derivFormula}, it follows that $\frac{\partial}{\partial a}(\gamma_i)=\alpha_i$ where
\begin{equation}\label{alphamat}
\alpha_i=\begin{pmatrix}
	-\frac{x_i}{4} & \frac{x_i^2}{4} & 0 & 0\\
	0 & \frac{3 x_i}{4} & 0 & -\frac{x_i^2}{4}\\
	0 & 0 & -\frac{x_i}{4} & 0\\
	0 & 0 & 0 & -\frac{x_i}{4}
\end{pmatrix}+
\begin{pmatrix}
	0 & 0 & 0 & -\frac{x_i^3}{3}\\
	0 & 0 & 0 & 0\\
	0 & 0 & 0 & 0\\
	0 & 0 & 0 & 0
\end{pmatrix}
\end{equation}

A similar computation shows that $\frac{\partial}{\partial b}(\gamma_i)=\beta_i$ where
\begin{equation}\label{betamat}
\beta_i=\begin{pmatrix}
	-\frac{y_i}{4} & 0 &  \frac{y_i^2}{4} & 0\\
	0 & -\frac{ y_i}{4} & 0 & 0\\
	0 & 0 & \frac{3y_i}{4} & -\frac{y_i^2}{4}\\
	0 & 0 & 0 & -\frac{y_i}{4}
\end{pmatrix}+
\begin{pmatrix}
	0 & 0 & 0 & -\frac{y_i^3}{3}\\
	0 & 0 & 0 & 0\\
	0 & 0 & 0 & 0\\
	0 & 0 & 0 & 0
\end{pmatrix}
\end{equation}

It follows that both $\frac{\partial}{\partial a}$ and $\frac{\partial}{\partial b}$ are elements of $Z^1(\Z^2,\v^{\rho_s})$. Note that the in the formulas for \eqref{alphamat} and \eqref{betamat} that the second terms give rise to a cocycle (see Lemma \ref{vcoboundary}), and so when passing to cohomology, we can simplify our formulas. More precisely, there are coboundaries $z_a$ and $z_b$ such that  $z_a(\gamma_i)=\frac{x_i^3}{3}e_1\otimes e_4^\ast$ and $z_b(\gamma_i)=\frac{y_i^3}{3}e_1\otimes e_4^\ast$,  (here $\{e_1,\ldots,e_4\}$ is the standard basis for $\R^4$). Next, define $D_a=\frac{\partial}{\partial a}+z_a$ and $D_b=\frac{\partial}{\partial b}+z_b$. Since $z_a$ and $z_b$ are coboundaries we see that $\left[\frac{\partial}{\partial a}\right]=[D_a]$ and $\left[\frac{\partial}{\partial b}\right]=[D_b]$, however the formulas for $D_a$ and $D_b$ are simpler. The next result shows that $\{[D_a],[D_b]\}$ is a basis for $H^1(\Z^2,\v^{\rho_s})$.

 %Next, let $\alpha'_i$ and $\beta'_i$ be the first terms in \eqref{alphamat} and \eqref{betamat}, then it follows from Lemma \ref{vcoboundary}, that  $\left[\frac{\partial}{\partial a}\right]=[D_a]$ and $\left[\frac{\partial}{\partial b}\right]=[D_b]$, where $D_a(\gamma_i)=\alpha'_i$ and $D_b(\gamma_i)=\beta'_i$. 

\begin{lemma}\label{vbasis}
	Let $s\in C$, then $\{[D_a],[D_b]\}$ is a basis for $H^1(\Z^2,\v^{\rho_s})$. 
\end{lemma}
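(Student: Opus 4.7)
The plan is to combine Lemma \ref{vcohomology} with a direct calculation. Since a single cusp subgroup $\Delta_i \cong \Z^2$ of a hyperbolic $3$-manifold has $\dim H^1_{\rhohyp}(\Delta_i,\v) = 2$ by that lemma, and since for $s \in \Scusp$ the representation $\rho_s$ has image a lattice in $T(0)$ (hence is conjugate in $G$ to the restriction of some $\rhohyp$ to a peripheral subgroup), we conclude $\dim H^1_{\rho_s}(\Z^2,\v) = 2$. So it suffices to prove that $[D_a]$ and $[D_b]$ are linearly independent.

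To verify linear independence, suppose for contradiction that $c_1 [D_a] + c_2 [D_b] = 0$ in $H^1_{\rho_s}(\Z^2,\v)$. Then there exists $v \in \v$ with
\begin{equation*}
c_1 \alpha'_i + c_2 \beta'_i \;=\; v - \Ad(\rho_s(\gamma_i))\, v \qquad (i = 1, 2).
\end{equation*}
Since $s \in \Scusp$, we have $\rho_s(\gamma_i) = \exp m_0(x_i,y_i,0)$ with $m_0(x_i,y_i,0)$ nilpotent of index three, so
\begin{equation*}
\Ad(\rho_s(\gamma_i))\, v \;=\; v + [m_0(x_i,y_i,0), v] + \tfrac{1}{2}\bigl[m_0(x_i,y_i,0),[m_0(x_i,y_i,0), v]\bigr].
\end{equation*}

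The idea is to read off the four diagonal entries of both sides of the cocycle equation. The left-hand side contributes the explicit diagonal matrices $c_1 \alpha'_i + c_2 \beta'_i$ from \eqref{middlemat}, while the diagonal of the right-hand side depends only on a short list of off-diagonal entries of $v$ (those paired with the nonzero entries of $m_0(x_i,y_i,0)$) and is linear in $(x_i, y_i)$. Using the constraint that $v$ lies in the $9$-dimensional subspace $\v$ (which imposes symmetry relations tying together the relevant off-diagonal entries of $v$), this produces, for each $i=1,2$, four scalar equations relating $c_1$, $c_2$, and a fixed finite collection of entries of $v$. The crucial point is that the determinant condition $y_1 x_2 - x_1 y_2 = \pm 1$ guarantees $(x_1,y_1)$ and $(x_2,y_2)$ are linearly independent, so combining the $i=1$ and $i=2$ systems yields enough equations to eliminate the entries of $v$ and force $c_1 = c_2 = 0$.

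The main obstacle is the linear-algebra bookkeeping: one must explicitly parameterize $\v$, compute which entries of a general $v \in \v$ appear in the diagonal of $[m_0(x_i,y_i,0), v]$ and of the iterated bracket, and verify that the resulting rectangular linear system in $(c_1, c_2)$ together with the entries of $v$ has no nontrivial solution with $(c_1,c_2) \neq 0$. The computation is streamlined by choosing a convenient $s \in \Scusp$ (for example with $x_1=y_2=1$, $y_1 = x_2 = 0$, which still lies in $\Scusp$) to first check independence at one point; since $[D_a]$ and $[D_b]$ vary continuously in $s$ and cohomology dimension is upper semicontinuous, independence at a single point suffices in a neighborhood, and a symmetry argument (or repeating the calculation generically) extends it to all $s \in \Scusp$.
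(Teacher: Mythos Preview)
Your approach is essentially the same as the paper's: set up the coboundary equation $c_1\alpha'_i+c_2\beta'_i=v-\rho_s(\gamma_i)\cdot v$, read off diagonal entries, and use the structure of $\v$ to force $c_1=c_2=0$. The paper carries this out directly for arbitrary $s\in\Scusp$: it parameterizes a generic $u\in\v$ by nine real numbers, writes out $u-\rho_s(\gamma_i)\cdot u$, observes that the off-diagonal entries must vanish (since $D_a,D_b$ take diagonal values), and finds immediately that three specific entries of $u$ (the ones you would call $u_5,u_8,u_9$) must be zero, after which the diagonal contribution vanishes and $c_1=c_2=0$. The whole computation fits in a few lines.

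Where your proposal falls short is that you describe this plan but do not execute it, and then attempt to replace the general computation with a shortcut that does not close. Checking independence at a single convenient point such as $(x_1,y_1,x_2,y_2)=(1,0,0,1)$ and invoking upper semicontinuity only yields independence on an open neighborhood of that point; it does not extend to all of $\Scusp$ (which is not even connected). Your appeal to ``a symmetry argument (or repeating the calculation generically)'' is not a proof: you have not identified any symmetry that carries one point of $\Scusp$ to every other while respecting $[D_a],[D_b]$, and ``repeating the calculation generically'' is precisely the direct computation you were trying to avoid. Since that direct computation is short and uniform in $s$, the cleanest fix is simply to write out the generic element of $\v$, compute the coboundary, and read off the vanishing as the paper does.
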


\begin{proof}
	From Lemma \ref{vcohomology} it follows that $H^1(\Z^2,\v^{\rho_s})$ is 2-dimensional, so it suffices to show that $[D_a]$ and $[D_b]$ are linearly independent.  Let $s=(0,0,x_1,y_1,x_2,y_2)$ and suppose that there are $c_a$, $c_b$ and $u\in \v$ so that for any $\gamma\in \Z^2$, 
\begin{equation}\label{vbasiseq}
	c_aD_a(\gamma)+c_bD_b(\gamma)=u-\rho_s(\gamma)\cdot u
\end{equation}
  An arbitrary element of $u\in \v$ is of the form 
	$$u=\begin{pmatrix}
		-\frac{u_5+u_8}{2} & u_1 & u_2 & u_3\\
		u_4 & u_5 & u_6 & -u_1\\
		u_7 & u_6 & u_8 & -u_2\\
		u_9 & -u_4 & -u_7 & -\frac{u_5+u_8}{2}
	\end{pmatrix}$$
	and so $u-\rho_s(\gamma_i)\cdot u$ is
	$$\begin{pmatrix}
		-u_4x_i-u_7y_i-\frac{1}{2}u_{9}(x_i^2+y_i^2) & \ast & \ast & \ast\\
		-u_{9}x_i & x_i(2u_4+u_9x_i) & u_7x_i+(u_4+u_9x_i)y_i & \ast\\
		-u_9x_i & u_7x_i+(u_4+u_9x_i)y_i & y_i(2u_7 +u_9y_i) & \ast\\
		\ast & \ast & \ast & -u_4x_i-u_7y_i-\frac{1}{2}u_{9}(x_i^2+y_i^2)
	\end{pmatrix}$$
	The image of both $D_a$ and $D_b$ consist entirely of upper triangular elements of $\v$. It follows that the only way that \eqref{vbasiseq} can be satisfied is if $u_4=u_7=u_9=0$, and hence $c_a=c_b=0$
\end{proof}

Using the above description allows us to prove some useful intersection properties of $T_s\Sc$ when $s\in \Scusp$. 
\begin{proposition}\label{cuspshapetang}
	Let $s\in C$ and let $V$ and $W$ be the images of $T_s\Sc$ and $T_s\Scusp$ in $H^1(\Z^2,\g^{\rho_s})$, respectively. Then 
	$$H^1(\Z^2,\so(3,1)^{\rho_s})\cap V=W$$
	
\end{proposition}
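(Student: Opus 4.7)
The plan is to exploit the product-like structure of $\Sc$ near $\Scusp$, combined with the explicit cocycle calculations carried out immediately before the statement and Lemma \ref{vbasis}. The main obstacle is the reverse inclusion $V\cap H^1_{\rho_s}(\Z^2,\so(3,1))\subseteq W$; the forward inclusion is essentially a restatement of the fact that the four ``coordinate'' infinitesimal deformations $\frac{\partial}{\partial x_1},\frac{\partial}{\partial y_1},\frac{\partial}{\partial x_2},\frac{\partial}{\partial y_2}$ are each represented by cocycles with values in $\so(3,1)$, which was already shown.

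For the reverse inclusion, I would first note that since $\Scusp$ is cut out of $\Sc$ by the two coordinate functions $a$ and $b$, at any $s\in\Scusp$ there is a direct sum decomposition
$$T_s\Sc=T_s\Scusp\oplus\R\cdot\tfrac{\partial}{\partial a}\oplus\R\cdot\tfrac{\partial}{\partial b}.$$
Given a class $[z]\in V\cap H^1_{\rho_s}(\Z^2,\so(3,1))$, I would lift it to a tangent vector in $T_s\Sc$ and decompose to get
$$[z]=[v]+c_a\left[\tfrac{\partial}{\partial a}\right]+c_b\left[\tfrac{\partial}{\partial b}\right]$$
with $[v]\in W$. Applying $\pi_\v$ kills both $[z]$ (by hypothesis) and $[v]$ (by the forward inclusion), so
$$c_a[D_a]+c_b[D_b]=0$$
in $H^1_{\rho_s}(\Z^2,\v)$, using the very definition of $[D_a]$ and $[D_b]$ as the $\v$-components of $\left[\tfrac{\partial}{\partial a}\right]$ and $\left[\tfrac{\partial}{\partial b}\right]$.

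The final step, and the one on which the whole argument hinges, is to invoke Lemma \ref{vbasis}, which asserts that $\{[D_a],[D_b]\}$ is linearly independent in $H^1_{\rho_s}(\Z^2,\v)$. This forces $c_a=c_b=0$, and hence $[z]=[v]\in W$, completing the reverse inclusion. So the real content of the proposition is packaged into Lemma \ref{vbasis}; once that is in hand, the remainder of the proof is a direct piece of tangent-space bookkeeping with the explicit cocycle formulas \eqref{alphamat}, \eqref{betamat}, and \eqref{middlemat}.
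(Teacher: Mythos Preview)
Your proposal is correct and follows essentially the same approach as the paper's proof: both use the splitting $T_s\Sc=T_s\Scusp\oplus\R\tfrac{\partial}{\partial a}\oplus\R\tfrac{\partial}{\partial b}$, the fact that the $x_i,y_i$-directions land in $H^1_{\rho_s}(\Z^2,\so(3,1))$, and then project to $\v$ and invoke Lemma~\ref{vbasis} to force the $a,b$-coefficients to vanish. Your observation that the real content is packaged in Lemma~\ref{vbasis} is exactly right.
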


\begin{proof}
The tangent space $T_s\Scusp$ is a subspace of $\left<\frac{\partial}{\partial x_1},\frac{\partial}{\partial y_1},\frac{\partial}{\partial x_2},\frac{\partial}{\partial y_2}\right>$. From the previous paragraph, we know that $\left[\frac{\partial}{\partial x_1}\right],\left[\frac{\partial}{\partial y_1}\right],\left[\frac{\partial}{\partial x_2}\right],\left[\frac{\partial}{\partial y_2}\right]\in H^1_{\rho_s}(\Z^2,\so(3,1)^{\rhohyp})$. It follows that
$$W\subset H^1(\Z^2,\so(3,1)^{\rho_s}).$$

Next, let $v\in V$ and write 
$$v=\dot a\left[D_a\right]+\dot b\left[D_b\right]+w,$$
where $w\in W$. Since $\g=\so(3,1)\oplus \v$ (see \eqref{cohomsplitting}) it follows from Lemma \ref{vbasis} that $v\in H^1(\Z^2,\so(3,1)^{\rho_s})$ if an only if $\dot a=\dot b=0$, or in other words if $v\in W$.

\end{proof}

The following proposition shows that at the level of tangent spaces the only redundancy in $\Sc$ up to conjugacy comes from representations having the same cusp shape.

\begin{proposition}\label{sliceintersection}
	Let $s\in C$, $z=\CS(s)$, $C_z=\CS^{-1}(z)$, and $\Scusp_z$ be the image of $C_z$ under $F$ then 
	$$B^1(\Z^2,\g^{\rho_s})\cap T_s\Sc=T_s\Scusp_z.$$  
\end{proposition}
\begin{proof}

	Let $w\in B^1(\Z^2,\g^{\rho_s})\cap T_s\Sc$ and write 
	$$w=\dot{a}\frac{\partial}{\partial a}+\dot{b}\frac{\partial}{\partial b}+\dot{x_1}\frac{\partial}{\partial x_1}+\dot{y_1}\frac{\partial}{\partial y_1}+\dot{x_2}\frac{\partial}{\partial x_2}+\dot{y_2}\frac{\partial}{\partial y_2}$$
	
	Suppose for the sake of contradiction that $\dot a\neq 0$. Looking at the (2,2) entries of $\tilde\rho_s(\gamma_1)$ and $\tilde\rho_s(\gamma_2)$ we see that $e^{3ax_1/4}$ and $e^{3ax_2/4}$ are eigenvalues of the respective matrices. Since $w$ is tangent to a conjugacy path we see that $e^{3ax_1/4}$ and $e^{3ax_2/4}$ must remain constant up to first order. Since $s\in C$ this implies that $\dot ax_1=\dot a x_2=0$. Since $\dot a\neq 0$ this implies that $x_1=x_2=0$. However this contradicts the fact that $s\in C\subset S$, and so $\dot a=0$. A similar argument shows that $\dot b=0$.
	
	Since $\dot a=\dot b=0$ it follows that $w\in T_s\Scusp$. As previously mentioned, $\CS$ is a conjugacy invariant and it follows that $\CS$ is constant to first order in the direction of $w$, and so $w\in T_s\Scusp_z$.  
	
	On the other hand, suppose that $w\in T_s\Scusp_z$. Clearly, $w\in T_s\Sc$, and so we must show that $w$ is tangent to a path of conjugations.
	 By conjugating $\tilde\rho_s$ by a rotation, we can assume without loss of generality that $s=(0,0,x_1,1/x_2,x_2,0)$. From the proof of Lemma \ref{csderiv} we see that $w\in \ker\nabla f(s)\cap \ker \nabla g(s)\cap \ker \nabla h(s)$, and computing the relevant derivatives gives 
	$w=c(0,0,-1/x_2,x_1,0,x_2),$
	for some $c\in \R$. Next, let 
	$$R_\theta=\begin{pmatrix}
		1 & 0 & 0 & 0\\
		0 & \cos \theta & -\sin \theta & 0\\
		0 & \sin \theta & \cos \theta & 0\\
		0 & 0 & 0 & 1
	\end{pmatrix}.$$
	Conjugating by $R_{c\theta}$ and taking the derivative with respect to $\theta$ at 0 gives
	$$\left .\frac{d}{d\theta}\right|_{\theta=0}R_{c\theta}\rho_s(\gamma_1)R_{c\theta}^{-1}=\begin{pmatrix}
		0 & -c/x_2 & cx_1 & 0\\
		0 & 0 & 0 & -c/x_2\\
		0 & 0 & 0 & cx_1\\
		0 & 0 & 0 & 0
	\end{pmatrix},\ \left .\frac{d}{d\theta}\right|_{\theta=0}R_{c\theta}\rho_s(\gamma_2)R_{c\theta}^{-1}=\begin{pmatrix}
		0 & 0 & cx_2 & 0\\
		0 & 0 & 0 & 0\\
		0 & 0 & 0 & cx_2\\
		0 & 0 & 0 & 0
	\end{pmatrix}.$$
	As a result we see that the tangent vector to this conjugacy path is
	 $$(\dot a,\dot b,\dot x_1,\dot y_1,\dot x_2,\dot y_2)=(0,0,-c/x_2,cx_1,0,cx_2)=w.$$
	 Thus we see that $w$ is the tangent vector to a path of conjugations and so $w\in B^1(\Z^2,\g^{\rho_s})\cap T_s\Sc$ 
\end{proof}

Proposition \ref{sliceintersection} has the following immediate corollary.
\begin{corollary}\label{sliceinhomol}
	If $s\in C$ the image of $T_s\Sc$ in $H^1(\Z^2,\g^{\rho_s})$ is 4-dimensional. 
\end{corollary}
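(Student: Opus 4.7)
The plan is to combine the dimension count for $\Sc$ with Proposition \ref{sliceintersection} and the submersion property of $\CS$ from Lemma \ref{csderiv}. The image of $T_s\Sc$ in $H^1_{\rho_s}(\Z^2,\g)$ is, by definition of the cohomology, the quotient $T_s\Sc / (B^1_{\rho_s}(\Z^2,\g) \cap T_s\Sc)$, so its dimension equals $\dim T_s\Sc - \dim(B^1_{\rho_s}(\Z^2,\g) \cap T_s\Sc)$.

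First, since $\tilde F : S \to \hom(\Z^2,G)$ is an injective immersion with image $\Sc$ and $S$ is a $5$-dimensional smooth manifold, we have $\dim T_s\Sc = 5$ at every point $s$, in particular when $s\in \Scusp$.

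Next, Proposition \ref{sliceintersection} identifies the intersection with coboundaries: $B^1_{\rho_s}(\Z^2,\g) \cap T_s\Sc = T_s\Scusp_z$, where $z=\CS(s)$ and $\Scusp_z = \CS^{-1}(z)$. I would then invoke Lemma \ref{csderiv}, which tells us that $\CS : \Scusp \to \C\setminus\R$ is a submersion. Since $\Scusp$ is $3$-dimensional and $\C\setminus\R$ is $2$-dimensional, the fibers $\Scusp_z$ are smooth $1$-dimensional submanifolds, so $\dim T_s\Scusp_z = 1$.

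Putting these pieces together gives $\dim T_s\Sc - \dim(B^1_{\rho_s}(\Z^2,\g) \cap T_s\Sc) = 5 - 1 = 4$, which is the claimed dimension. There is essentially no obstacle here once Proposition \ref{sliceintersection} and Lemma \ref{csderiv} are in hand; the entire content is the elementary dimension arithmetic, so the proof is a short two or three sentences in practice.
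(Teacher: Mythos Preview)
Your proposal is correct and is precisely the argument the paper has in mind: the corollary is stated as an ``immediate'' consequence of Proposition \ref{sliceintersection}, and the implicit reasoning is exactly the dimension count $5-1=4$ using Lemma \ref{csderiv} to identify the fiber $\Scusp_z$ as $1$-dimensional.
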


\section{The transversality argument}\label{transargument}

Recall that $M$ is a finite volume hyperbolic 3-manifold with $k\geq 1$ cusps, $\Gamma$ is its fundamental group, $\{\Delta_1,\ldots,\Delta_k\}$ is a collection of peripheral subgroups, one for each cusp, $G=\SL_4(\R)$ and $\rhohyp:\Gamma\to\SO(3,1)\subset G$ is the holonomy of its complete hyperbolic structure. 

The main goal of this section is to prove Theorem \ref{mainthm2}. The proof of Theorem \ref{mainthm2} has two parts. First, we use a transversality argument involving the slice from Section \ref{slicesection} to produce a $k$-dimensional family of deformations of $\rhohyp$ in $\hom(\Gamma,G)$ whose image in $\rep(\Gamma,G)$ is also $k$ dimensional. Specifically, we prove:

\begin{theorem}\label{repdef}
	Let $M$ be a finite volume, non-compact hyperbolic 3-manifold with $k\geq 1$ cusps. Suppose that $M$ is infinitesimally rigid rel. $\partial M$ then  there is a $k$-dimensional subspace $V$ of $H^1(\Gamma,\g^{\rhohyp})$, a neighborhood, $U$ of $0$ in $V$, and a smooth family of representations $\mathcal{F}=\{\rho_u\mid u\in U\}$ in $\hom(\Gamma,G)$ such that 
		\begin{itemize}
		\item $\rho_0=\rhohyp$
		\item  For each $u\in U$, $\rho_u\vert_{\Delta_i}$ is the holonomy of a type 0, type 1, or type 2 generalized cusp.
	\end{itemize} 
	Furthermore, if $[\mathcal{F}]$ is the image of $\mathcal{F}$ in $\rep(\Gamma,G)$ then the Zariski tangent space to $[\mathcal{F}]$ at $[\rhohyp]$ is $V\subset H^1(\Gamma,\g^{\rhohyp})$.
\end{theorem}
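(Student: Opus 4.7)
The plan is to prove Theorem \ref{repdef} by a transversality argument in $\hom(\Gamma,G)$, combined with the smoothness of $\hom(\Gamma,G)$ at $\rhohyp$ guaranteed by \cite[Thm 3.2]{BDL} under the infinitesimal rigidity hypothesis. Let $\res\colon \hom(\Gamma,G)\to\hom(\Delta,G)=\prod_{i=1}^k\hom(\Delta_i,G)$ be the restriction map. Fix identifications $\Delta_i\cong \Z^2$ with generators chosen so that each $\res_i\rhohyp$ lies in $\Scusp$ (in the slice coordinates of Section \ref{slicesection}). Let $\tilde{\Sc}_i:=G\cdot\Sc\subset \hom(\Delta_i,G)$ denote the $G$-conjugacy orbit of the slice and set $\tilde{\Sc}:=\prod_i\tilde{\Sc}_i\subset \hom(\Delta,G)$. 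By Theorem \ref{sliceisgencuspholonomy}, a representation $\rho\colon\Gamma\to G$ satisfies the desired cusp condition if and only if $\res(\rho)\in\tilde{\Sc}$, so the sought-after family is (a neighborhood of $\rhohyp$ in) $\res^{-1}(\tilde{\Sc})$.

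The main step is to verify that $\res(\hom(\Gamma,G))$ and $\tilde{\Sc}$ meet transversely at $\rhohyp\vert_\Delta$. Upon passing to $H^1$-cohomology (i.e.\ quotienting out infinitesimal conjugations on each factor), transversality becomes
\[
\res_\ast H^1_{\rhohyp}(\Gamma,\g) \;+\; \mathbf{T} \;=\; H^1_{\rhohyp}(\Delta,\g),
\]
where $\mathbf{T}:=\bigoplus_{i=1}^k \mathbf{T}_i$ and $\mathbf{T}_i\subset H^1_{\rhohyp}(\Delta_i,\g)$ is the $4$-dimensional image of the tangent to $\Sc$ (Corollary \ref{sliceinhomol}). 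The dimensions match: $\dim\res_\ast H^1(\Gamma,\g)=3k$ by infinitesimal rigidity combined with Lemmas \ref{so31cohomology} and \ref{vcohomology}, $\dim\mathbf{T}=4k$, and $\dim H^1(\Delta,\g)=6k$, so surjectivity of the sum forces the intersection to have dimension exactly $k$. I would verify surjectivity via the $\SO(3,1)$-module decomposition $\g=\so(3,1)\oplus\v$. The $\v$-direction is automatic: $\pi_\v(\mathbf{T})=H^1_{\rhohyp}(\Delta,\v)$ because Lemma \ref{vbasis} applied cusp-by-cusp shows each $\mathbf{T}_i$ projects onto $H^1_{\rhohyp}(\Delta_i,\v)$. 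Absorbing the $\v$-part with elements of $\mathbf{T}$ reduces the remaining $\so(3,1)$-condition to the requirement that
\[
\res_\ast\colon H^1_{\rhohyp}(\Gamma,\so(3,1))\longrightarrow H^1_{\rhohyp}(\Delta,\so(3,1))/W
\]
be an isomorphism, where $W=\bigoplus_i W_i$ is the $2k$-dimensional ``parabolic-preserving'' subspace of Proposition \ref{cuspshapetang}. Both sides have dimension $2k$, so this is equivalent to $\res_\ast H^1_{\rhohyp}(\Gamma,\so(3,1))\cap W=0$.

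This vanishing is the statement that the complete hyperbolic structure admits no nontrivial infinitesimal deformation whose restriction to every cusp remains infinitesimally parabolic; it is classical, following from Mostow--Prasad rigidity or equivalently from the Lagrangian transversality result in \cite[\S5]{HePo} (the Lagrangians $\res_\ast H^1(\Gamma,\so(3,1))$ and $W$ in the symplectic space $H^1(\Delta,\so(3,1))$ are in general position). With this in hand, the standard transversality theorem applied to the smooth submanifolds $\hom(\Gamma,G)$ (smooth at $\rhohyp$ by \cite{BDL}) and $\tilde{\Sc}$ produces a smooth submanifold $\mathcal{F}\subset\hom(\Gamma,G)$ through $\rhohyp$ of dimension $k+\dim B^1_{\rhohyp}(\Gamma,\g)$ with $\res(\mathcal{F})\subset\tilde{\Sc}$. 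Setting $V:=\res_\ast^{-1}(\mathbf{T})\subset H^1_{\rhohyp}(\Gamma,\g)$ (a $k$-dimensional subspace by infinitesimal rigidity), I would pick a local linear section of $Z^1_{\rhohyp}(\Gamma,\g)\to H^1_{\rhohyp}(\Gamma,\g)$ over $V$ to parameterize a transversal slice of $\mathcal{F}$ by $U\subset V$, yielding the family $\{\rho_u\}_{u\in U}$. The cusp condition for each $\rho_u$ is then immediate from Theorem \ref{sliceisgencuspholonomy}. Since $\rhohyp$ is a smooth point of both $\hom(\Gamma,G)$ and $\rep(\Gamma,G)$ by \cite{BDL}, the image of $\mathcal{F}$ in $\rep(\Gamma,G)$ is a smooth $k$-dimensional submanifold whose tangent space at $[\rhohyp]$ is $V$, as required.

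The main obstacle is the transversality step $\res_\ast H^1(\Gamma,\so(3,1))\cap W=0$. Everything else reduces to standard bookkeeping with transversality, the smoothness from \cite{BDL}, and Theorem \ref{sliceisgencuspholonomy}; but this step requires identifying $W$ correctly as the parabolic-preserving tangent space and then invoking hyperbolic rigidity in a suitable Lagrangian-intersection form.
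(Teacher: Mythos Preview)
Your approach is essentially the same transversality argument as the paper's, repackaged: where the paper introduces the \emph{augmented restriction map} $\widetilde\res:\hom(\Gamma,G)\times G^{k-1}\to\hom(\Delta,G)$ and intersects its image with the product slice $\Sigma=\prod_i\Sc_i$, you instead keep $\res$ as is and enlarge the target subvariety to the $G$-saturation $\tilde\Sc=\prod_i G\cdot\Sc_i$. These are dual formulations; the extra $G$-factors can live on either side. Your reduction to $H^1$ and the identification of the key step as $\res_\ast H^1_{\rhohyp}(\Gamma,\so(3,1))\cap W=0$ is exactly the content of the paper's Lemma~\ref{translemma}, and your interpretation of $W$ as the ``parabolic-preserving'' tangent via Proposition~\ref{cuspshapetang} is correct.

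Two points deserve care. First, you treat $\tilde\Sc_i=G\cdot\Sc$ as a smooth submanifold, but the $G$-stabilizer of $\rho_s$ may jump as $s$ leaves $\Scusp$, so the orbit dimensions vary and $G\cdot\Sc$ need not be a manifold near the cusp locus. The paper avoids this entirely: $\Sigma$ is manifestly smooth, and Theorem~\ref{augrest} (imported from \cite{BDL}) guarantees $\widetilde\res$ is a submersion onto a codimension-$3k$ submanifold, so ordinary transversality of submanifolds applies. You could repair your version by phrasing things as transversality of the \emph{maps} $\res$ and $G^k\times S^k\to\hom(\Delta,G)$, which is equivalent to the paper's setup. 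Relatedly, your sentence ``generators chosen so that each $\res_i\rhohyp$ lies in $\Scusp$'' cannot be arranged by a single global conjugacy when $k>1$; this is precisely the obstruction the augmented map is designed to absorb.

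Second, your justification of $\res_\ast H^1(\Gamma,\so(3,1))\cap W=0$ via ``Lagrangian transversality in \cite[\S5]{HePo}'' is not quite on target: that reference establishes that $\res_\ast H^1(\Gamma,\so(3,1))$ is Lagrangian, but does not by itself give transversality to $W$. The paper's argument is more direct: it observes that every class in $W$ becomes a coboundary upon further restriction to a cyclic subgroup $\langle\mu_i\rangle\subset\Delta_i$, while Calabi--Weil rigidity says $H^1_{\rhohyp}(\Gamma,\so(3,1))\to\bigoplus_i H^1_{\rhohyp}(\langle\mu_i\rangle,\so(3,1))$ is injective. Your appeal to Mostow--Prasad (infinitesimal rigidity of the complete structure among complete structures) is morally the same fact, but the paper's formulation is what actually proves it.
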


Next, we apply Theorem \ref{cltdefthm} which guarantees that the representations produced in Theorem \ref{repdef} are holonomies of properly convex projective structures on $M$. We can now prove Theorem \ref{mainthm2} modulo Theorem \ref{repdef}.

\begin{proof}[Proof of Theorem \ref{mainthm2} modulo Theorem \ref{repdef}.]
	By Theorem \ref{repdef}, the restriction $\rho_u\in \mathcal{F}$ to each peripheral subgroup is the holonomy of a generalized cusp of type 0, type 1, or type 2. In particular the peripheral subgroups are virtual flag groups. By Theorem \ref{cltdefthm} we see that after possibly shrinking $U$ we can assume $\mathcal{F}\subset \hom_{ce}(\Gamma,G)$. Furthermore, since the Zariski tangent space to $[\mathcal{F}]$ in $\rep(\Gamma,G)$ at $[\rhohyp]$ is a $k$-dimensional subspace in $H^1(\Gamma,\g^{\rhohyp})$, we see that $[\mathcal{F}]$ is  $k$-dimensional. Again after possibly shrinking $U$, we conclude that $[\mathcal{F}]$ is the image of a $k$-dimensional family of convex projective structures in $\mathfrak{B}(M)$. 
	
	\end{proof}

\subsection{Proof of Theorem \ref{repdef}}

The remainder of this section is dedicated to the proof of Theorem \ref{repdef}. We now briefly describe a strategy to construct such a family of representations in Theorem \ref{repdef}. For the sake of simplicity, briefly assume that $M$ has a single cusp and that $\rhohyp$ has been conjugated so that $\res(\rhohyp)\in \Sc$. First, we show that near $\rhohyp$,  $\res$ is an immersion from $\hom(\Gamma,G)$ to $\hom(\Delta,G)$ whose image has codimension 3. Next, we show that $\res$ is transverse to $\Sc$. As mentioned before $\Sc$ has dimension 5 and hence codimension $13$ in $\hom(\Z^2,G)$. Thus the intersection of $\Sc$ and the image of $\res$ is a 2-dimensional submanifold. However, by Proposition \ref{sliceintersection}, only 1 of these dimensions is accounted for by conjugacy, and so there must be a path $\rho_t:\Gamma\to G$ of pairwise non-conjugate representations.

We now describe the details of the above construction. The overall strategy is similar to that found in the construction of convex projective structures found in \cite{BDL}. For this reason we will quote various results from this work.

When addressing the case of multiple cusps (i.e.\ $k>1$) one quickly encounters the the following problem: While the restriction map $\res:\hom(\Gamma,G)\to \hom(\Delta,G)$ is an immersion, its codimension is too large (it is $18k-15$ rather than $3k$). Recall that the group $\Delta=\oplus_{i=1}^k \Delta_i$, where the $\Delta_i$ are the fundamental groups of the boundary components fo $M$. Roughly speaking, this extra codimension is coming from the fact that we are not able to conjugate the restrictions of a representation to each peripheral subgroup independently. To cope with this problem, we construct an \emph{augmented restriction map} that allows us to perform these independent conjugacies. Let $M$ be a finite volume hyperbolic 3-manifold with fundamental group $\Gamma$ and $k$ cusps. Define $\widetilde\hom(\Gamma,G):=\hom(\Gamma,G)\times G^{k-1}$ and let
$$\widetilde \res:\widetilde\hom(\Gamma,G)\to \hom(\Delta,G),$$
by $(\rho,g_2,\ldots, g_k)\mapsto (\res_1(\rho),g_2\cdot \res_2(\rho),\ldots ,g_k\cdot \res_{k}(\rho))$, where the action of $G$ on $\hom(\Delta,G)$ is the adjoint action. Observe, that when $k=1$ that $\widetilde \res=\res$. The main result concerning the augmented restriction map is that locally it is a submersion with the desired codimension.

\begin{theorem}[Thm 3.8 in \cite{BDL}]\label{augrest}
	Let $M$ be a finite volume hyperbolic manifold with $k\geq 1$ cusps and fundamental group $\Gamma$, and let $\rhohyp$ be the holonomy of the complete hyperbolic structure. Suppose further that $M$ is infinitesimally rigid rel.\ $\partial M$. Then for any $(g_2,\ldots, g_k) \in G^{k-1}$, $\widetilde \res$ is a local submersion onto a submanifold of codimension $3k$ near $(\rhohyp, g_2,\ldots, g_k)$
\end{theorem}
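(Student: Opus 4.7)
The plan is to compute the differential of $\widetilde\res$ at $(\rhohyp, g_2, \ldots, g_k)$ in cocycle-cohomology terms, identify its kernel via the infinitesimal rigidity hypothesis, and invoke the constant rank theorem. By the theorem cited at the end of Section \ref{infdefs}, infinitesimal rigidity implies $\rhohyp$ is a smooth point of $\hom(\Gamma, G)$, so $\widetilde\hom(\Gamma, G)$ is a smooth manifold near the base point with tangent space $Z^1_{\rhohyp}(\Gamma, \g) \oplus \g^{k-1}$. A direct calculation (differentiating $\exp(t v_i) g_i \res_i \rho_t g_i^{-1} \exp(-t v_i)$ at $t=0$) gives the differential
\[
d\widetilde\res(z, v_2, \ldots, v_k) = \bigl(\res_1 z,\ g_2 \cdot \res_2 z + \partial v_2,\ \ldots,\ g_k \cdot \res_k z + \partial v_k\bigr),
\]
where $\partial v_i$ denotes the coboundary $\gamma \mapsto v_i - (g_i \cdot \res_i\rhohyp)(\gamma)\cdot v_i$ induced by infinitesimal conjugation by $\exp(t v_i)$.

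A kernel element requires $\res_1 z = 0$ as a cocycle and each $\res_i z$ (for $i \geq 2$) to be a coboundary, so in particular $[\res_i z] = 0$ in $H^1$ for every $i$. The infinitesimal rigidity hypothesis (together with Lemmas \ref{so31cohomology} and \ref{vcohomology}) then forces $[z] = 0$ in $H^1_{\rhohyp}(\Gamma, \g)$, whence $z = \partial u$ for some $u \in \g$; the cocycle-level condition $\res_1(\partial u) = 0$ further forces $u$ to lie in the centralizer $H^0_{\res_1\rhohyp}(\Delta_1, \g)$, and for each $i \geq 2$ the element $v_i$ is then determined by $u$ up to an element of $H^0_{g_i \cdot \res_i\rhohyp}(\Delta_i, \g)$. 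A direct matrix computation shows that the centralizer in $\sl(4,\R)$ of $\mathfrak{t}_0$ (equivalently, of a type $0$ cusp subgroup) is $3$-dimensional (spanned by the two generators of $\mathfrak{t}_0$ together with $e_{14}$), so $\dim H^0_{\res_i\rhohyp}(\Delta_i, \g) = 3$ for every $i$, giving kernel dimension $3 + 3(k-1) = 3k$. Using $\dim H^1_{\rhohyp}(\Gamma, \g) = 3k$ (by the same two lemmas and infinitesimal rigidity) and $H^0_{\rhohyp}(\Gamma, \g) = 0$, the source has dimension $(15+3k) + 15(k-1) = 18k$ and the target tangent space has dimension $\sum_i \dim Z^1_{\res_i\rhohyp}(\Delta_i, \g) = 18k$, so the image of the differential is $15k$-dimensional and thus of codimension $3k$.

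To promote this pointwise calculation to a local submersion onto a submanifold I would invoke the constant rank theorem, and this is the main obstacle. Lower semicontinuity of rank already gives rank $\geq 15k$ in a neighborhood of the base point; the reverse inequality is obtained by repeating the cohomological kernel analysis at nearby base points. Smoothness of $[\rhohyp]$ in $\rep(\Gamma, G)$ ensures that $\dim H^1$ is locally constant and that injectivity of $\res_\ast$ on $H^1$ persists in a neighborhood, while lower semicontinuity of centralizer dimensions provides a matching lower bound of $3k$ on the kernel at nearby points. This yields the required constant rank $15k$ in a neighborhood, and hence the constant rank theorem produces the desired codimension $3k$ submanifold structure on the image near $\widetilde\res(\rhohyp, g_2, \ldots, g_k)$.
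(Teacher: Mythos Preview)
The paper does not prove this theorem; it is quoted from \cite{BDL} (their Theorem~3.8). So there is no ``paper's own proof'' to compare against, and I will assess your argument on its merits.

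Your computation of $d\widetilde\res$ and of its kernel at the base point is correct, as are the dimension counts: source and target both have Zariski tangent space of dimension $18k$, the kernel is $3k$-dimensional, and the image is $15k$-dimensional. This is the heart of the result.

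The gap is in the constant-rank step. You invoke ``lower semicontinuity of centralizer dimensions,'' but centralizer dimension is \emph{upper} semicontinuous: at the special unipotent point the centralizer is large, and generically it can only shrink. So this sentence, as written, is false. What you actually need is an \emph{absolute} lower bound: the centralizer in $\sl(4,\R)$ of any pair of commuting elements of $\SL(4,\R)$ has dimension at least $3$ (equivalently $\geq 4$ in $\gl_4$). This is true, but it is a separate fact about commuting pairs of matrices, not a semicontinuity statement. Similarly, your claim that injectivity of $\res_\ast$ on $H^1$ persists at nearby $\rho$ is not automatic: while $\dim H^1_\rho(\Gamma,\g)$ is locally constant by smoothness, $\dim H^1_\rho(\Delta_i,\g)=2\dim H^0_\rho(\Delta_i,\g)$ can jump, so the target is not obviously a vector bundle and the usual open-condition argument for injectivity does not apply directly. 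You would need to argue at the cocycle level, or use the absolute centralizer bound to control $\dim Z^1_\rho(\Delta_i,\g)$ as well. These points can be repaired, but the argument as written does not close.
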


Picking generators $\gamma_1^i$ and $\gamma_2^i$ for $\Delta_i$, we let $\Sc_i$ be the copy of $\Sc$ in $\hom(\Delta_i,G)$, let $\Scusp_i$ be the copy of $\Scusp$ in $\Sc_i$, let $\Sigma=\Sc_1\times \ldots \times \Sc_k$, and let $\Sigma_c=\Scusp_1\times \ldots \times \Scusp_k$. Choose $g_i\in G$ so that $g_i\cdot \res_i(\rhohyp)\in \Sc_i$. Furthermore, by choosing $s_i\in \Scusp_i$ we can arrange that $\rho_{s_i}=\res_i(\rhohyp)$. For $s=(s_1,\ldots, s_k)$, let $V_\Sigma$ be the image of $T_s\Sigma$ in $H^1(\Delta,\g^{\rhohyp})$. 

In this context we can prove the following transversality result involving $\widetilde \res$ and $\Sigma$. 

\begin{proposition}\label{transversalityprop}
	The map $\widetilde \res$ is transverse to $\Sigma$ at $(\rhohyp,g_2,\ldots, g_k)$, with $2k$-dimensional local intersection. 
\end{proposition}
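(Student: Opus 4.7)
The plan is to reduce transversality of $\widetilde \res$ and $\Sigma$ at the tangent-space level to a statement in twisted cohomology, then split using the $\SO(3,1)$-module decomposition $\g = \so(3,1) \oplus \v$ and handle each summand in turn.

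First, I would show that $B^1_\rhohyp(\Delta, \g)$ is contained in the image of $d\widetilde\res$ at the basepoint $(\rhohyp, g_2, \ldots, g_k)$: for $i \geq 2$, varying $g_i$ while holding $\rho$ fixed produces all of $B^1_\rhohyp(\Delta_i, \g)$ in the $i$-th component; for $i = 1$ it suffices to observe that the natural map $B^1_\rhohyp(\Gamma, \g) = \g/\g^\Gamma \to \g/\g^{\Delta_1} = B^1_\rhohyp(\Delta_1, \g)$ is surjective (since $\g^\Gamma \subseteq \g^{\Delta_1}$). Transversality is therefore equivalent to the cohomological statement
\[
\res_*\bigl(H^1_\rhohyp(\Gamma, \g)\bigr) + [T_s \Sigma] = H^1_\rhohyp(\Delta, \g).
\]
Both subspaces on the left respect the splitting $H^1_\rhohyp(\Delta, \g) = H^1_\rhohyp(\Delta, \so(3,1)) \oplus H^1_\rhohyp(\Delta, \v)$, so it suffices to verify the equality on each summand.

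The $\v$-summand should be straightforward. Using the explicit formulas for $\partial/\partial a$ and $\partial/\partial b$ together with Lemma \ref{vcoboundary}, each of $[D_a^{(i)}], [D_b^{(i)}]$ can be written as an element of $[T_{s_i}\Sc_i]$ after subtracting an $\so(3,1)$ contribution that itself lies in the span of $[\partial/\partial x_j^{(i)}], [\partial/\partial y_j^{(i)}]$ and hence in $[T_{s_i}\Sc_i]$. By Lemma \ref{vbasis} these classes form a basis of $H^1_\rhohyp(\Delta_i, \v)$, so $H^1_\rhohyp(\Delta, \v) \subseteq [T_s \Sigma]$, which immediately yields the equality on this summand and the internal splitting $[T_s \Sigma] = W_{\mathrm{tot}} \oplus H^1_\rhohyp(\Delta, \v)$, where $W_{\mathrm{tot}} = \bigoplus_i W_i$ and each $W_i$ is the image of $T_{s_i}\Scusp_i$ from Proposition \ref{cuspshapetang}, of dimension $\dim T_{s_i}\Scusp_i - \dim T_{s_i}\Scusp_{z_i} = 3 - 1 = 2$.

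The main obstacle is the $\so(3,1)$-summand, where I must show that the $2k$-dimensional subspaces $\res_*(H^1_\rhohyp(\Gamma, \so(3,1)))$ and $W_{\mathrm{tot}}$ of the $4k$-dimensional space $H^1_\rhohyp(\Delta, \so(3,1))$ intersect trivially. A class in the intersection would correspond to an infinitesimal deformation $[w]$ of the complete hyperbolic structure whose restriction to each peripheral subgroup is tangent to the parabolic locus $\Scusp_i$---that is, a deformation preserving parabolicity of every cusp holonomy to first order. By the infinitesimal form of Mostow--Prasad rigidity (equivalently, by Thurston's hyperbolic Dehn surgery theorem, according to which any nontrivial deformation of the complete structure fails to preserve parabolicity at some cusp), $[w]$ must vanish. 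Combined with the injectivity of $\res_*$ on $H^1_\rhohyp(\Gamma, \so(3,1))$ from Lemma \ref{so31cohomology}, this yields triviality of the intersection. The claimed $2k$-dimensional local intersection then follows from the count $15k + 5k - 18k = 2k$, using the codimension of $\widetilde\res$ provided by Theorem \ref{augrest}.
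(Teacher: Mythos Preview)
Your strategy matches the paper's closely: both reduce transversality to the cohomological statement that $V_\Sigma$ together with $\res_*(H^1_{\rhohyp}(\Gamma,\so(3,1)))$ spans $H^1_{\rhohyp}(\Delta,\g)$, and both settle the $\so(3,1)$ content via an infinitesimal rigidity statement (the paper cites Calabi--Weil explicitly in Lemma~\ref{translemma}, which is the precise name for what you call ``infinitesimal Mostow--Prasad'').

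There is, however, a gap in your $\v$-summand paragraph. You assert that the $\so(3,1)$ part of $[\partial/\partial a]$ lies in the span of $[\partial/\partial x_j],[\partial/\partial y_j]$ ``and hence in $[T_{s_i}\Sc_i]$.'' The first claim is correct (the $\so(3,1)$ piece of $\alpha_i$ is visibly a combination of $\xi_1,\xi_2$), but the ``hence'' is not justified: the six coordinate cocycles $\partial/\partial a,\ldots,\partial/\partial y_2$ span a $6$-dimensional space while $T_s\Sc$ is only $5$-dimensional, so the four cocycles $\partial/\partial x_j,\partial/\partial y_j$ do not all lie in $T_s\Sc$, and there is no immediate reason their cohomology classes lie in $V_\Sigma$. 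The assertion is in fact \emph{true}---one can check that the four classes $[\partial/\partial x_j],[\partial/\partial y_j]$ span exactly the $2$-dimensional space $W$, because an infinitesimal dilation gives a second coboundary among them beyond the rotation $T_s\Scusp_z$ identified in Proposition~\ref{sliceintersection}---but this requires an extra argument you have not supplied.

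The paper sidesteps this entirely. Rather than attempting to split $V_\Sigma$ as $W_{\mathrm{tot}}\oplus H^1_{\rhohyp}(\Delta,\v)$, Lemma~\ref{translemma} argues directly that $V_\Sigma\cap\res_*(H^1_{\rhohyp}(\Gamma,\so(3,1)))=0$: any element of the intersection lies in $H^1_{\rhohyp}(\Delta,\so(3,1))$, hence in $W$ by Proposition~\ref{cuspshapetang}, and then Calabi--Weil (applied via further restriction to a single peripheral element) forces it to vanish. A dimension count ($4k+2k=6k$) finishes. This avoids ever asking whether $H^1_{\rhohyp}(\Delta,\v)$ is literally contained in $V_\Sigma$.
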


%\begin{remark}
%This result is analogous to Lemma 4.6 of \cite{BDL}	
%\end{remark}

In order to prove Proposition \ref{transversalityprop} we need the following Lemma:

\begin{lemma}\label{translemma}
	 $$H^1(\Delta,\g^{\rhohyp})=V_\Sigma\oplus \res_\ast(H^1(\Gamma,\so(3,1)^{\rhohyp})).$$
%Moreover, $\res_\ast(H^1(\Gamma,\g^{\rhohyp}))\cap V_\Sigma=\res_\ast(H^1(\Gamma,\v^{\rhohyp}))$

Moreover, if $L=\res_\ast(H^1(\Gamma,\g^{\rhohyp}))\cap V_\Sigma$ then $\pi_{\v}\vert_{L}$ is an isomorphism between $L$ and $\res_\ast(H^1(\Gamma,\v^{\rhohyp}))$.    
\end{lemma}
\begin{proof}

 Lemmas \ref{so31cohomology} and \ref{vcohomology} imply that $H^1(\Delta,\g^{\rhohyp})$ is $6k$-dimensional and that $\res_\ast(H^1(\Gamma,\so(3,1)^{\rhohyp}))$ is $2k$-dimensional. Corollary \ref{sliceinhomol} implies that $V_\Sigma$ is $4k$-dimensional, and so the result will follow if we can show that $V_\Sigma\cap \res_\ast(H^1(\Gamma,\so(3,1)^{\rhohyp}))$ is trivial.

 Let $[w]\in V_\Sigma\cap \res_\ast(H^1(\Gamma,\so(3,1)^{\rhohyp}))$, then there is $[w']\in H^1(\Gamma,\so(3,1)^{\rhohyp})$ so that $\res_\ast([w'])=[w]$. For each $1\leq i\leq k$, choose a non-trivial element $m_i\in \Delta_i$, and let $\mu_i$ be the subgroup generated by $m_i$. As in Section \ref{3mfldCohomology}, let $\mu$ be the direct sum of the $\mu_i$ and let $H^1(\mu,\so(3,1)^{\rhohyp})$ be the corresponding cohomology group. The restriction of $[w]$ to $\Delta_i$ is contained in $V_\Sigma\cap H^1(\Delta_i,\so(3,1)^{\rhohyp})$ and is hence in the span of the set $\left\{\left[\frac{\partial}{\partial x_1}\right],\left[\frac{\partial}{\partial y_1}\right],\left[\frac{\partial}{\partial x_2}\right],\left[\frac{\partial}{\partial y_2}\right]\right\}$. It follows that there is $p_i=(u_i,v_i)\in \R^2\bs\{0\}$ such that $w(m_i)=c_{p_i},$ where 
 $$c_{p_i}=\begin{pmatrix}
 	0 & u_i & v_i & 0\\
 	0 & 0 & 0 & u_i\\
 	0 & 0 & 0 & v_i\\
 	0 & 0 & 0 & 0
 \end{pmatrix}$$
 
 It is then easy to check that $w$ is a coboundary when restricted to $\mu_i$, and hence $\res_\ast([w'])=0\in H^1(\mu,\so(3,1)^{\rhohyp})$. However, by Lemma \ref{so31cohomology}, $\res_\ast:H^1(\Gamma,\so(3,1)^{\rhohyp})\to H^1(\mu,\so(3,1)^{\rhohyp})$ is injective and so $[w']=0$. Since $[w]=\res_\ast([w'])$ it follows that $[w]=0$.

 For the last point, the image of $\pi_\v$ restricted to $L$ is contained in $\res_\ast(H^1(\Gamma,\v^{\rhohyp}))$. The kernel of $\pi_\v\vert_{L}$ is easily seen to be $\res_\ast(H^1(\Gamma,\so(3,1)^{\rhohyp}))\cap V_\Sigma$, and so by the previous argument, $\pi_{\v}\vert_{L}$ is an injection.  
 By the previous transversality, $L$ is $k$-dimensional and by Lemma \ref{vbasiseq}, $\res_\ast(H^1(\Gamma,\v^{\rhohyp}))$ is also $k$-dimensional, and so for dimensional reasons $\pi_\v\vert_{L}$ is an isomorphism.  \end{proof}

We can now prove Proposition \ref{transversalityprop}.

\begin{proof}[Proof of Proposition \ref{transversalityprop}]
	
	Let $\vec{\rho}_{hyp}=(\res_1(\rhohyp),\ldots,\res_k(\rhohyp))$. Near $\vec{\rho}_{hyp}$, the space $\hom(\Delta,\g^{\rhohyp})$ is $18k$-dimensional. By construction, $\Sigma$ has codimension $13k$ and contains $\vec{\rho}_{hyp}$. Let $I$ be the image of $\widetilde \res$, then by Theorem \ref{augrest}, $I$ has codimension $3k$ near $\vec{\rho}_{hyp}$. Thus if the intersection of $\Sigma$ and $I$ is transverse at $\vec{\rho}_{hyp}$ then the intersection will have codimension $16k$, or equivalently dimension $2k$. 
	
	The tangent space to $\hom(\Delta,G)$ at $\vec{\rho}_{hyp}$ is $Z^1(\Delta,\g^{\rhohyp})$ and we can write $$Z^1(\Delta,\g^{\rhohyp})\cong H^1(\Delta,\g^{\rhohyp})\oplus B^1(\Delta,\g^{\rhohyp}).$$
	 From the construction of $\widetilde\res$, it can be seen that at $p=(\rho,g_2,\ldots,g_k)\in \widetilde\hom(\Gamma,G)$,
	 $$T_p\left(\widetilde\hom(\Gamma,G)\right)\cong Z^1_{\rho}(\Gamma,\g^{\rhohyp})\oplus\left(\oplus_{i=2}^kB^1_{g_i\cdot \rho}(\Gamma,\g^{\rhohyp})\right),$$
	 and that the map $\widetilde\res_\ast:T_p\left(\widetilde\hom(\Gamma,G)\right)\to Z^1_{\rho}(\Delta,\g^{\rhohyp})$ is just the componentwise application of $\res_\ast$.  Since $\rhohyp$ is an irreducible representation, it follows that $B^1(\Delta,\g^{\rhohyp})\subset T_{\vec{\rho}_{hyp}}I$. On the other hand, from Lemma \ref{translemma} we know that $V_\Sigma$ and $\res_\ast(H^1(\Gamma,\so(3,1)^{\rhohyp})$ span $H^1(\Delta,\g^{\rhohyp})$. As a result, $T_{\vec{\rho}_{hyp}}I$ and $T_{\vec{\rho}_{hyp}}\Sigma$ span $Z^1(\Delta,\g^{\rhohyp})$, and are thus transverse. 
	\end{proof}

We can now prove Theorem \ref{repdef}. 

\begin{proof}[Proof of Theorem \ref{repdef}.]

Recall that there are $g_i\in G$ so that $g_i\cdot \res_i(\rhohyp)\in \Sc_i$, let $p=(\rhohyp,g_2,\ldots,g_k)\in \widetilde\hom(\Gamma,G)$, and let $p'=\widetilde\res(p)$. By Lemma \ref{translemma}, $\res_\ast(H^1(\Gamma,\g^{\rhohyp}))$ intersects $V_\Sigma$ transversely in a $k$-dimensional subspace $\tilde V$. Let $V$ be the $k$-dimensional subspace of $H^1(\Gamma,\g^{\rhohyp})$ such that $\res_\ast(V)=\tilde V$.

  As a result, we can find a lift $R:V\to Z^1(\Delta,\v^{\rhohyp})$ of $\res_\ast$ such that 
$$R(H^1(\Gamma,\v^{\rhohyp}))\subset W:=\widetilde\res_\ast(T_p(\widetilde\hom(\Gamma,G)))\cap T_{p'}(\Sigma).$$

In other words, there is a commutative the diagram 
$$
\begin{tikzcd}
 & W\subset Z^1(\Delta,\g^{\rhohyp})\arrow[d]\\
 V\arrow[ur,"R"]\arrow[r,"\res_\ast"] & H^1(\Delta,\g^{\rhohyp}) 	
\end{tikzcd}
$$
 The space $W$ is the tangent space to the intersection of the image of $\widetilde\res$ and $\Sigma$ at $p'$. Thus by Proposition \ref{transversalityprop} we can find a small neighborhood, $U$, of $0$ in $V$ and

\begin{itemize}
	\item A smooth family $\mathcal{F}=\{\rho_u\mid u\in U\}$ of representation in $\hom(\Gamma,G)$ such that $\rho_0=\rhohyp$.  The tangent space of $\res(\mathcal{F})$ at $\res(\rhohyp)$ is $R(V)$.
	\item  Smooth families $\{g^u_i\mid u\in U\}$ of elements of $G$ for $2\leq i\leq k$, such that $g_i^0=g_i$ and such that $g_i^u\cdot \res_i(\rho_u)\in \Sc_i$.
\end{itemize}

By construction, the image of the space of infinitesimal deformations of $\mathcal{F}$ at $\rhohyp$ in $H^1(\Gamma,\g^{\rhohyp})$ is $V$, and so $[\mathcal{F}]$ is $k$-dimensional. Furthermore, $\res_i(\rho_u)$ is conjugate into $\Sc_i$. By Theorem \ref{sliceisgencuspholonomy} this implies that the restriction of $\rho_u$ to each peripheral subgroup is the holonomy of a generalized cusp of type 0, type 1, or type 2.

\end{proof}

\begin{comment}

\begin{remark}\label{cusptyperemark}
	A careful examination of the proof of Theorem \ref{repdef} shows that since $H^1(\Delta,\v^{\rhohyp})\subset V_S$ that $\res_\ast[z]$ will be contained in $W'\cap V_S$. It follows that if $[z]$ is \emph{any} cohomology class in $H^1(\Gamma,\v^{\rhohyp})$ then there is a family of representations $\rho_t$ tangent to $[z]$ such that the restriction of $\rho_t$ to any of the peripheral subgroups is the holonomy of a generalized cusp. Applying Theorem \ref{cltdefthm} we see that this family of representations are the holonomies of convex projective structures with generalized cusp ends. However, in this case it is possible that the restriction of $[z]$ to some of the cusps may be trivial and hence the resulting convex projective manifold may have some type 0 cusps. This observation is summarized by the following Theorem.
	\end{remark}
	\end{comment}

\section{Controlling the cusps}\label{controlcusps}

In this section we describe some theoretical results that make it possible to control the types of the cusps that are produced by Theorem \ref{mainthm2}. This will allow us to prove Theorem \ref{mainthm}. The first main results of this section is Theorem \ref{slicecoordtyperelationship} which describes a sufficient condition for ensuring that Theorem \ref{mainthm2} produces properly convex manifolds with type 2 cusps. The condition in Theorem \ref{slicecoordtyperelationship} involves the value of certain cohomological quantities. In Section \ref{examples} we calculate these quantities for some examples in order to find explicit manifolds that admit properly convex structures with type 2 cusps. 

The other main result of this section, Theorem \ref{achiralthm}, shows that in the presence of orientation reversing symmetries it is sometimes possible to guarantee that the deformations produced by Theorem \ref{mainthm2} have (some) type 1 cusps. 

\subsection{Slice coordinates for $H^1(\Delta,\v^{\rhohyp})$}

 Recall that $M$ is a finite volume hyperbolic 3-manifold with fundamental group $\Gamma$. The manifold $M$ has $k\geq 1$ cusps $\{\partial_1,\ldots,\partial_k\}$ and assume that we have chosen a peripheral subgroups $\{\Delta_1,\ldots,\Delta_k\}$ one for each cusp. For each $\Delta_i\cong \Z^2$ pick a set $\{\gamma_1^i,\gamma_2^i\}$ of generators. Recall that $H^1(\Delta,\v^{\rhohyp}):=\bigoplus_{i=1}^kH^1(\Delta_i,\v^{\rhohyp})$. The spaces $H^1(\Delta_i,\v^{\rhohyp})$ and $H^1(\Z^2,\v^{\rhohyp})$ are isomorphic vector spaces and we would like to identify a convenient isomorphism between these two spaces. 
 
  By Lemma \ref{vbasis}, $\left\{\left[D_a\right],\left[D_b\right]\right\}$ is a basis of $H^1(\Z^2,\v^{\rhohyp})$. Using the generating set $\{\gamma_1^i,\gamma_2^i\}$, we can identify $\Delta_i$ with $\Z^2$ and $H^1(\Delta_i,\v^{\rhohyp})$ with $H^1(\Z^2,\v^{\rhohyp})$. Next, assume that $\rhohyp$ has been conjugated so that $\rhohyp\vert_{\Delta_i}\in \Sc\subset \hom(\Z^2,G)$.  Using \eqref{alphamat} and \eqref{betamat}, define cocycles $z_{\gamma_1^i}$ and $z_{\gamma_2^i}$ in $Z^1(\Delta_i,\v^{\rhohyp})$ by the property that $z_{\gamma_1^i}(\gamma_1^i)=\alpha_1$, $z_{\gamma_1^i}(\gamma_2^i)=\alpha_2$, $z_{\gamma_2^i}(\gamma_1^i)=\beta_1$, $z_{\gamma_2^i}(\gamma_2^i)=\beta_2$. It is easy to see that $\{[z_{\gamma_1^i}],[z_{\gamma_2^i}]\}$ is a basis for $H^1(\Delta_i,\v^{\rhohyp})$. A basis constructed in this way is called a \emph{slice basis} for $H^1(\Delta_i,\v^{\rhohyp})$ (with respect to $\{\gamma_1^i,\gamma_2^i\}$). If we regard elements of a slice basis as elements of $H^1(\Delta,\v^{\rhohyp})$ then $\left\{[z_{\gamma_1^1}],[z_{\gamma_2^i}],\ldots,[z_{\gamma_1^k}],[z_{\gamma_2^k}]\right\}$ is a basis for $H^1(\Delta,\v^{\rhohyp})$, which we also call a \emph{slice basis}.

  Suppose now that $M$ is infinitesimally rigid rel.\ $\partial M$.  Recall that $V=\res_\ast^{-1}(\res_\ast(H^1(\Gamma,\g^{\rhohyp})\cap V_\Sigma))$ and observe that since $M$ is infiniteismally rigid rel.\ $\partial M$ that $V$ is a $k$-dimensional subspace. If $[z]\in V$ then Lemma \ref{translemma} implies that  $\pi_{\v}\circ\res_\ast([z])$ is a non-trivial element of $H^1(\Delta,\v^{\rhohyp})$ and so we can write
  
  \begin{equation}\label{zslicecoords}
  \pi_{\v}\circ\res_\ast([z])=c_{\gamma_1^1}[z_{\gamma_1^1}]+c_{\gamma_2^1}[z_{\gamma_2^1}]+\ldots+ c_{\gamma_1^k}[z_{\gamma_1^k}]+c_{\gamma_2^k}[z_{\gamma_2^k}]	
  \end{equation}
\noindent as a non-trivial linear combination. 
  
  The coordinates of $\pi_{\v}\circ\res_\ast([z])$ with respect to the slice basis coming from \eqref{zslicecoords} are called \emph{slice coordinates} for $[z]$.  The next Theorem describes the relationship between the slice coordinates and the cusp types of the properly convex manifolds produced by Theorem \ref{mainthm}.
  
 	\begin{theorem}\label{slicecoordtyperelationship}
 		Suppose that $M$ is infinitesimally rigid rel.\ $\partial M$, and suppose $[z]\in V$ has slice coordinates $\left(c_{\gamma_1^1},c_{\gamma_2^1},\ldots,c_{\gamma_1^k},c_{\gamma_2^k}\right)$. Let $I_{[z]}=\{i\mid c_{\gamma_1^i}\neq 0 {\rm\ and\ }c_{\gamma_2^i}\neq 0\}$ and $II_{[z]}=\{i\mid c_{\gamma_1^i}\neq 0 {\rm\ or\ }c_{\gamma_2^i}\neq 0\}$ 
 		\begin{enumerate}
 		\item $M$ admits a convex projective structure where if $i\in I_{[z]}$ then the $i$th cusp is a type 2 generalized cusp and	
 		\item $M$ admits a convex projective structure where if $i \in II_{[z]}$ then the $i$th cusp is a type 1 or a type 2 generalized cusp for each $i\in II_{[z]}$
 		\end{enumerate}
 		\end{theorem}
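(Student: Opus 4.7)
The plan is to trace a single one-parameter deformation produced by Theorem \ref{repdef}, push it into the slice on each cusp, and read off the cusp type directly from the slice coordinates $(a_i(t),b_i(t))$.

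First, apply Theorem \ref{repdef} with $V$ as in the hypothesis to obtain a smooth family $\{\rho_u : u \in U\}\subset \hom(\Gamma,G)$ together with conjugating families $\{g_i^u\}$ satisfying $g_i^u\cdot \res_i(\rho_u)\in \Sc_i$. Picking a representative $\hat z\in V$ of the cohomology direction, consider the curve $\rho_t := \rho_{t\hat z}$ for $t$ near $0$, and let $\tau_i(t):=g_i^t\cdot \res_i(\rho_t)\in \Sc_i$. Writing $\tau_i(t)$ in the slice coordinates of $\Sc_i$ as $(a_i(t),b_i(t),x_1^i(t),y_1^i(t),x_2^i(t),y_2^i(t))$, we automatically have $a_i(0)=b_i(0)=0$ since $\tau_i(0)\in \Scusp_i$.

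The heart of the argument is computing $(\dot a_i(0),\dot b_i(0))$. The cocycle $\dot\tau_i(0)\in Z^1_{\tau_i(0)}(\Delta_i,\g)$ differs from a representative of the conjugated class $g_i\cdot \res_i[z]$ only by a coboundary arising from the variation of $g_i^t$, so the two agree in cohomology. Projecting $\dot\tau_i(0)$ to $\v$ and expanding in the frame $\{\partial/\partial a,\partial/\partial b,\partial/\partial x_1,\partial/\partial y_1,\partial/\partial x_2,\partial/\partial y_2\}$ for $T_{\tau_i(0)}\Sc_i$, the explicit computations of Section \ref{slicesection} show that the $x_j,y_j$ terms project into $H^1_{\tau_i(0)}(\Delta_i,\so(3,1))$, so the surviving $\v$-part is $\dot a_i(0)[D_a]+\dot b_i(0)[D_b]$. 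By definition of the slice basis this class also equals $c_{\gamma_1^i}[D_a]+c_{\gamma_2^i}[D_b]$, and since $\{[D_a],[D_b]\}$ is a basis of $H^1_{\tau_i(0)}(\Delta_i,\v)$ by Lemma \ref{vbasis}, we conclude $\dot a_i(0)=c_{\gamma_1^i}$ and $\dot b_i(0)=c_{\gamma_2^i}$.

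The conclusion follows by reading off cusp types via Theorem \ref{sliceisgencuspholonomy}. For $i\in I_{[z]}$ both derivatives are nonzero, so $a_i(t)$ and $b_i(t)$ are both nonzero for every sufficiently small $t\ne 0$, making the $i$-th cusp of $\rho_t$ of type 2. For $i\in II_{[z]}$ at least one of $a_i(t),b_i(t)$ is nonzero for small $t\ne 0$, so the $i$-th cusp is of type 1 or type 2. Fixing any sufficiently small $t\ne 0$ and invoking Theorem \ref{cltdefthm} exactly as in the proof of Theorem \ref{mainthm2} promotes $\rho_t$ to the holonomy of a convex projective structure on $M$, and the same structure witnesses both statements (1) and (2).

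The main obstacle will be justifying the identification $(\dot a_i(0),\dot b_i(0))=(c_{\gamma_1^i},c_{\gamma_2^i})$ rigorously, since the conjugacy variation $g_i^t$ perturbs the naive derivative. The key observation is that this perturbation is a coboundary in $\g$, and the content of Lemma \ref{vbasis} (together with the explicit forms of the matrices $\alpha_i',\beta_i'$) is precisely that such coboundaries contribute nothing to the $[D_a]$ or $[D_b]$ coefficient of the $\v$-projection. Once this is formalized, the remainder is an elementary Taylor expansion in the slice coordinates.
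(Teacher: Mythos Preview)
Your proposal is correct and follows essentially the same approach as the paper's proof: take a one-parameter family from Theorem \ref{repdef}, track its restriction to each cusp as a path in the slice $\Sc_i$, identify the derivatives $(\dot a_i(0),\dot b_i(0))$ with the slice coordinates $(c_{\gamma_1^i},c_{\gamma_2^i})$, and then read off the cusp type from whether $a_i(t),b_i(t)$ are nonzero for small $t\neq 0$. The paper simplifies notation by treating only the single-cusp case (so that $\rho_t|_\Delta$ itself lies in $\Sc$ and no conjugating family appears), and it phrases the final step in terms of eigenvalues changing to first order rather than citing Theorem \ref{sliceisgencuspholonomy} directly; you handle the multi-cusp case more carefully by tracking the $g_i^t$ and explicitly arguing that their variation contributes only a coboundary, which is the right thing to do and is implicit in the paper's treatment.
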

 		
 		\begin{proof}
 		To minimize notation we address the case when $M$ has a single cusp. The multiple cusp case can be treated similarly. From Theorem \ref{repdef}, for each $[z]\in V$ there is a family $\rho_t:\pi_1M\to G$ of representations such that $\rho_0=\rhohyp$ and whose Zariski tangent vector is $z$. Furthermore $\rho_t\vert_\Delta$ is a path in $\Sc$ with Zariski tangent vector $w=\res_\ast(z)$.  As such we can write 
 		$$w=\dot{a}D_a+\dot{b}D_b+\tilde w,$$
 		where $\tilde w\in Z^1(\Delta,\so(3,1)^{\rhohyp})$, and observe that this implies that $\dot a$ and $\dot b$ are the slice coordinates of $[z]$. 
 		 		
 		If either $\dot{a}$ or $\dot{b}$ is non-zero then by examining \eqref{slicerep} it follows that as $t$ moves away from 0 some eigenvalue of either $\rho_t(\gamma_1^1)$ or $\rho_t(\gamma_2^1)$ is changing to first order in $t$ away from 1. This implies that for $t\neq 0$ that $\rho_t$ is the holonomy of either a type 1 or type 2 cusp, which proves the second claim. Similarly, if both $\dot{a}$ and $\dot{b}$ are non-zero, it follows that as $t$ moves away from 0 that two eigenvalues of both $\rho_t(\gamma_1^1)$ and $\rho_t(\gamma_2^1)$ are changing to first order in $t$ away from 1. This implies that for $t\neq 0$ that $\rho_t$ is the holonomy of a type 2 cusp, which proves the first claim. 		
 		 \end{proof}
 		 
 \begin{remark}
 It is easy to see that if $\rho_t$ is a path in $\Sc$ that is type 0 when $t=0$ and type 1 otherwise that the Zariski tangent vector to this path at $t=0$ has either $\dot{a}$ or $\dot{b}=0$, but not both. It is tempting to say that if $i\in II\backslash I$ then the $i$th cusp is type 1. However, this turns out not to be the case. The problem is that the slice coordinates are only encoding first order behavior. For instance, the representations 
 $$\rho_t(\gamma_1^1)=\exp\begin{pmatrix}
0 & 1 & 0 & 0\\
0 & t & 0 & 1\\
0 & 0 & 0 & 0\\
0 & 0 & 0 & 0	
\end{pmatrix},\ 
\rho_t(\gamma_2^1)=\exp\begin{pmatrix}
	0 & 0 & 1 & 0\\
	0 & 0 & 0 & 0\\
	0 & 0 & t^2 & 1\\
	0 & 0 & 0 & 0
\end{pmatrix}
$$
	are holonomies of type 2 cusps when $t\neq 0$, however, up to first order the second generator remains constant. 
 \end{remark}

Before proceeding with the proof of Theorem \ref{mainthm} we need to recall the following result from \cite{BDL} that will ensure that we can find a cohomology class in $H^1(\Gamma,\v^{\rhohyp})$ whose restriction to each cusp is non-trivial. We will use this result to ensure that the representations we construct in Theorem \ref{repdef} will be holonomies of type 1 or 2 cusps rather than type 0 (standard hyperbolic cusps).

\begin{lemma}[See Lem 4.4 of \cite{BDL}]\label{eachcuspnontriv}
There exists a cohomology class $[z]\in H^1(\Gamma,\v^{\rhohyp})$ with the property that $(\res_i)_\ast[z]\in H^1(\Delta,\v^{\rhohyp})$ is non-trivial for each $1\leq i\leq k$. 	
\end{lemma}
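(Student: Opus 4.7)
The plan is to reduce the lemma to the statement that for each $i$, the restriction $(\res_i)_\ast:H^1_{\rhohyp}(\Gamma,\v)\to H^1_{\rhohyp}(\Delta_i,\v)$ is a non-zero homomorphism, and then to conclude by a genericity argument. For each $i$, set $K_i=\ker((\res_i)_\ast)\subset H^1_{\rhohyp}(\Gamma,\v)$.  A class $[z]$ restricts non-trivially to every cusp precisely when $[z]\notin \bigcup_{i=1}^k K_i$.  Since a real vector space is never a finite union of proper subspaces, such a class exists provided each $K_i$ is a proper subspace, equivalently provided each $(\res_i)_\ast$ is non-zero.

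The main step is therefore to rule out the possibility that $(\res_i)_\ast=0$ for some $i$.  A naive dimension count from Lemma \ref{vcohomology} is not enough: the image of $\res_\ast$ is $k$-dimensional in the $2k$-dimensional target $H^1_{\rhohyp}(\Delta,\v)=\bigoplus_i H^1_{\rhohyp}(\Delta_i,\v)$, which when $k\geq 2$ leaves room for the image to sit inside a $(2k-2)$-dimensional coordinate subspace.  To exclude this, I would invoke Poincar\'e--Lefschetz duality for $(M,\partial M)$ with coefficients in $\v$.  The restriction of the trace (or Killing) form on $\g$ furnishes a non-degenerate $\SO(3,1)$-invariant symmetric bilinear form on $\v$, and combined with the cup product it induces a skew-symmetric pairing on $H^1(\partial M;\v)$.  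By the familiar half-lives-half-dies argument applied to the long exact sequence of $(M,\partial M)$, the image of $\res_\ast$ is Lagrangian with respect to this pairing; this is exactly the analogue, with coefficients in $\v$, of the Lagrangian statement for $\so(3,1)$-coefficients quoted after Lemma \ref{so31cohomology}.  Because the cusps of $M$ are disjoint, the pairing decomposes as a direct sum of pairings on the individual $H^1_{\rhohyp}(\Delta_i,\v)$, and each summand is non-degenerate by Poincar\'e duality on the torus $\Delta_i$ together with the self-duality of $\v$.  Each $H^1_{\rhohyp}(\Delta_i,\v)$ is therefore a $2$-dimensional symplectic vector space, so $H^1_{\rhohyp}(\Delta,\v)$ is a symplectic vector space of dimension $2k$ with a canonical splitting into symplectic summands.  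A Lagrangian subspace of such a split symplectic space has dimension $k$, and if it projected trivially to some summand it would be a $k$-dimensional isotropic subspace of the complementary $(2k-2)$-dimensional symplectic subspace, exceeding the maximal isotropic dimension $k-1$.  This contradiction shows $(\res_i)_\ast\neq 0$ for each $i$, completing the proof.

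The main obstacle is the careful verification that the cup-product pairing is non-degenerate on each $H^1_{\rhohyp}(\Delta_i,\v)$ and that the image of $\res_\ast$ really is Lagrangian in the resulting symplectic structure; once these two structural statements are in place, the remaining argument is linear algebra and the genericity reduction.  Both are standard Poincar\'e--Lefschetz facts for oriented 3-manifolds with torus boundary equipped with a self-dual local system, and they parallel verbatim the symplectic picture for $\so(3,1)$-coefficients already cited from \cite{HePo}, so the expected difficulty is merely to transcribe that argument to the $\v$-coefficient setting rather than to introduce any genuinely new ingredient.
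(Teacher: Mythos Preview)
Your argument is correct. The paper does not supply its own proof of this lemma; it simply cites Lemma~4.4 of \cite{BDL}. The approach you outline---showing that the image of $\res_\ast$ is a Lagrangian (equivalently, $k$-dimensional isotropic) subspace of the symplectic space $H^1_{\rhohyp}(\Delta,\v)=\bigoplus_i H^1_{\rhohyp}(\Delta_i,\v)$ and then observing that such a subspace cannot miss a symplectic summand---is exactly the standard ``half-lives/half-dies'' argument, and it is the argument used in \cite{BDL}. Your check that the Killing form restricts non-degenerately to $\v$ (so that $\v$ is self-dual and Poincar\'e duality applies) and the final linear-algebra step that no vector space is a finite union of proper subspaces are both fine.
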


We can now prove Theorem \ref{mainthm}.

\begin{proof}[Proof of Theorem \ref{mainthm}]
By Lemma \ref{eachcuspnontriv} we can find a cohomology class $[w]\in H^1(\Gamma,\v^{\rhohyp})$ with the property that $(\res_i)_\ast[w]\in H^1(\Delta_i,\v^{\rhohyp})$ is non-trivial for each $1\leq i\leq k$. Furthermore, by Lemma \ref{translemma} there is $[z]\in V$ such that $\pi_{\v}\circ \res_\ast([z])=[w]$. It follows that for each $1\leq i\leq k$ that either $c_{\gamma_1^i}$ or $c_{\gamma_2^i}$ is non-zero and thus $II_{[z]}=\{1,\ldots,k\}$. Applying Theorem \ref{slicecoordtyperelationship}(2) gives the desired conclusion.
\end{proof}

\subsection{Symmetry and type 1 cusps}

One consequence of Theorem \ref{slicecoordtyperelationship} is that if the slice coordinates of a cohomology class $[z]\in H^1(\Gamma,\v^{\rhohyp})$ are all non-zero then the resulting convex projective structures corresponding to $[z]$ have all type 2 cusps. A priori, a vector having all non-zero coordinates seems like a generic condition and so it is natural to wonder if Theorem \ref{mainthm2} ever produces examples with type 1 cusps. In this section we show that in certain circumstances it is possible to produce examples with type 1 cusps. More specifically, we prove a general result (Theorem \ref{achiralthm}) which says that for manifolds admitting certain types of symmetry, Theorem \ref{mainthm} produces convex projective manifolds where some of the cusps become type 1 generalized cusps. In Section \ref{examples} we use Theorem \ref{achiralthm} to show that if $K$ is the $6_3$ knot then $M=S^3\backslash K$ admits a properly convex projective structure where the cusp is type 1.

Before proceeding with the proof we discuss how orientation reversing symmetries of $M$ act on $\partial M$ and on $H^1(\Delta,\v^{\rhohyp})$. Let $\phi:M\to M$ be an orientation reversing symmetry and define 
$${S_\phi=\{i\in \{1,\ldots,k\}\mid \phi(\partial_i)=\partial_i\}}$$
 to be the set of cusps invariant under $\phi$. 

We first need to address some technicalities regarding how $\phi$ induces an action on the peripheral subgroups. The map $\phi$ gives rise to an outer automorphism $[\phi_\ast]\in \operatorname{Out}(\Gamma):=\operatorname{Aut}(\Gamma)/\operatorname{Inn}(\Gamma)$. We now describe how $[\phi_\ast]$ induces an action on $\Delta_i$ for each $i\in S_\phi$. Let $\phi_1,\phi_2\in [\phi_\ast]$, and so there is $g\in \Gamma$ such that $\phi_2(\gamma)=g\phi_1(\gamma)g^{-1}$ for any $\gamma\in \Gamma$. Since $\phi(\partial_i)=\partial_i$ there are $g_1,g_2\in \Gamma$ such that $g_j\phi_j(\Delta_i)g_j^{-1}=\Delta_i$ for $j\in \{1,2\}$. For $j\in \{1,2\}$, composing $\phi_j$ with conjugation by $g_j$ gives an automorphism of $\Delta_i$ and we claim that this map is independent of the choice of $\phi_j$ and $g_j$. To see this, observe that $g_1(g_2g)^{-1}$ normalizes $\Delta_i$. Since $\Gamma$ is the fundamental group of a finite volume hyperbolic 3-manifold the normalizer of $\Delta_i$ in $\Gamma$ is equal to the centralizer of $\Delta_i$ in $\Gamma$. This implies that $g_1(g_2g)^{-1}$ centralizes $\Delta_i$, and thus conjugation by $g_1$ and by $g_2g$ give rise to the same map from $\phi_1(\Delta_i)$ to $\Delta_i$. As a result, 
$$g_2\phi_2(\gamma)g_2^{-1}=g_2g\phi_1(\gamma)g_{-1}g_2^{-1}=g_1\phi_1(\gamma)g_1^{-1},$$
which proves the claim. By abuse of notation we will call this map $\phi_\ast:\Delta_i\to\Delta_i$. 

\begin{lemma}\label{involutionlemma}
	Let $\phi:M\to M$ be an orientation reversing symmetry and let $i\in S_\phi$ then $\phi:\partial_i\to\partial_i$ is isotopic to an involution. Furthermore, there exists a generating set $\{\gamma_+^i,\gamma_-^i\}$ for $\pi_1(\partial_i)$ such that $\phi_\ast(\gamma_{\pm}^i)=(\gamma_{\pm}^i)^{\pm 1}$.  
\end{lemma}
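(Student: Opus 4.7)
The plan is to leverage that, by Mostow rigidity, an orientation-reversing symmetry $\phi$ is isotopic to an orientation-reversing isometry, so I may assume $\phi$ is itself an isometry. Restricted to the invariant cusp $\partial_i$, $\phi$ preserves the horospherical foliation (any isometry of the cusp does, since the foliation is canonical up to normalization). A lift of $\phi$ to the universal cover $\HH^3$ fixes the parabolic fixed point of the cusp, and in upper half-space coordinates based there it acts as $(x, y, t) \mapsto (Ax + b, t)$ for some $A \in O(2)$ and $b \in \R^2$. The induced automorphism on $\pi_1(\partial_i) \cong \Z^2$ is then represented by $A$, viewed as an element of $\GL(2, \Z)$.

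For the first claim, I would observe that since $\phi$ is orientation-reversing on $M$, $A$ satisfies $\det A = -1$; being in $O(2) \cap \GL(2, \Z)$ it has finite order. The only possible finite orders in $\GL(2, \Z)$ are $1, 2, 3, 4, 6$, of which only order $2$ is compatible with determinant $-1$. Thus $A^2 = I$, and since the isotopy class of $\phi|_{\partial_i}$ on the torus (equivalently on $\partial_i \cong T^2 \times (0,\infty)$, after a radial isotopy) is captured by the action on $\pi_1$, $\phi|_{\partial_i}$ is isotopic to an involution.

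For the generating set, Cayley--Hamilton applied to a trace-zero determinant-$-1$ involution gives that $A$ has eigenvalues $+1$ and $-1$. The $\pm 1$-eigenspaces in $\R^2$ are rational lines (kernels of $A \mp I$), and each contains primitive lattice vectors; I would take $\gamma_+^i$ primitive in the $+1$-eigenspace of $A$ and $\gamma_-^i$ primitive in the $-1$-eigenspace. By construction $\phi_*(\gamma_+^i) = \gamma_+^i$ and $\phi_*(\gamma_-^i) = (\gamma_-^i)^{-1}$, so the eigenvalue/sign condition holds tautologically.

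The main obstacle is confirming that $\{\gamma_+^i, \gamma_-^i\}$ actually generates all of $\pi_1(\partial_i)$, not merely a finite-index sublattice. In $\GL(2,\Z)$ there are two conjugacy classes of orientation-reversing involutions: the ``diagonal'' class, conjugate to $\operatorname{diag}(1,-1)$, for which primitive eigenvectors automatically form a $\Z$-basis; and the ``swap'' class, conjugate to the coordinate exchange matrix, for which primitive eigenvectors span only an index-$2$ sublattice. The hardest part of the plan is ruling out the swap case --- presumably by invoking global geometric information about $\phi$ (for instance, analyzing its fixed set on $\partial_i$: two disjoint essential closed geodesics in the diagonal case versus a single one in the swap case) or by a further modification of $\phi$ within its isotopy class that closes up the index-$2$ discrepancy.
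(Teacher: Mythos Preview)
Your approach mirrors the paper's almost exactly: both reduce to analyzing a determinant $-1$ element $M_\phi\in\GL(2,\Z)$, use finiteness (you via the isometry lying in $O(2)$ on the cusp cross-section, the paper via Mostow rigidity forcing the mapping class group of $M$ to be finite) to constrain the eigenvalues to $\{\pm1\}$, conclude $M_\phi^2=I$ by Cayley--Hamilton, and then extract integral eigenvectors.

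You are right to flag the swap case, and in fact you have been more scrupulous here than the paper. The paper constructs eigenvectors via $\gamma_+^i=(M_\phi+I)\tilde\gamma_+^i$ and $\gamma_-^i=(M_\phi-I)\tilde\gamma_-^i$ and then simply asserts ``the set $\{\gamma_+^i,\gamma_-^i\}$ is the desired generating set,'' with no argument excluding the conjugacy class of $\left(\begin{smallmatrix}0&1\\1&0\end{smallmatrix}\right)$, for which any pair of primitive integral eigenvectors spans only an index-$2$ sublattice. So the gap you identified is present in the paper's own proof as well; nothing you overlooked would have closed it.

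For the paper's actual application (the amphichiral knot $6_3$ in Section~\ref{examples}) the issue is moot: the meridian and longitude already form a $\Z$-basis on which the symmetry acts as $\operatorname{diag}(-1,1)$, so one is in the diagonal case by inspection. But as a general lemma the ``generating set'' clause is not established by either argument.
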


\begin{proof}
	Since $\phi_\ast$ is an automorphism of $\Delta_i$ and $\Delta_i\cong \Z^2$, $\phi_\ast$ corresponds to an element $M_\phi\in \GL(2,\Z)$. Since $\phi$ is orientation reversing, it follows that $\det(M_\phi)=-1$. As such the characteristic polynomial of $M_\phi$ is $p_\phi(x)=x^2-\tr(M_\phi)-1$.

	By Mostow rigidity, the mapping class group of $M$ is finite, and so $M_\phi$ is a finite order element of $\GL(2,\Z)$. It follows that the roots, $\lambda_1,\lambda_2$, of $p_\phi(x)$ are roots of unity. Suppose that the roots of $p_\phi$ are non-real, then $\lambda_2=\overline{\lambda_1}$. However, since $\det(M_\phi)=-1$, we see that $-1=\lambda_1\overline{\lambda_1}=\abs{\lambda_1}^2$, which is a contradiction. Thus the roots of $p_\phi$ are real. Since $\lambda_1$ and $\lambda_2$ are real and $\det(M_\phi)=-1$, we find that $\{\lambda_1,\lambda_2\}=\{-1,1\}$. It follows $p_\phi(x)=x^2-1$, and is hence $\phi_\ast$ is an involution. Since $\GL(2,\Z)$ is the mapping class group of $\partial_i$ it follows that $\phi$ is isotopic to an involution when restricted to $\partial_i$. 
	
	It is clear that there are non-trivial $\pm 1$ eigenspace for the action of $\phi_\ast$ on $H^1(\partial_i,\R)$, and the proof will be complete if it can be shown that there are eigenvectors in $H^1(\partial_i,\Z)\cong \Delta_i$. Let $\tilde\gamma_+^i$ be a non-trivial element of $\Delta_i$ that is not a $-1$-eigenvector of $\phi_\ast$. By the Cayley-Hamilton theorem, $M_\phi$ is a root of its characteristic polynomial, and so $\gamma_+^i=(M_\phi+\operatorname{I})\tilde\gamma_+^i$ is a non-trivial $1$-eigenvector of $M_\phi$. Using a similar procedure we can construct a non-trivial $-1$-eigenvector $\gamma_{-}^i$. The set $\{\gamma_+^i,\gamma_-^i\}$ is the desired generating set. Using a similar construction we can produce the appropriate generating sets for the remaining $\phi$-invariant cusps, thus completing the proof.
	 \end{proof}

	The generators $\{\gamma_+^i,\gamma_-^i\}$ constructed in Lemma \ref{involutionlemma}  are called the \emph{$p$-curve and $m$-curve of the $i$th cusp with respect to $\phi$} (or simply the \emph{$p$-curve and $m$-curve} is $\phi$ and $i$ are clear from context). 
	
	If $i\in S_\phi$ then $\phi_\ast$ is an involution when restricted to $\Delta_i$. It is natural to wonder if $\phi$ induces an involution on $H^1(\Delta_i,\v^{\rhohyp})$. Strictly speaking, $\phi$ does not induce an action, but instead induces a map
	
	$$\phi^\ast:H^1(\Delta_i,\v^{\rhohyp})\to H^1(\Delta_i,\v^{\rhohyp\circ \phi_\ast})$$
	
	 However, by Mostow rigidity, there is a unique $A_\phi\in O(3,1)$ such that $\rhohyp\circ \phi_\ast=A_\phi\cdot \rhohyp$. Conjugation by $A_\phi^{-1}$ provides a map 
$$\Ad A_{\phi}^{-1}:H^1(\Delta_i,\v^{\rhohyp\circ \phi_\ast})\to H^1(\Delta_1,\v^{\rhohyp}).$$
Composing these two maps gives an automorphism of $H^1(\Delta_1,\v^{\rhohyp})$ which by abuse of notation we refer to as $\phi^\ast$. In the same way, we can view $\phi^\ast$ as an automorphism of $H^1(\Gamma,\v^{\rhohyp})$.

	It turns out that $\phi^\ast$ does act as an involution and that using $\gamma_+^i$ and $\gamma_-^i$ we can construct a nice eigenbasis for $H^1(\Delta_i,\v^{\rhohyp})$. We will need the following Lemma, which shows that if $M$ admits an orientation reversing symmetry then the cusp shape of an invariant cusp with respect to $\{\gamma_-^i,\gamma_+^i\}$ is purely imaginary. This is originally due to an observation of Riley \cite{Riley}.
	
	\begin{lemma}\label{pureimagcuspshape}
	Let $\phi:M\to M$ be an orientation reversing symmetry and let $i\in S_\phi$. Then the cusp shape of $\partial_i$ with respect to $\{\gamma_-^i,\gamma_+^i\}$ is $z=ic$, where $c>0$. Consequently, it is possible to conjugate $\rhohyp$ in $G$ so that 
\begin{equation}\label{imaginaryCuspShape}\rhohyp(\gamma_-^i)=\begin{pmatrix}
	1 & 1 & 0 & 1/2\\
	0 & 1 & 0 & 1\\
	0 & 0 & 1 & 0\\
	0 & 0 & 0 & 1
\end{pmatrix},{\rm\ and\ }
\rhohyp(\gamma_+^i)=\begin{pmatrix}
	1 & 0 & c & c^2/2\\
	0 & 1 & 0 & 0\\
	0 & 0 & 1 & c\\
	0 & 0 & 0 & 1
\end{pmatrix}
\end{equation}
	\end{lemma}
	
	\begin{proof}
		We can regard $\rhohyp$ as a representation from $\Gamma$ to $\PSL(2,\C)$ in such a way that 
		$$
		\rhohyp(\gamma_-^i)=\begin{pmatrix}
			1 & 1 \\
			0 & 1
		\end{pmatrix}{\rm\ and\ }
		\rhohyp(\gamma_+^i)=\begin{pmatrix}
		1 & z\\
		0 & 1	
\end{pmatrix},
$$
where $z$ is the cusp shape with respect to $\{\gamma_-^i,\gamma_+^i\}$, which by construction has positive imaginary part. By Mostow rigidity, there is an element $B_\phi\in PSL(2,\C)$ such that for each $\gamma\in \Gamma$, $\rhohyp(\phi_\ast(\gamma))=\overline{B_\phi\rhohyp(\gamma)B^{-1}_\phi}$, where $\overline{g}$ means entrywise complex conjugation. Since $\phi_\ast(\gamma_\pm^i)=(\gamma_{\pm}^i)^{\pm}$, it follows that $-z=\overline{z}$. In other words, $z$ is purely imaginary and thus $z=ic$ for $c>0$.
	\end{proof}

 Next, define two cocycles $z_+^i$, $z_-^i\in Z^1(\Delta_1,\v^{\rhohyp})$, by $z_{\pm}^i(\gamma_{\mp})=0$ and $z_{\pm}^i(\gamma_{\pm}^i)=a_{\pm}$, where
$$a_+=\begin{pmatrix}
	-c & & & \\
	   & -c & \\
	 & & 3c & \\
	 & & & -c
\end{pmatrix},{\rm\ and\ }
a_-=\begin{pmatrix}
	-1 & & & \\
	   & 3 & \\
	 & & -1 & \\
	 & & & -1
\end{pmatrix}.
$$
The following proposition shows that $[z_\pm^i]$ are nothing more than the slice basis for $H^1(\Delta_i,\v^{\rhohyp})$ with respect to $\{\gamma_+^i,\gamma_-^i\}$. 

\begin{proposition}
	The cohomology classes $[z_{\pm}^i]$ are the slice basis for $H^1(\Delta_i,\v^{\rhohyp})$ with respect to $\{\gamma_-^i,\gamma_+^i\}$
\end{proposition}

\begin{proof}
	To simplify notation we drop the $i$ superscripts. Since $\rhohyp(\gamma_-)$ and $\rhohyp(\gamma_+)$ are of the form \eqref{imaginaryCuspShape} it follows that $D_a(\gamma_+)-z_-(\gamma_+)=0$ and 
	$$D_a(\gamma_-)-z_-(\gamma_-)=\begin{pmatrix}
		0 & \frac{1}{4} & 0 & 0\\
		0 & 0 & 0 & -\frac{1}{4} \\
		0 & 0 & 0 & 0\\
		0 & 0 & 0 & 0
	\end{pmatrix}$$
	
	It follows that $(D_a-z_-)(\gamma)=v-\rhohyp(\gamma)\cdot v$, where 
	$$v=\begin{pmatrix}
		\frac{1}{16} & \frac{1}{8} & 0 & 0\\
		0 & -\frac{3}{16} & 0 & -\frac{1}{8}\\
		0 & 0 & \frac{1}{16} & 0\\
		0 & 0 & 0 & \frac{1}{16}
	\end{pmatrix}$$
\noindent It follows that $[D_a]=[z_-]$. A similar argument shows that $[D_b]=[z_+].$
\end{proof}

We can now show that $[z_\pm^i]$ are the desired eigenvectors of $\phi^\ast$.

\begin{lemma}\label{evectors}
 Suppose that $\phi_\ast:\Delta_i\to\Delta_i$ is induced by an orientation reversing symmetry of $M$ as above. Then $[z_\pm^i]$ is a $\pm 1$-eigenvector for the action of $\phi^\ast$ on $H^1(\Delta_1,\v^{\rhohyp})$. Furthermore, $\{[z_+^i],[z_-^i]\}$ is an eigenbasis for $H^1(\Delta_i,\v^{\rhohyp})$. 	
\end{lemma}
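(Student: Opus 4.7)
My plan is to make the action of $\phi^\ast$ on the slice basis explicit by first identifying the Mostow-rigidity conjugator $A_\phi \in O(3,1)$. Using the normal form from Lemma \ref{pureimagcuspshape}, write $\rhohyp(\gamma_-^i) = \exp N_-$ and $\rhohyp(\gamma_+^i) = \exp N_+$, where $N_-$ has nonzero entries only in positions $(1,2)$ and $(2,4)$ and $N_+$ has nonzero entries only in positions $(1,3)$ and $(3,4)$. The relations $\phi_\ast(\gamma_+^i) = \gamma_+^i$ and $\phi_\ast(\gamma_-^i) = (\gamma_-^i)^{-1}$, combined with $\rhohyp \circ \phi_\ast = A_\phi \cdot \rhohyp$, translate into $\Ad(A_\phi) N_+ = N_+$ and $\Ad(A_\phi) N_- = -N_-$. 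A direct inspection shows that the diagonal matrix $A_\phi = \operatorname{diag}(1,-1,1,1)$ solves both equations and lies in $O(3,1)$ (the form $J$ is preserved by any diagonal conjugation whose entries satisfy $d_1 d_4 = 1$).

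Using the combined formula $(\phi^\ast z)(\gamma) = A_\phi^{-1} z(\phi_\ast(\gamma)) A_\phi$ (obtained by composing ordinary pullback with conjugation by $A_\phi^{-1}$), I would next evaluate $\phi^\ast z_+^i$ on each generator. On $\gamma_+^i$ this gives $A_\phi^{-1} a_+ A_\phi = a_+$ since both matrices are diagonal; on $\gamma_-^i$ it gives $A_\phi^{-1} z_+^i((\gamma_-^i)^{-1}) A_\phi = 0$, since $z_+^i(\gamma_-^i) = 0$ forces $z_+^i((\gamma_-^i)^{-1}) = 0$ via the cocycle identity. Hence $\phi^\ast z_+^i = z_+^i$ already at the level of cocycles, so $[z_+^i]$ is a $+1$-eigenvector.

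The essential case is $[z_-^i]$. Here $(\phi^\ast z_-^i)(\gamma_+^i) = 0$ by the same reasoning, while at $\gamma_-^i$ the cocycle identity gives $z_-^i((\gamma_-^i)^{-1}) = -\rhohyp(\gamma_-^i)^{-1} a_- \rhohyp(\gamma_-^i)$. Since $A_\phi$ commutes with the diagonal matrix $a_-$ and satisfies $A_\phi \rhohyp(\gamma_-^i) A_\phi^{-1} = \rhohyp(\gamma_-^i)^{-1}$, the expression $A_\phi^{-1} z_-^i((\gamma_-^i)^{-1}) A_\phi$ collapses to $-\rhohyp(\gamma_-^i) a_- \rhohyp(\gamma_-^i)^{-1}$. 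Thus the difference $\phi^\ast z_-^i + z_-^i$ vanishes on $\gamma_+^i$ and equals $a_- - \rhohyp(\gamma_-^i) \cdot a_-$ on $\gamma_-^i$. A direct check confirms $a_- \in \v$ and $[N_+, a_-] = 0$, so $\rhohyp(\gamma_+^i) \cdot a_- = a_-$. Consequently the difference cocycle is precisely the coboundary $\gamma \mapsto a_- - \rhohyp(\gamma) \cdot a_-$, and $\phi^\ast [z_-^i] = -[z_-^i]$ in cohomology.

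Finally, since $H^1_{\rhohyp}(\Delta_i, \v)$ is $2$-dimensional by Lemma \ref{vcohomology}, and $\{[z_+^i], [z_-^i]\}$ (a slice basis, hence already a basis) consists of eigenvectors with distinct eigenvalues $\pm 1$, it is the required eigenbasis. The hardest part is the $-1$-eigenvalue computation, where one must combine three separate commutation relations---that $A_\phi$ centralizes the diagonal $a_-$, that $A_\phi$ inverts $\rhohyp(\gamma_-^i)$, and that $\rhohyp(\gamma_+^i)$ centralizes $a_-$---in order to recognize the residual discrepancy as an explicit coboundary with cochain $a_-$.
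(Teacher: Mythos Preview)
Your argument is internally consistent once you fix $A_\phi=\operatorname{diag}(1,-1,1,1)$, but that choice is precisely the gap. The automorphism $\phi^\ast$ on $H^1_{\rhohyp}(\Delta_i,\v)$ is defined in the paper using the \emph{global} Mostow-rigidity conjugator: the unique $A_\phi\in O(3,1)$ with $\rhohyp\circ\phi_\ast=A_\phi\cdot\rhohyp$ on all of $\Gamma$. After conjugating $\rhohyp$ into the normal form of Lemma~\ref{pureimagcuspshape}, this global $A_\phi$ satisfies the two peripheral relations $\Ad(A_\phi)N_+=N_+$ and $\Ad(A_\phi)N_-=-N_-$, but it is not determined by them: every element of the coset $\operatorname{diag}(1,-1,1,1)\cdot T(0)$ satisfies them as well, and the paper accordingly works with the general two-parameter form
\[
A_\phi=\begin{pmatrix}
1 & -x & y & \tfrac{x^2+y^2}{2}\\
0 & -1 & 0 & x\\
0 & 0 & 1 & y\\
0 & 0 & 0 & 1
\end{pmatrix}.
\]
With this general $A_\phi$ your first step already fails: $A_\phi^{-1}a_+A_\phi\neq a_+$ when $y\neq0$, so $\phi^\ast z_+^i$ does \emph{not} equal $z_+^i$ at the cocycle level, and an explicit coboundary (the paper's $v_+$) must be produced. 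Similarly for $z_-^i$ one needs the paper's $v_-$ rather than $a_-$ alone.

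Your approach can be rescued, but only by proving an additional lemma: that the centralizer $T(0)$ of $\rhohyp(\Delta_i)$ in $O(3,1)$ acts trivially on $H^1_{\rhohyp}(\Delta_i,\v)$, so that any peripheral conjugator induces the same map on cohomology as the global $A_\phi$. This is true, but it is itself a coboundary computation of roughly the same complexity as the paper's direct argument---so you don't actually save work, you just relocate it. The paper's route of carrying the parameters $x,y$ through and exhibiting $v_\pm$ explicitly is the more honest bookkeeping.
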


\begin{proof}
Again, to simplify notation we drop the the $i$ scripts on the $z_\pm$, $\gamma_{\pm}$, and $\Delta$. First, since $\phi_\ast$ leaves $\Delta$ invariant and $\phi_\ast(\gamma_{\pm})=\gamma_{\pm}^{\pm 1}$ it follows that there are $x,y\in \R$ so that

$$A_{\phi}=\begin{pmatrix}
	1 & -x & y &\frac{x^2+y^2}{2}\\
	0 & -1 & 0 & x\\
	0 & 0 & 1 & y\\
	0 & 0 & 0 & 1
\end{pmatrix}
$$

We first show that $[\phi^\ast(z_+)]=[z_+]$. By the discussion above we see that at the level of cocyles that $(\phi^\ast(z_+))(\gamma_-)=0$. Furthermore,
$$\phi^\ast(z_+)(\gamma_+)=A_\phi^{-1}\cdot z_+(\phi(\gamma_+))=A^{-1}_\phi\cdot z_+(\gamma_+)=A_\varphi^{-1}\cdot a_+.$$
Therefore, 
$$(\phi^\ast(z_+)-z_+)(\gamma_+)=A_{\phi}^{-1}\cdot a_+-a_+=\begin{pmatrix}
	0 & 0 & -4y & -4y^2\\
	0 & 0 & 0 & 0\\
	0 & 0 & 0 & 4y\\
	0 & 0 & 0 & 0
\end{pmatrix}.$$
We now show that $(\phi^\ast(z_+)-z_+)$ is a coboundary. Let 
$$v_+=\begin{pmatrix}
	-\frac{y}{c} & 0 & -\frac{2y(c+y)}{c} & 0\\
	0 & -\frac{y}{c} & 0 & 0\\
	0 & 0 & \frac{3y}{c} & \frac{2y(c+y)}{c}\\
	0 & 0 & 0 & -\frac{y}{c}	
\end{pmatrix}
$$
Consider the coboundary $w_+(\gamma)=v_+-\rhohyp(\gamma)\cdot v_+$. Computing, one sees that $\rhohyp(\gamma_-)$ commutes with $v_+$ and so $w_+(\gamma_-)=0$, and also that 
$$w_+(\gamma_+)=v_+-\rhohyp(\gamma_+)\cdot v_+=\begin{pmatrix}
	0 & 0 & -4y & -4y^2\\
	0 & 0 & 0 & 0\\
	0 & 0 & 0 & 4y\\
	0 & 0 & 0 & 0
\end{pmatrix},$$
and so $w_+=\phi^\ast(z_+)-z_+$. Thus $[z_+]$ is a 1-eigenvector of $\phi^\ast$. 

The other case is similar. Computing shows that $(\phi^\ast(z_-)+z_-)(\gamma_+)=0$. Using the cocycle condition gives $z_-(\gamma_-^{-1})=-\rhohyp(\gamma_-)^{-1}\cdot z_-(\gamma_-)$ we find that 
$$(\phi^\ast(z_-)+z_-)(\gamma_-)=A_{\phi}^{-1}\cdot z_-(\gamma_-^{-1})+z_-(\gamma_-)=-A_{\phi}^{-1}\rhohyp(\gamma_-)^{-1}\cdot a_-+a_-=\begin{pmatrix}
			0 & -4(1+x) & 0 & 4(1+x)^2\\
			0 & 0 & 0 & 4(1+x)\\
			0 & 0 & 0 & 0\\
			0 & 0 & 0 & 0
		\end{pmatrix}.
$$ 
Let 
$$v_-=\begin{pmatrix}
	-1-x & 2x(1+x) & 0 & 0\\
	0 & 3+3x & 0 & -2x(1+x)\\
	0 & 0 & -1-x & 0\\
	0 & 0 & 0 & -1-x 
\end{pmatrix}$$
and let $w_-(\gamma)=v_--\rhohyp(\gamma)\cdot v_-$, then as before we see that $w_-=\phi^\ast(z_-)+z_-$, and so $[z_-]$ is a $-1$-eigenvector of $\phi^\ast$. Finally, $H^1(\Delta_1,\v^{\rhohyp})$ is a 2-dimensional vector space and $[z_\pm]$ are non-trivial eigenvectors with different eigenvalue and so they must be linearly independent, and hence a basis. 	
\end{proof}

	 We can now state the main theorem of this section that describes when a manifold admitting an orientation reversing symmetry admits a convex projective structure with type 1 cusps. 
	 
	 \begin{theorem}\label{achiralthm}
Let $M$ be an infinitesimally rigid rel.\ $\partial M$ and let $\phi:M\to M$ be an orientation reversing symmetry that leaves each cusp invariant. If $\phi^\ast:H^1(\Gamma,\v^{\rhohyp})\to H^1(\Gamma,\v^{\rhohyp})$ is the identity map then there is a properly convex projective structure on $M$ where each cusp is type 1.
\end{theorem}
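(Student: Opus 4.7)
Normalize $\rhohyp$ using Lemma~\ref{pureimagcuspshape} and take the $p/m$-curves $\{\gamma_-^i,\gamma_+^i\}$ of Lemma~\ref{involutionlemma} as the generators setting up the slices $\Sc_i$, so that $\rhohyp|_{\Delta_i}$ sits at the base point $s_i=(0,0,1,0,0,c_i)$. A direct comparison with \eqref{middlemat} and the definition of $a_\pm$ shows that under this choice the slice basis of $H^1_{\rhohyp}(\Delta_i,\v)$ coincides, up to nonzero scalars, with the $\phi^\ast$-eigenbasis of Lemma~\ref{evectors}: $[D_a]=\tfrac{1}{4}[z_-^i]$ lies in the $(-1)$-eigenspace and $[D_b]=\tfrac{c_i}{4}[z_+^i]$ lies in the $(+1)$-eigenspace of $\phi^\ast$.

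Apply Theorem~\ref{repdef} to obtain the $k$-dimensional family $\mathcal{F}=\{\rho_u:u\in V\}$. By Lemma~\ref{translemma} together with infinitesimal rigidity, $\pi_\v\colon V\to H^1_{\rhohyp}(\Gamma,\v)$ is an isomorphism. The map $\phi^\ast$ commutes with $\pi_\v$ and with $\res_\ast$ because $A_\phi\in O(3,1)$ preserves the splitting $\g=\so(3,1)\oplus\v$; since $\phi^\ast$ acts as the identity on $H^1_{\rhohyp}(\Gamma,\v)$ by hypothesis, the image $\pi_\v(\res_\ast V)$ is contained in the $(+1)$-eigenspace $\bigoplus_i\langle[z_+^i]\rangle$ of $\phi^\ast$ on $H^1_{\rhohyp}(\Delta,\v)$. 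In slice coordinates this says $c_{\gamma_-^i}=0$ for every $u\in V$ and every $i$, i.e.\ $\dot a_i\equiv 0$ at $u=0$. Invoking Lemma~\ref{eachcuspnontriv}, choose $[w]\in H^1_{\rhohyp}(\Gamma,\v)$ with nontrivial restriction to each cusp and let $u_0:=\pi_\v^{-1}[w]\in V$; then $\dot b_i(u_0)\neq 0$ for every $i$.

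The main technical step is to upgrade the path $\rho_{tu_0}$ to an honestly $\phi$-equivariant family. Replace $u_0$ by its symmetrization $u_0':=\tfrac{1}{2}(u_0+\phi^\ast u_0)$, which lies in the $(+1)$-eigenspace of $\phi^\ast$ on $H^1_{\rhohyp}(\Gamma,\g)$ and, because $\phi^\ast$ is trivial on the $\v$-summand, has the same $\v$-part (and the same slice coordinates) as $u_0$. Using that $\rhohyp$ is a smooth point of $\hom(\Gamma,G)$ (Thm.~3.2 of \cite{BDL}) together with the standard fact that the fixed locus near $\rhohyp$ of the local involution $\Phi(\rho)=A_\phi^{-1}(\rho\circ\phi_\ast)A_\phi$ on $\hom(\Gamma,G)$ is a smooth submanifold whose Zariski tangent space is the $(+1)$-eigenspace of $\phi^\ast$, we integrate $u_0'$ to a smooth path $\tilde\rho_t$ together with a smooth family $A(t)\in G$, $A(0)=A_\phi$, satisfying $A(t)\tilde\rho_t(\gamma)A(t)^{-1}=\tilde\rho_t(\phi_\ast\gamma)$ for every $\gamma\in\Gamma$ and every small $t$. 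When $\phi$ is not literally an involution of $M$, one first passes to an appropriate power of $\phi$ inside the finite mapping class group of $M$; Mostow rigidity then provides a fixed element of $G$ showing $\Phi^2$ is inner, so the fixed-point-submanifold argument still applies.

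Evaluating the equivariance at $\gamma=\gamma_-^i$, where $\phi_\ast(\gamma_-^i)=(\gamma_-^i)^{-1}$, gives $A(t)\tilde\rho_t(\gamma_-^i)A(t)^{-1}=\tilde\rho_t(\gamma_-^i)^{-1}$, so the eigenvalue multiset of $\tilde\rho_t(\gamma_-^i)$ is closed under inversion. Conjugate $\tilde\rho_t|_{\Delta_i}$ into $\Sc_i$ using a conjugation chosen smoothly in $t$ and compatible with the slice-level involution $(a,b,x_1,y_1,x_2,y_2)\mapsto(a,b,-x_1,-y_1,x_2,y_2)$ induced by $\phi_\ast$ on $\Delta_i$; since the base point $s_i$ already satisfies $y_1^i=0$ and this condition cuts out a smooth $\phi^\ast$-invariant subslice through $s_i$, the conjugation can be arranged to keep $y_1^i(t)\equiv 0$ along the family. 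The eigenvalues of $\tilde\rho_t(\gamma_-^i)$ then simplify to $(1,e^{a_i x_1^i},1,1)$, and inversion-symmetry combined with $x_1^i(t)$ staying close to $1$ forces $a_i\equiv 0$ along the deformation. Combined with $\dot b_i(u_0)\neq 0$, this yields $a_i(t)=0$ and $b_i(t)\neq 0$ for all small $t\neq 0$, so by Theorem~\ref{sliceisgencuspholonomy} every cusp of $\tilde\rho_t$ is of type~1; a final application of Theorem~\ref{cltdefthm}, exactly as in the proof of Theorem~\ref{mainthm2}, promotes $\tilde\rho_t$ to the holonomy of a convex projective structure on $M$ with only type~1 ends. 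The main obstacles are the construction of the $\phi$-equivariant integration in the third paragraph and the $y_1^i\equiv 0$ normalization in the fourth, both of which require the careful choice of slicing compatible with the slice involution induced by $\phi$.
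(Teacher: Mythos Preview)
Your argument has a real gap in the passage from paragraph 3 to paragraph 4. The path $\tilde\rho_t$ you build lives in the fixed locus of the involution $\Phi$ on $\hom(\Gamma,G)$ (or $\rep(\Gamma,G)$), but nothing in that construction forces $\tilde\rho_t|_{\Delta_i}$ to be conjugate into the slice $\Sc_i$. The slice has positive codimension in $\hom(\Delta_i,G)$ even after passing to conjugacy classes (its image in $H^1_{\rhohyp}(\Z^2,\g)$ is $4$-dimensional inside a $6$-dimensional space, by Corollary~\ref{sliceinhomol}), so a generic deformation of $\rhohyp$ does \emph{not} restrict to a generalized cusp holonomy. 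Your sentence ``Conjugate $\tilde\rho_t|_{\Delta_i}$ into $\Sc_i$'' is therefore unjustified, and without it you cannot invoke Lemma~\ref{type1criteria}, Theorem~\ref{sliceisgencuspholonomy}, or Theorem~\ref{cltdefthm}. The subsequent normalization $y_1^i(t)\equiv 0$ is also not clearly achievable (the only conjugacy freedom inside $\Sc$ near $\Scusp$ is the $1$-dimensional rotation of Proposition~\ref{sliceintersection}), and in any case it is unnecessary once the conjugacy $\rho_t\circ\phi_\ast\sim\rho_t$ is established---Lemma~\ref{type1criteria} handles the eigenvalue analysis for you.

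The paper closes this gap differently, and more economically. Rather than build a new equivariant family, it shows that the family $\mathcal{F}$ already produced by Theorem~\ref{repdef} is $\phi$-invariant: $\widetilde\res$ is $\phi$-equivariant and $\Sigma$ is $\phi$-invariant, so $\phi$ preserves $[\mathcal{F}]$ and fixes $[\rhohyp]$. It then observes (from the explicit form of the tangent cocycles, which lie in $\langle[z_+^i]\rangle$ by your paragraph 2) that the map sending $[\rho_u]$ to the tuple of distinguished eigenvalues $v_i$ of $\rho_u(\gamma_+^i)$ is a local diffeomorphism near $[\rhohyp]$. Since $\phi_\ast(\gamma_+^i)=\gamma_+^i$, the element $\rho_u\circ\phi_\ast$ has the same $v_i$'s as $\rho_u$, hence $[\rho_u\circ\phi_\ast]=[\rho_u]$ for \emph{every} $u$. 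Now Lemma~\ref{type1criteria} applies directly to $\rho_u|_{\Delta_i}\in\Sc_i$, giving type~0 or type~1 at each cusp; Lemma~\ref{eachcuspnontriv} rules out type~0. The key point you missed is that staying inside $\mathcal{F}$ guarantees the boundary restrictions lie in $\Sc_i$ for free, whereas your fixed-locus construction throws this away.
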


Theorem \ref{achiralthm} has the following Corollary. 

\begin{corollary}\label{achiralcor}
	Suppose that $M$ has a single cusp and that $\phi:M\to M$ is an orientation reversing symmetry and that $\gamma_+$ is a $p$-curve for $\phi$. If $M$ is infinitesimally rigid rel.\ $\partial M$ and the map $\res_\ast:H^1(\Gamma,\v^{\rhohyp})\to H^1(\gamma_+,\v^{\rhohyp})$ is nontrivial then $M$ admits nearby convex projective structures where the cusp is a type 1 generalized cusp. 
\end{corollary}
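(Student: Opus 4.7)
My plan is to show that the hypotheses of the corollary already imply those of Theorem~\ref{achiralthm}, from which the type 1 conclusion is immediate. Since $M$ has a single cusp, any self-symmetry $\phi$ automatically leaves that cusp invariant, so the peripheral hypothesis of Theorem~\ref{achiralthm} comes for free; hence the task reduces to proving that $\phi^\ast$ acts as the identity on $H^1_{\rhohyp}(\Gamma,\v)$.

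First, I will observe that by Lemma~\ref{vcohomology}, $\res_\ast(H^1_{\rhohyp}(\Gamma,\v))$ is one-dimensional (since $k=1$), while infinitesimal rigidity rel.\ $\partial M$ forces $\res_\ast$ to be injective on $\v$-cohomology; together these imply $\dim H^1_{\rhohyp}(\Gamma,\v)=1$. Next I would verify that the restriction map $\res_\ast$ is $\phi^\ast$-equivariant. This is essentially a diagram chase: the definitions of $\phi^\ast$ on both cohomology groups factor through the outer-automorphism representative $\phi_\ast$ together with conjugation by the Mostow-rigid intertwiner $A_\phi$, and restriction commutes with both operations.

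Consequently, the image $L:=\res_\ast(H^1_{\rhohyp}(\Gamma,\v))$ is a $\phi^\ast$-invariant line inside $H^1_{\rhohyp}(\Delta,\v)$. By Lemma~\ref{evectors}, $\{[z_+],[z_-]\}$ is an eigenbasis with eigenvalues $+1$ and $-1$, so $L$ must coincide with either $\R[z_+]$ or $\R[z_-]$. Since $z_-(\gamma_+)=0$ by definition, the class $[z_-]$ restricts trivially under $H^1_{\rhohyp}(\Delta,\v)\to H^1_{\rhohyp}(\gamma_+,\v)$; so $\R[z_-]$ lies in the kernel of this further restriction. The hypothesis of the corollary --- that the total map $H^1_{\rhohyp}(\Gamma,\v)\to H^1_{\rhohyp}(\gamma_+,\v)$ is nontrivial --- then rules out $L=\R[z_-]$ and forces $L=\R[z_+]$. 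Because $\res_\ast$ is an equivariant isomorphism onto $L$, it transports the identity action of $\phi^\ast$ on $\R[z_+]$ back to $H^1_{\rhohyp}(\Gamma,\v)$, completing the reduction and allowing us to invoke Theorem~\ref{achiralthm} to produce the desired convex projective structures (which are nearby because Theorem~\ref{achiralthm} is proved via the deformation family of Theorem~\ref{mainthm2}).

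The one step that genuinely needs care is the $\phi^\ast$-equivariance of $\res_\ast$, since $\phi^\ast$ depends on the choice of both a representative of $[\phi_\ast]\in\operatorname{Out}(\Gamma)$ and an intertwiner $A_\phi\in O(3,1)$. Fortunately, the discussion preceding Lemma~\ref{involutionlemma} already establishes that the induced map on the peripheral cohomology is canonical; essentially the same argument --- using that the normalizer of a peripheral subgroup equals its centralizer --- gives canonicity and equivariance in our setting. Everything else amounts to two-dimensional linear algebra on $H^1_{\rhohyp}(\Delta,\v)$.
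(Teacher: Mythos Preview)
Your argument is correct and follows essentially the same route as the paper: both reduce to Theorem~\ref{achiralthm} by using infinitesimal rigidity to make $H^1_{\rhohyp}(\Gamma,\v)$ one-dimensional, using $\phi^\ast$-equivariance of $\res_\ast$ (which the paper records as the commutative diagram in Lemma~\ref{phiaction}) to force the image to be one of the eigenlines $\R[z_\pm]$, and then using the nontriviality hypothesis on $\gamma_+$ to rule out $\R[z_-]$. Your added care about the canonicity of $\phi^\ast$ and the equivariance is welcome but does not depart from the paper's approach.
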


\begin{proof}
Since $M$ has a single cusp, $\Delta=\Delta_1$, it follows trivially that each cusp is preserved by $\phi$. Since $M$ is infinitesimally rigid rel.\ $\partial M$ it follows that $H^1(\Gamma,v)$ is $1$-dimensional. The image of $H^1(\Gamma,\v^{\rhohyp})$ in $H^1(\Delta,\v^{\rhohyp})$ is $\phi^\ast$ invariant, and is thus spanned by either $[z_+]$ or $[z_-]$. By hypothesis, $\res_\ast:H^1(\Gamma,\v^{\rhohyp})\to H^1(\gamma_+,\v^{\rhohyp})$ is non-trivial, but the image of $[z_-]$ is trivial in $H^1(\langle\gamma_+\rangle,\v^{\rhohyp})$, and thus $\res_\ast(H^1(\Gamma,\v^{\rhohyp})$ is spanned by $[z_+]$. It follows that $\phi^\ast:H^1(\Gamma,\v^{\rhohyp})\to H^1(\Gamma,\v^{\rhohyp})$ is the identity. The result follows by applying Theorem \ref{achiralthm}. 

\end{proof}

 Before proving Theorem \ref{achiralthm} we need a couple of auxiliary lemmas. The first Lemma allows us to identify $\res_\ast(H^1(\Gamma,\v^{\rhohyp}))$ inside $H^1(\Delta,\v^{\rhohyp})$. 

\begin{lemma}\label{phiaction}

Suppose that $M$ is infinitesimally rigid rel.\ $\partial M$ and let $\phi:M\to M$ be an orientation reversing symmetry that preserves each cusp. Then $\phi^\ast:H^1(\Gamma,\v^{\rhohyp})\to H^1(\Gamma,\v^{\rhohyp})$ is an involution. Moreover, there is an eigenbasis $\B=\{v_1,\ldots,v_k\}$ such that $\res_\ast(v_i)=[z_{e(i)}^i]$, where $e:\{1,\ldots,k\}\to \{+,-\}$ is the function that returns $+1$ if $v_i$ is a $+1$-eigenvector and $-$ if $v_i$ is a $-1$-eigenvectors. 
\end{lemma}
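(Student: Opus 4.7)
My plan is to establish involutivity of $\phi^\ast$ first, and then to locate the asserted eigenbasis by combining the $\phi^\ast$-equivariance of $\res_\ast$ with the eigendecomposition $\{[z_+^i],[z_-^i]\}$ of each peripheral factor provided by Lemma \ref{evectors}.

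First I would check that $(\phi^\ast)^2=\mathrm{id}$ on $H^1_{\rhohyp}(\Gamma,\v)$. By Lemma \ref{involutionlemma} the map $\phi_\ast$ acts on each $\Delta_i$ as an involution, so (using the definition $\phi^\ast=\Ad(A_\phi^{-1})\circ(\phi_\ast)^\ast$ recalled just before Lemma \ref{evectors}) $\phi^\ast$ squares to the identity on each $H^1_{\rhohyp}(\Delta_i,\v)$, hence on the direct sum $H^1_{\rhohyp}(\Delta,\v)$. The map $\res_\ast$ is $\phi^\ast$-equivariant by functoriality of pullback, and infinitesimal rigidity together with the splitting $\g=\so(3,1)\oplus\v$ and Lemma \ref{so31cohomology} makes $\res_\ast$ injective on $H^1_{\rhohyp}(\Gamma,\v)$; Lemma \ref{vcohomology} then forces $\dim H^1_{\rhohyp}(\Gamma,\v)=k$. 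An equivariant injection into a space on which an automorphism squares to the identity forces the same on the source, so $(\phi^\ast)^2=\mathrm{id}$ on $H^1_{\rhohyp}(\Gamma,\v)$.

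Once $\phi^\ast$ is an involution, decompose $H^1_{\rhohyp}(\Gamma,\v)=H^+\oplus H^-$. The image $W:=\res_\ast(H^1_{\rhohyp}(\Gamma,\v))$ is a $k$-dimensional $\phi^\ast$-invariant subspace of $H^1_{\rhohyp}(\Delta,\v)=V^+\oplus V^-$, where $V^\pm:=\bigoplus_i\R[z_\pm^i]$, so $W=W^+\oplus W^-$ with $W^\pm\subset V^\pm$. To build $\B$, I would exhibit, for each cusp index $i$, an eigenvector of $\phi^\ast$ lying in $W$ of the form $[z_{e(i)}^i]$ for some sign $e(i)$; the pullback $v_i:=\res_\ast^{-1}([z_{e(i)}^i])$ is then an eigenvector in $H^1_{\rhohyp}(\Gamma,\v)$ with the correct eigenvalue, and by a dimension count the resulting collection $\{v_1,\ldots,v_k\}$ is automatically a basis. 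The tools I would invoke are the cup-product symplectic form $\omega$ on $H^1_{\rhohyp}(\Delta,\v)$ coming from Poincar\'e--Lefschetz duality and the Killing form on $\v$ (compare \cite[\S 5]{HePo}); with respect to $\omega$ the set $\{[z_\pm^i]\}$ is a Darboux basis and $W$ is Lagrangian, and since $\phi$ reverses the orientation of $\partial M$ the action $\phi^\ast$ is anti-symplectic, forcing $V^+$ and $V^-$ themselves to be Lagrangian complements.

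The hard step will be the per-cusp alignment: showing that $W^+$ (resp.\ $W^-$) meets each coordinate axis $\R[z_+^i]$ (resp.\ $\R[z_-^i]$) rather than being a generic Lagrangian subspace of $V^+$. My plan to attack this is to exploit the cusp-wise block structure of both $\omega$ and $\phi^\ast$ together with Lemma \ref{eachcuspnontriv}, which rules out trivial behaviour at any single cusp, and to use the symplectic duality $W^-=(W^+)^{\perp_\omega}$ to transfer coordinate structure between $W^+$ and $W^-$. I expect an induction on the number of cusps -- peeling off one cusp at a time via symplectic reduction on the $2$-dimensional factor $H^1_{\rhohyp}(\Delta_i,\v)$ -- to deliver the alignment, and this bookkeeping is where the bulk of the work will live.
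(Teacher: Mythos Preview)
Your involutivity argument is the paper's: equivariance of $\res_\ast$ together with its injectivity (from infinitesimal rigidity) transfers $(\phi^\ast)^2=\mathrm{id}$ from $H^1_{\rhohyp}(\Delta,\v)$ back to $H^1_{\rhohyp}(\Gamma,\v)$. For the eigenbasis the paper is far terser than your plan: after observing that $W:=\res_\ast(H^1_{\rhohyp}(\Gamma,\v))$ is $\phi^\ast$-invariant, it simply asserts that $W$ admits a basis $\B'$ drawn from the coordinate eigenvectors $\{[z_+^1],[z_-^1],\ldots,[z_+^k],[z_-^k]\}$, and then invokes pigeonhole together with Lemma~\ref{eachcuspnontriv} to rule out both $[z_+^i]$ and $[z_-^i]$ appearing for the same $i$. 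You are right to flag the passage from ``$\phi^\ast$-invariant'' to ``spanned by coordinate eigenvectors'' as the crux; invariance alone only yields $W=W^+\oplus W^-$ with $W^\pm\subset V^\pm$.

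That said, the machinery you propose does not close this gap either. Even granting that $W$ is Lagrangian for the cup-product form, that $\phi^\ast$ is anti-symplectic, and that consequently $W^-=(W^+)^{\perp_\omega}$, coordinate alignment is not forced. For $k=2$ in a Darboux basis take $W^+=\R([z_+^1]+[z_+^2])$ and $W^-=\R([z_-^1]-[z_-^2])$: then $W=W^+\oplus W^-$ is a $\phi^\ast$-invariant Lagrangian, the element $[z_+^1]+[z_+^2]$ restricts nontrivially to each cusp so the conclusion of Lemma~\ref{eachcuspnontriv} holds, and yet $W$ contains no $[z_\pm^i]$. Your induction via symplectic reduction on a factor $H^1_{\rhohyp}(\Delta_i,\v)$ cannot start here, since $W\cap H^1_{\rhohyp}(\Delta_i,\v)=0$ for both $i$. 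So the Lagrangian condition, anti-symplecticity of $\phi^\ast$, and Lemma~\ref{eachcuspnontriv} together are insufficient to deliver the per-cusp alignment; some genuinely new input tying the global cohomology of $\Gamma$ to the cusp-by-cusp coordinate splitting is required, and neither your outline nor the paper's argument supplies it.
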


\begin{proof}
	Recall that $H^1(\Delta,\v^{\rhohyp})=\bigoplus_{i=1}^kH^1(\Delta_i,\v^{\rhohyp})$. We have the commutative diagram
	\begin{equation}\label{evalcd}
		\begin{tikzcd}
		H^1(\Gamma,\v^{\rhohyp})\arrow[d,"\phi^\ast"']\arrow[r,"\res_\ast"] & H^1(\Delta,\v^{\rhohyp})\arrow[d,"\phi^\ast"]\\
		H^1(\Gamma,\v^{\rhohyp})\arrow[r,"\res_\ast"] & H^1(\Delta,\v^{\rhohyp})
	\end{tikzcd}
	\end{equation}

	 It follows that the image of $H^1(\Gamma,\v^{\rhohyp})$ in $H^1(\Delta_1,\v^{\rhohyp})$ is $\phi^\ast$-invariant. By hypothesis, $H^1(\Gamma,\v^{\rhohyp})$ is $k$-dimensional and $\res_\ast$ is injective and so there is a basis $\B'=\{v_1',\ldots,v_k'\}$ for $\res_\ast(H^1(\Gamma,\v^{\rhohyp}))$ consisting of vectors from the set $\{[z_+^1],[z_-^1],\ldots,[z_+^k],[z_-^k]\}$ of $\phi^\ast$-eigenvectors of $H^1(\Delta,\v^{\rhohyp})$. Let $v_i=\res_\ast^{-1}(v_i')$. From \eqref{evalcd}, it follows that $\B=\{v_1,\ldots,v_k\}$ is an eigenbasis consisting of $\pm1$ eigenvectors and $\phi^\ast:H^1(\Gamma,\v^{\rhohyp})\to H^1(\Gamma,\v^{\rhohyp})$ is thus an involution.
	 
	  Next, suppose that for some $1\leq i\leq k$ that $[z_+^i],[z_-^i]\in \B'$, then by the pigeonhole principal there must be $j\neq i$ such that 
	 $$(\res_j)_\ast:H^1(\Gamma,\v^{\rhohyp})\to H^1(\Delta_j,\v^{\rhohyp})$$
	  is trivial, however, this contradicts Lemma \ref{eachcuspnontriv}. Thus by renumbering the elements of $\B$ we can ensure that $\res_\ast(v_i)=[z_{e(i)}^i]$

\end{proof}

The next Lemma gives a sufficient condition for a representation in $\Sc$ to be the holonomy of a type 0 or type 1 cusp. This criteria will be used in the proof of Theorem \ref{achiralthm}

\begin{lemma}\label{type1criteria}
	Let $A\in \GL(2,\Z)$ be such that $A(\gamma_1)=\gamma_1^{-1}$ and $A(\gamma_2)=\gamma_2$ and let $s_0=(0,0,1/c,0,0,c)$. There is a neighborhood $V$ of $s_0$ in $S$ with the property that if $v\in V$ and $\rho_v\in \Sca$ is such that $\rho_v\circ A$ is conjugate to $\rho_v$ then $\rho_v$ is the holonomy of a type 0 or type 1 generalized cusp. 	
\end{lemma}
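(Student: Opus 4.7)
The plan is to translate the conjugacy hypothesis $\rho_v\circ A\sim\rho_v$ into explicit algebraic constraints on the coordinates $(a,b,x_1,y_1,x_2,y_2)$ of $v$, and to show that for $v$ in a sufficiently small neighborhood of $s_0$ these constraints force either $a=0$ or $b=0$, ruling out a type 2 cusp. The hypothesis provides some $g\in G$ with $g\rho_v(\gamma_1)g^{-1}=\rho_v(\gamma_1)^{-1}$ and $g\rho_v(\gamma_2)g^{-1}=\rho_v(\gamma_2)$; both conditions will be used.

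First, I would compare the Jordan forms of $\rho_v(\gamma_1)=\exp(m(x_1,y_1))$ and its inverse. The matrix has diagonal $(1,e^{ax_1},e^{by_1},1)$, and matching Jordan data in $G$ leaves exactly two possibilities: (A) $ax_1=by_1=0$, or (B) $ax_1+by_1=0$ with $ax_1\neq0$. (The apparent third case $ax_1=by_1\neq0$ is ruled out because in that case the unique Jordan block of size $2$ sits at the eigenvalue $1$ for $\rho_v(\gamma_1)$ and at a different eigenvalue for $\rho_v(\gamma_1)^{-1}$ after the $\SLpm$ normalization, so the Jordan forms cannot match.) After shrinking $V$ so that $x_1\neq0$ on $V$, Case A collapses to $a=0$ together with $by_1=0$; hence $\rho_v$ has image in $A_{0,0}$ (type 0, when $b=0$) or in $A_{0,b}$ with $y_1=0$, $b\neq0$ (type 1, via Theorem~\ref{sliceisgencuspholonomy}).

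The main obstacle is ruling out Case B. Here $a,b,y_1\neq0$ and $\rho_v(\gamma_1)$ has Jordan form $J_2(1)\oplus e^{ax_1}\oplus e^{-ax_1}$, the eigenvalue $1$ carrying a Jordan block of size $2$ while the one-dimensional eigenlines $V_+,V_-$ for $e^{\pm ax_1}$ are spanned by $(1/a,1,0,0)^{\top}$ and $(1/b,0,1,0)^{\top}$. A direct computation from the explicit formula for $\exp(m(x_2,y_2))$ shows that $\rho_v(\gamma_2)$ preserves $V_{\pm}$ and acts on them by the scalars $e^{ax_2}$ and $e^{by_2}$ respectively. Since $g$ must interchange $V_+$ and $V_-$ (inversion swaps the eigenvalues $e^{\pm ax_1}$) while also commuting with $\rho_v(\gamma_2)$, the two scalars have to coincide, yielding $ax_2=by_2$.

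Finally, the pair of relations $ax_1+by_1=0$ and $ax_2-by_2=0$ form a homogeneous linear system in $(a,b)$ with determinant $-(x_1y_2+y_1x_2)$. A nontrivial solution forces $x_1y_2+y_1x_2=0$, which combined with the defining relation $y_1x_2-x_1y_2=\pm1$ of $S$ gives $2y_1x_2=\pm1$, so $|y_1x_2|=\tfrac{1}{2}$. But $y_1x_2=0$ at $s_0$, so this is incompatible with $v$ lying in a sufficiently small neighborhood $V$ of $s_0$. Case B is therefore excluded, and only Case A can occur on $V$, which gives the desired conclusion.
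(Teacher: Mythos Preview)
Your proof is correct and follows essentially the same strategy as the paper's: split into the two cases $ax_1=by_1=0$ (Case A) and $ax_1+by_1=0$ with $ax_1\neq0$ (Case B), use the $\gamma_2$-commutation in Case B to obtain $ax_2=by_2$, and then rule out Case B near $s_0$ via the condition $x_1y_2+y_1x_2\neq0$.

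Two remarks. First, your eigenline argument for deriving $ax_2=by_2$ in Case B is exactly the justification the paper's proof leaves implicit; the paper simply asserts the pair of equations \eqref{eq2} without explaining why the second one holds, so your version is actually more complete here. Second, your parenthetical about a ``third case'' $ax_1=by_1\neq0$ is unnecessary and slightly muddled: since the lemma is stated for $\rho_v\in\Sca\subset\hom(\Z^2,G_a)$, you can compare eigenvalue multisets directly in $G_a$, where $\{1,1,e^{c},e^{c}\}\neq\{1,1,e^{-c},e^{-c}\}$ for $c\neq0$, so this case is already excluded by eigenvalues alone without invoking Jordan blocks or the $\SLpm$ normalization. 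This does not affect the validity of your argument.
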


\begin{proof}
Let $s=(a,b,x_1,y_1,x_2,y_2)\in S$ and suppose that $\rho_s\circ A$ is conjugate to $\rho_s$. Let $\{ax_1,by_1\}$ and $\{ax_2,by_2\}$ be the set of two eigenvalues of largest modulus of $m^1_s$ and $m_2^s$, respectively.  Since $\rho_s$ is conjugate to $\rho_s\circ A$ we find that either 
 \begin{gather}
 	ax_1=-ax_1 {\rm\ and\ } by_1=-by_1 {\rm\ or}\label{eq1}\\
 	-ax_1=by_1 {\rm\ and\ } ax_2=by_2\label{eq2}
 \end{gather}
Since $s\in S$, equations \eqref{eq1} imply that either $a$ or $b$ is zero. We can choose $V$ such that $x_1\neq 0$ for all $s\in V$. In this case equations \eqref{eq2} imply that $b(x_1y_2+y_1x_2)=0$. By further shrinking $V$ we can assume that $x_1y_2+y_1x_2\neq 0$, and thus \eqref{eq2} implies that $b=0$. Thus in either case we see that for $s\in V$ that $\rho_s$ is the holonomy of either a type 0 or type 1 generalized cusp. 
   
\end{proof}

We can now prove Theorem \ref{achiralthm}.

\begin{proof}[Proof of Theorem \ref{achiralthm}]
	Before proceeding with the details, we describe the idea behind the proof. Let $\mathcal{F}$ be the $k$-dimensional family of representations produced in Theorem \ref{repdef}, and recall that $[\mathcal{F}]$ is the image of $\mathcal{F}$ in $\rep(\Gamma,G)$. We begin by showing that near $[\rhohyp]$, elements $[\rho_u]\in [\mathcal{F}]$  are determined by the eigenvalues of $\rho_u(\gamma_+^i)$. By construction, $\rho_u\circ \phi_\ast(\gamma_+^i)$ is conjugate to $\rho_u(\gamma_+^i)$ for $1\leq i\leq k$ and therefore $\rho_u\circ \phi_\ast$ is conjugate to $\rho_u$. Restricting $\rho_u$ to each cusp and applying Lemma \ref{type1criteria} gives the desired result. We now provide the details of this argument. 
	
	The symmetry $\phi$ acts on $\widetilde \hom(\Gamma,G)$ by $\phi\cdot(\rho,g_2,\ldots,g_k)=(\rho\circ \phi_\ast,g_2,\ldots,g_k)$ and on $\hom(\Delta,G)$ by $\phi\cdot(\rho_1,\ldots, \rho_k)= (\rho_1\circ \phi_\ast,\ldots, \rho_k\circ \phi_\ast)$. A simple computation shows that $\widetilde \res $ is equivariant with respect to these actions.  Another simple computation shows that $\Sigma$ is $\phi$-invariant. Combining these facts we find that $\mathcal{F}\subset \hom(\Gamma,G)$ is also $\phi$-invariant. Moreover, the action of $\phi$ descends to $\rep(\Gamma,G)$ and the above computation shows that $[\mathcal{F}]$ is also $\phi$-invariant. Furthermore, by Mostow rigidity $\phi\cdot [\rhohyp]=[\rhohyp]$. 
	
	 By hypothesis, $\phi^\ast:H^1(\Gamma,\v^{\rhohyp})\to H^1(\Gamma,\v^{\rhohyp})$ is the identity. Combining this with Lemma \ref{phiaction} shows that $\res_\ast(H^1(\Gamma,\v^{\rhohyp}))$ is spanned by $\{[z_+^1],\ldots,[z_+^k]\}$. Let $U\ni 0$ be the neighborhood in $V$ used to define $\mathcal{F}$ in Theorem \ref{repdef} and let $(t_1,\ldots t_k)$ be coordinates on $U$ such that if $u=(t_1,\ldots,t_k)$ then up to conjugacy,
	 
	 \begin{gather}\rho_u(\gamma_-^i)=\exp\begin{pmatrix}\label{mat-i}
	 	O(\abs{u}^2) & \ast & \ast & \ast\\
	 	0 & O(\abs{u}^2) & 0 & \ast\\
	 	0 & 0 & O(\abs{u}^2) & \ast\\
	 	0 & 0 & 0 & O(\abs{u}^2)
	 \end{pmatrix},\\
	 \rho_u(\gamma_+^i)=\exp\begin{pmatrix}\label{mat+i}
	 	-t_i+O(\abs{u}^2) & \ast & \ast & \ast\\
	 	0 & -t_i+O(\abs{u}^2) & 0 & \ast\\
	 	0 & 0 & 3t_i+O(\abs{u}^2) & \ast\\
	 	0 & 0 & 0 & -t_i+O(\abs{u}^2)
	 \end{pmatrix}
	 \end{gather} 
	 
	 In other words the partial derivative of $\rho_u$ with respect to  $t_i$ at $\rhohyp$, when projected to  $H^1(\Delta_i,\v^{\rhohyp})$, is $[z_+^i]$. The matrix $\rho_u(\gamma_+^i)$ has a unique simple eigenvalue when $t_i\neq 0$ which we denote $v_i$. From \eqref{mat+i} it follows that $v_i=\exp(3t_i+O(\abs{u}^2))=1+3t_i+O(\abs{u}^2)$. Thus by the inverse function theorem the map $(v_1,\ldots,v_k)\mapsto (t_1,\ldots,t_k)$ gives a diffeomorphism from a neighborhood of $(1,\ldots,1)\in \R^k$  to a neighborhood $\tilde U$ of $[\rhohyp]$ in $[\mathcal{F}]$. 
	 
	   Let $[\rho_u]\in \tilde U \subset [\mathcal{F}]$, and let $\rho_u\in \mathcal{F}$ be a representative of this conjugacy class. By construction, $\phi_\ast(\gamma_+^i)=\gamma_+^i$, for $1\leq i\leq k$. It follows that $\rho_u\circ\phi_\ast(\gamma_+^i)$ is conjugate to $\rho_u(\gamma_+^i)$. Since $\tilde U$ is parameterized by $v_i$ for $1\leq i\leq k$ it follows that $\rho_u\circ \phi_\ast$ is conjugate to $\rho_u$. By applying Lemmas \ref{pureimagcuspshape} and \ref{type1criteria} we see that (after possibly shrinking $\tilde U$) $\res_i(\rho_u)$ is the holonomy of a type 0 or type 1 cusp for each $[\rho_u]\in \tilde U$.

	 Finally, by Lemma \ref{eachcuspnontriv} there is $[w]\in H^1(\Gamma,\v^{\rhohyp})$ whose restriction to each cusp is non-trivial and by Lemma \ref{translemma} there is $[z]\in V$ such that $\pi_{\v}\circ \res_\ast([z])=[w]$ . If we let $\rho_t$ be a path through $\rhohyp$ in $\mathcal{F}$ tangent to $z$ then Theorem \ref{slicecoordtyperelationship} implies that these representations will be the holonomies of properly convex structures on $M$ with type 1 cusps.

\end{proof}

\subsection{Calculating slice coordinates}
 
 In order to apply Theorem \ref{slicecoordtyperelationship} it is necessary to be able to calculate the slice coordinates, or at least decide when they are non-zero. We close this section with a discussion about calculating slice coordinates using more easily accessible data. Recall that the Lie algebra $\g$ admits a \emph{Killing form} $B:\g\otimes \g\to \R$ be the given by $a\otimes b\mapsto 4\tr(ab)$.  The Killing form is easily seen to be invariant under the adjoint action of $G$ on $\g$. If $\Pi$ is the fundamental group of a closed $n$-manifold and $\rho:\Pi\to G$ is a representation, the Killing form gives rise to the \emph{Poincar\'e duality pairing}
 \begin{equation}\label{poincareduality}
 	H^p(\Pi,\g^{\rho})\otimes H^{n-p}(\Pi,\g^{\rho})\stackrel{\cup}{\to} H^n(\Pi,\g^{\rho}\otimes \g^{\rho})\stackrel{B}{\to}H^n(\Pi,\R)\cong \R
 \end{equation}
 
 It is easy to check that the pairing in \eqref{poincareduality} respects the splitting $\g\cong \so(3,1)\oplus \v$ and so we get
 
 \begin{equation}\label{poincaredualityv}
 	H^p(\Pi,\v^{\rho})\otimes H^{n-p}(\Pi,\v^{\rho})\stackrel{\cup}{\to} H^n(\Pi,\v^\rho\otimes \v^{\rho})\stackrel{B}{\to}H^n(\Pi,\R)\cong \R
 	\end{equation}
 	
 	In both cases, the Poincar\'e duality pairing is non-degenerate. We will only have occasion to use this pairing in the simple setting where $n=1$, in which case the construction can be made quite explicit. Specifically, let $\Pi\cong \Z$ will be generated by the homotopy class $\gamma$ of a closed loop in $\partial M$. In this case $H^0(\Pi,\v^{\rho})$ can be identified with the $\rho(\gamma)$-invariant elements of $\v$, which we henceforth denote $\v^{\rho(\gamma)}$. If $[w]\in H^1(\Pi,\v^{\rho})$ and $a\in \v^{\rho(\gamma)}$ then $\langle [w],a\rangle=4\tr (w(\gamma)a)$. We will now use these pairings to calculate slice coordinates. 
 
 Once again, for simplicity, assume that $M$ has a single cusp and that $\{\gamma_1,\gamma_2\}$ is a generating set for $\Delta$. By conjugating we can assume that there are $(u,v)\in \R^2$ with $v> 0$ such that 
 $$\rhohyp(\gamma_1)=\begin{pmatrix}
 	1 & 1 & 0 & 1/2\\
 	0 & 1 & 0 & 1\\
 	0 & 0 & 1 & 0\\
 	0 & 0 & 0 & 1
 \end{pmatrix},\ \rhohyp(\gamma_2)=\begin{pmatrix}
 	1 & u & v & \frac{u^2+v^2}{2}\\
 	0 & 1 & 0 & u\\
 	0 & 0 & 1 & v\\
 	0 & 0 & 0 & 1
 \end{pmatrix}$$
 
 In this setting the complex number $u+iv$ is the cusp shape of the $\partial_1$ with respect to the generating set $\{\gamma_1,\gamma_2\}$. Let $[z]\in H^1(\Gamma,\v^{\rhohyp})$ and assume without loss of generality that $\res_\ast[z]=c_{\gamma_1}[z_{\gamma_1}]+c_{\gamma_2}[z_{\gamma_2}]$. From \eqref{alphamat} and \eqref{betamat} it follows that 
 $$z(\gamma_1)=\begin{pmatrix}
 	-c_{\gamma_1} & \ast &\ast &\ast \\
 	 & 3c_{\gamma_1} & & \ast\\
 	  & & -c_{\gamma_1} & \ast\\
 	   &  & & -c_{\gamma_1}
 \end{pmatrix},\ z(\gamma_2)=\begin{pmatrix}
 	-c_{\gamma_1}u-c_{\gamma_2}v &\ast  &\ast &\ast \\
 	& 3c_{\gamma_1}u-c_{\gamma_2}v & &\ast \\
 	& & 3c_{\gamma_2}v-c_{\gamma_1}u &\ast \\
 	& & & -c_{\gamma_1}u-c_{\gamma_2}v
 \end{pmatrix}$$
 
 Next, let
 
 $$ 
 \delta_{u,v}=\begin{pmatrix}
 	-1 & & & \\
 	 & -\frac{u^2-3v^2}{u^2+v^2} & -\frac{4uv}{u^2+v^2} & \\
 	 & -\frac{4uv}{u^2+v^2} & \frac{3u^2-v^2}{u^2+v^2} & \\
 	 & & & -1
 \end{pmatrix}
 $$
 
 It is easily checked that $\delta_{1,0}\in \v^{\rhohyp(\gamma_1)}$ and $\delta_{u,v}\in \v^{\rhohyp(\gamma_2)}$. Restricting to the subgroup generated by $\gamma_1$ (resp.\ $\gamma_2$) allows one to regard $[z]$ as an element of $H^1(\langle \gamma_1\rangle,\v^{\rhohyp})$ (resp.\ $H^1(\langle \gamma_2\rangle,\v^{\rhohyp})$), and computing pairings we find that 
 \begin{equation}\label{poincarepairings}
 	{d_1:=\langle [z],\delta_{1,0}\rangle=-16c_{\gamma_1}},\  {d_2:=\langle [z],\delta_{u,v}\rangle=-16\frac{c_{\gamma_1}u(u^2-3v^2)+c_{\gamma_2} v(v^2-3u^2)}{u^2+v^2}}
 \end{equation}
 
 In other words, there is a linear relationship between the slice coordinates and the parings $d_1$ and $d_2$ and this linear relationship is encoded by the matrix

 $$M(u,v)=-16\begin{pmatrix}
 	1 & 0\\
 	\frac{u(u^2-3v^2)}{u^2+v^2} & \frac{v(v^2-3u^2)}{u^2+v^2}
 \end{pmatrix},
$$

  By extending this discussion to the multiple cusp setting we can define the pairings $d_1^j$ and $d_2^j$ and the matrix $M(u_j,v_j)$, for $1\leq j\leq k$ and to arrive at the following Proposition
 
 \begin{proposition}\label{coefcalc}
 	Suppose that $M$ has $k$ cusps and is infinitesimally rigid rel.\ $\partial M$. Let $w_j=u_j+iv_j$ be the cusp shape of the $j$th cusp of $M$ with respect to $\{\gamma_1^j,\gamma_2^j\}$, and let $\mathcal{M}$ be the block matrix given by 

\begin{equation}\label{coefmat}
	\mathcal{M}=\begin{pmatrix}
	M(u_1,v_1) & & \\
	 & \ddots & \\
	  & & M(u_k,v_k)
\end{pmatrix}
\end{equation}
Let $\vec c=(c_{\gamma_1^1},c_{\gamma_2^1},\ldots, c_{\gamma_1^k},c_{\gamma_2^k})$ and let $\vec{d}=(d_1^1,d_2^1,\ldots,d_1^k,d_2^k)$. If $\arg(w_j)\notin \frac{\pi}{3}\Z$ for each $1\leq j\leq k$ then $M$ is invertible and 
$$\mathcal{M}^{-1}\vec d=\vec c$$

 \end{proposition}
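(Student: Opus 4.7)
The plan is to reduce the multi-cusp statement to $k$ independent single-cusp computations, using the direct sum decomposition $H^1_{\rhohyp}(\Delta,\v)=\bigoplus_{j=1}^k H^1_{\rhohyp}(\Delta_j,\v)$. Since the Killing form pairing \eqref{poincaredualityv} is defined locally on each peripheral subgroup and the slice basis vectors $[z_{\gamma_1^j}],[z_{\gamma_2^j}]$ have support on the $j$th summand, both the slice coordinate vector $\vec c$ and the pairing vector $\vec d$ split cleanly across cusps. This forces the full transition matrix to be block diagonal with the blocks $M(u_j,v_j)$ exactly as in \eqref{coefmat}, reducing everything to verifying the single-cusp relation \eqref{poincarepairings} for each $j$ and checking invertibility of each block.

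For the single-cusp computation on $\Delta_j$, first conjugate $\rhohyp$ (independently on each cusp is fine, since we may work inside $H^1_{\rhohyp}(\Delta_j,\v)$ after applying an $\Ad$-twist) so that $\rhohyp|_{\Delta_j}$ is in the standard unipotent form displayed just before \eqref{poincarepairings}, with cusp shape $w_j=u_j+iv_j$. Taking $s=(0,0,1,0,u_j,v_j)\in \Scusp$ and plugging into \eqref{alphamat} and \eqref{betamat} produces the explicit diagonal matrices for $(c_{\gamma_1^j}z_{\gamma_1^j}+c_{\gamma_2^j}z_{\gamma_2^j})(\gamma_i^j)$ shown in the paragraph preceding \eqref{poincarepairings}. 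A direct computation verifies that $\delta_{1,0}\in \v^{\rhohyp(\gamma_1^j)}$ and $\delta_{u_j,v_j}\in \v^{\rhohyp(\gamma_2^j)}$, so these are legitimate representatives of classes in the relevant $0$-cohomology. Evaluating the trace pairings $4\tr(z(\gamma_1^j)\delta_{1,0})$ and $4\tr(z(\gamma_2^j)\delta_{u_j,v_j})$ on these diagonal matrices yields precisely the two scalar identities in \eqref{poincarepairings}, which is the matrix equation $M(u_j,v_j)(c_{\gamma_1^j},c_{\gamma_2^j})^t=(d_1^j,d_2^j)^t$.

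For invertibility, compute directly
\[
\det M(u_j,v_j)=(-16)^2\cdot\frac{v_j(v_j^2-3u_j^2)}{u_j^2+v_j^2}.
\]
Writing $w_j=r_je^{i\theta_j}$ with $\theta_j\in(0,\pi)$ (since $v_j>0$), the identity $4\sin^3\theta-3\sin\theta=-\sin(3\theta)$ gives $v_j(v_j^2-3u_j^2)=-r_j^3\sin(3\theta_j)$, hence $\det M(u_j,v_j)=-256\,r_j\sin(3\theta_j)$. This vanishes exactly when $3\theta_j\in\pi\Z$, i.e.\ $\arg(w_j)\in\frac{\pi}{3}\Z$. Under the stated hypothesis every block is invertible, so $M$ is invertible and $M^{-1}\vec d=\vec c$ follows.

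The computation is essentially routine once the decomposition is in place; the only real bookkeeping obstacle is ensuring the identification of $\pi_\v\circ\res_\ast([z])$ inside $\bigoplus_j H^1_{\rhohyp}(\Delta_j,\v)$ with its slice coordinates uses the same bases on each cusp as the pairing computation — the conjugation bringing $\rhohyp|_{\Delta_j}$ to standard form must match the one implicit in the definition of the slice basis $\{[z_{\gamma_1^j}],[z_{\gamma_2^j}]\}$. After that, the determinant calculation is the only analytic content, and the hypothesis $\arg(w_j)\notin\frac{\pi}{3}\Z$ is simply the condition that $\sin(3\theta_j)\neq 0$.
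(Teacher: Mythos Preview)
Your proof is correct and follows essentially the same approach as the paper: reduce to the block-diagonal structure, invoke the single-cusp pairing computation \eqref{poincarepairings} already established in the text, and then check invertibility of each $2\times 2$ block. The only cosmetic difference is that the paper argues invertibility by observing directly that $\det M(u_j,v_j)$ vanishes iff $v_j^2-3u_j^2=0$ and then translating $v_j/u_j=\pm\sqrt{3}$ into the argument condition, whereas you package the same computation via the triple-angle identity $v_j(v_j^2-3u_j^2)=-r_j^3\sin(3\theta_j)$; both arrive at the same conclusion.
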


 \begin{proof}
The matrix $\mathcal{M}$ is invertible iff $M(u_j,v_j)$ is invertible for each $1\leq j\leq k$. By examining determinants, it follows that $M(u_j,v_j)$ is singular if and only if $v_j^2-3u_j^2=0$. The equation $v_j^2-3u_j^2=0$ is satisfied iff $v_j=\pm\sqrt{3}u_j$ iff $\frac{v_j}{u_j}=\pm\sqrt{3}$. Since $\tan(\arg(u_j+iv_j))=\frac{v_j}{u_j}$ it follows that $M$ is singular if and only if $\arg(u_j+iv_j)\in \frac{\pi}{3}\Z$, thus by hypothesis, $\mathcal{M}$ is invertible. 

By the discussion of the previous paragraph, $\mathcal{M}\vec c=\vec d$, and the result follows.  
 \end{proof}

 \begin{remark}\label{cuspshape}
 By changing generating set for $\Delta_i$, it is always possible to ensure that no cusps shape has argument that is an integral multiple of $\pi/3$. 	
 \end{remark}

\section{Examples}\label{examples}

This section is dedicated to producing explicit examples of 1-cusped manifolds where Theorem \ref{mainthm} produces both type 1 and type 2 cusps. Specifically in Section \ref{5_2section}, we show that if $M=S^3\backslash K_{5_2}$ where $K_{5_2}$ is the $5_2$ knot (see Figure \ref{5_2}), then $M$ admits a family of convex projective structures where the cusp is a type 2 generalized cusp. Then, in Section \ref{6_3section} we show that if $M=S^3\bs K_{6_3}$, where $K_{6_3}$ is the $6_3$ knot (see Figure \ref{6_3}) then $M$ admits a convex projective structure where the cusp is a type 1 generalized cusp.

 \subsection{The $5_2$ knot complement}\label{5_2section}

\begin{center}
	\begin{figure}
	\includegraphics[scale=.25]{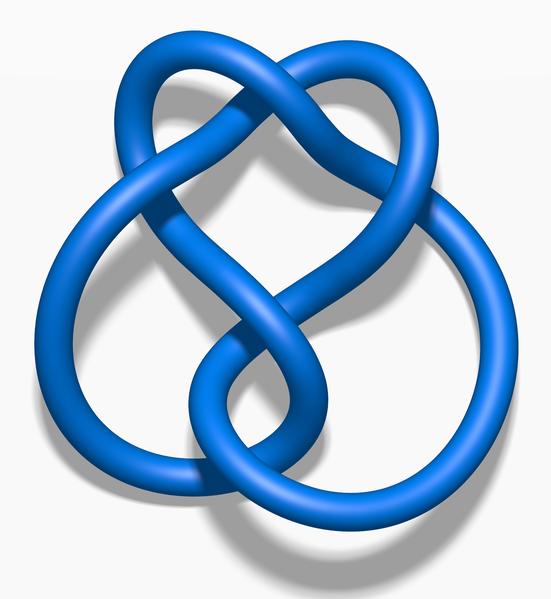}	
	\caption{\label{5_2}The $5_2$ knot}
	\end{figure}

\end{center}

Before proceeding we mention that there are other recent examples of manifolds admitting type 2 cusps due to Martin Bobb \cite{Bobb}, however his examples involve a version of bending for arithmetic manifolds. By work of Reid \cite{Reid}, the figure-eight knot is the only arithmetic knot complement, and so we see that the examples covered in this section are non-arithmetic and hence not covered by Bobb's work. 

Let $M_1=S^3\bs K_1$ where $K_1$ is the $5_2$ knot, let $\Gamma_1=\pi_1M_1$, let $\rhohyp:\Gamma_1\to G$ be the holonomy of the complete hyperbolic structure on $M_1$, and let $\Delta\cong \Z^2$ be a peripheral subgroup of $\pi_1(M_1)$. In order to apply Theorem \ref{mainthm} we first need to check that $M_1$ is infinitesimally rigid rel.\ $\partial M_1$. 

\begin{proposition}\label{5_2rigid}
	$H^1(\Gamma_1,\v^{\rhohyp})$ is 1-dimensional. In particular, $M_1$ is infinitesimally rigid rel.\ $\partial M_1$. 
\end{proposition}

\begin{proof}
	The proof is computational and consists of computing the rank of a certain matrix with entries in a number field.  This computation has been implemented in Sage \cite{sage} and can be found along with a detailed explanation in the Sage notebook {\tt 5\_2rigid.ipynb}  that can be found at \cite{comps}. We now outline some of the relevant details.
	
	Let $\Gamma_1=\pi_1M_1$, then 
	$$\Gamma_1=\langle x,y\mid xwy^{-1}w^{-1}=1\rangle,$$
	where $w=yxy^{-1}x^{-1}yx$
	
	Let $r=xwy^{-1}w^{-1}$, then there is an $\R$-linear map $\v\times\v\to \v$ given by $(a,b)\mapsto \frac{\partial r}{\partial x}\cdot a+\frac{\partial r}{\partial y}\cdot b$, where $\frac{\partial r}{\partial x}$ and $\frac{\partial r}{\partial y}$ are Fox derivatives and the action of $\Z[\Gamma_1]$ on $\v$ is given by composing $\rhohyp$ with the adjoint action of $\SO(3,1)$ on $\v$. The kernel of this map is naturally isomorphic to the space $Z^1(\Gamma_1,\v^{\rhohyp})$ of 1-cocycles. In {\tt 5\_2rigid.ipynb} the rank of this map is computed to be 8. Since $\dim(\v^{\rhohyp})=9$  this implies that $Z^1(\Gamma_1,\v^{\rhohyp})$ is 10-dimensional.
	
	 The representation $\rhohyp$ is well known to be irreducible, which implies that $B^1(\Gamma_1,\v^{\rho})\cong \v$. This space of coboundaries thus has dimension 9, and hence $H^1(\Gamma_1,\v^{\rhohyp})$ is 1-dimensional. Furthermore, by Lemma \ref{vcohomology}, the image of in $H^1(\Delta,\v^{\rhohyp})$ of $H^1(\Gamma_1,\v^{\rhohyp})$ under the map $\res_\ast$ has dimension 1, and thus $\res_\ast:H^1(\Gamma_1,\v^{\rhohyp})\to H^1(\Delta,\v^{\rhohyp})$ is an injection. 
\end{proof}

Let $\gamma_1$ and $\gamma_2$ be the meridian and homologically determined longitude of $5_2$, then it is easily checked (in SnapPy, for instance) that if $z$ is the cusp shape of $M_1$ with respect to this generating set then $z$ is the unique complex root with positive imaginary part of the polynomial $56-4t+2t^2+t^3$. It is again easily checked that argument of this root is not an integral multiple of $\pi/3$, and it follows that we can use Proposition \ref{coefcalc} to calculate the slice coordinates of  $[z]\in H^1(M_1,\v^{\rhohyp})$. 

\begin{lemma}\label{52cohom}
If $[z]$ is a generator of $H^1(M_1,\v^{\rhohyp})$ and $[z]=c_a[D_a]+c_b[D_b]$ then $c_a,c_b\neq 0$. 	
\end{lemma}

\begin{proof}
	The proof is again a computation that involves calculating the matrix $\mathcal{M}$ from \eqref{coefmat} and the pairings $d_1$ and $d_2$ in \eqref{poincarepairings}. This calculation is also implemented in the sage notebook {\tt 5\_2rigid.ipynb} where it is shown that $c_a$ and $c_b$ are both non-zero. 
\end{proof}

Combining these results we are able to prove the following:

\begin{theorem}\label{52type2defs}
	The manifold $M_1$ admits a properly convex projective structure whose end is a type 2 generalized cusp. 
\end{theorem}

\begin{proof}
	By Lemma \ref{52cohom}, the generator $[z]$ of $H^1(\Gamma_1,\v^{\rhohyp})$ can be written as $[z]=c_a[D_a]+c_b[D_b]$, where $c_a,c_b\neq 0$. The result then follows by applying Theorem \ref{slicecoordtyperelationship}.
	\end{proof}

\subsection{The $6_3$ knot complement}\label{6_3section}
\begin{center}
	\begin{figure}
	\includegraphics[scale=.25]{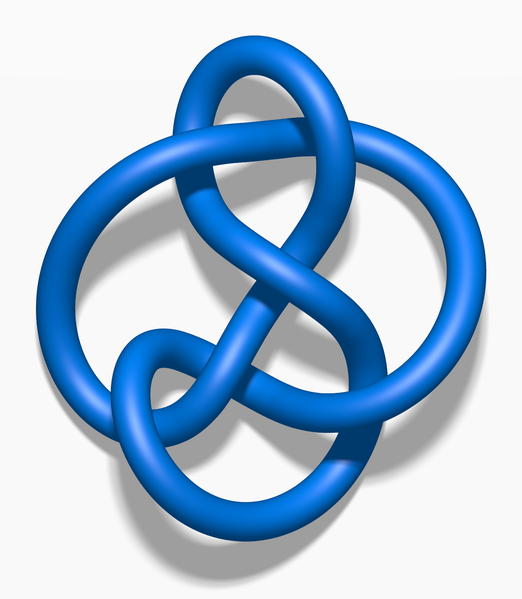}	
	\caption{\label{6_3}The $6_3$ knot}
	\end{figure}
\end{center}

For this section let $M_2=S^3\bs K_2$ where $K_2$ is the $6_3$ knot, let $\Gamma_2=\pi_1M_2$, let $\rhohyp:\Gamma_2\to \SL(4,\R)$ be the holonomy of the complete hyperbolic structure on $M_2$, and let $\Delta\cong \Z^2$ be a peripheral subgroup of $\pi_1(M_2)$.

\begin{proposition}\label{6_3rigid}
	$H^1(\Gamma_2,\v^{\rhohyp})$ is 1-dimensional. In particular, $M_2$ is infinitesimally rigid rel.\ $\partial M_2$. 
\end{proposition}

\begin{proof}
	The proof is essentially the same as that of Proposition \ref{5_2rigid}. The details of the computation can be found in the sage notebook {\tt 6\_3rigid.ipynb} which can be found at \cite{comps}. 
\end{proof}

Using the above result we can prove the following:

\begin{theorem}\label{63type1}
	The manifold $M_2$ admits a properly convex projective structure whose end is a type 1 generalized cusp. 
\end{theorem}

\begin{proof}
	By Proposition \ref{6_3rigid} $M_2$ is infinitesimally rigid rel.\ $\partial M_2$. The knot $6_3$ is a two-bridge knot. It is well known that two-bridge knots are parameterized by a rational number $p/q$, with $p$ odd and that a two-bridge knot is amphicheiral if and only if $p^2=-1\pmod q$. The rational number for the $6_3$ knot is $5/13$, and it is thus amphicheiral.  As a result, $M_2$ admits a symmetry that preserves the homologically determined longitude, $\gamma_+$, and sends the meridian, $\gamma_-$, to its inverse. Furthermore, by the computations in {\tt 6\_3rigid.ipynb} the map $\res_\ast:H^1(\Gamma_2,\v^{\rhohyp})\to H^1(\gamma_+,\v^{\rhohyp})$ is non-trivial. The result then follows by applying Corollary \ref{achiralcor}.
	\end{proof}
	
\begin{remark}\label{fig8comment}
	In \cite{BalFig8}, the author shows, using different methods, that if $K$ is the figure-eight knot then $M=S^3\bs K$ admits a properly convex projective structure with type 1 cusps. However, the figure-eight knot satisfies the hypothesis of Corollary \ref{achiralcor} and so these structures could also be constructed by the methods in this paper. 
\end{remark}
	
%\input{mathematica/5_2rigid.tex}
%\appendix
%\section{Details}

%\printbibliography
\bibliographystyle{plain}
If \bibliography{bibliography}

\end{document}